\documentclass[AmsartStyle]{arxram-aif}
\usepackage{pictex}

\newcommand\geo[1]{\overline{\rule[0pt]{0pt}{7pt} #1}}
\equalenv{pro}{prop}
\equalenv{lem}{lemm}
\equalenv{cor}{coro}



\equalenv{dfn}{defi}
\equalenv{exa}{exam}
\equalenv{rmk}{rema}
\equalenv{rmks}{remas}

\begin{document}
\title[Harmonic functions on treebolic space]{Brownian motion on treebolic space:\\
positive harmonic functions}
\alttitle{Mouvement Brownien sur l'espace arbolique: fonctions harmoniques positives}

\author[A. Bendikov]{\firstname{Alexander} \lastname{Bendikov}}
\address{Insitute of Mathematics, Wroclaw University\\
Pl. Grundwaldzki 2/4, 50-384 Wroclaw, Poland}
\email{bendikov@math.uni.wroc.pl}

\author[L. Saloff-Coste]{\firstname{Laurent} \lastname{Saloff-Coste}}
\address{Department of Mathematics, Cornell University\\ 
Ithaca, NY 14853, USA}
\email{lsc@paris.math.cornell.edu}

\author[M. Salvatori]{\firstname{Maura} \lastname{Salvatori}}
\address{Dipartimento di Matematica, Universit\`a di Milano\\
Via Saldini 50, 20133 Milano, Italy}
\email{mauras@mat.unimi.it}

\author[W. Woess]{\firstname{Wolfgang} \lastname{Woess}}
\address{Institut f\"ur Diskrete Mathematik\\
Technische Universit\"at Graz\\
Steyrergasse 30, A-8010 Graz, Austria}
\email{woess@TUGraz.at}

\thanks{Partially supported by Austrian Science Fund (FWF) projects 
P24028 and W1230 and Polish NCN grant DEC-2012/05/B/ST 1/00613}

\keywords{Tree, hyperbolic plane, horocyclic product, quantum complex,
Laplacian, positive harmonic functions}

\altkeywords{Arbre, plan hyperbolique, produit horocyclique, complexe quantique,
Laplacien, fonctions harmoniques positives}

\subjclass
                 {31C05; 
                  60J50, 
                  53C23, 
                  05C05} 

\begin{abstract} This paper studies                                   
potential theory on treebolic space, that is, the horocyclic product
of a regular tree and hyperbolic upper half plane.
Relying on the analysis on strip complexes developed 
by the authors, a family of 
Laplacians with ``vertical drift'' parameters is considered.
We investigate the positive harmonic
functions associated with those Laplacians.  
\end{abstract}

\begin{altabstract}
Ce travail est dedi\'e a une etude de la th\'eorie du potentiel sur l'espace 
arbolique, i.e., le produit horcyclique d'un \^arbre r\'egulier avec
le demi plan hyperbolique superieur.  En se basant sur l'analyse sur les complexes
a bandes Riemanniennes develop\'ee par les auteurs, on consid\`ere une famille de
Laplaciens avec deux param\`etres concernant la d\'erive verticale. On examine les fonctions
harmoniques associ\'ees a ces Laplaciens.
\end{altabstract}

\maketitle

\section{Introduction}\label{sec:intro}
We recall the basic description of treebolic space. For many further
details on the geometry, metric structure and isometry group, the reader is
referred to \cite{BSSW2}. 
 
$$
\beginpicture 

\setcoordinatesystem units <1.1mm,1.1mm>

\setplotarea x from -16 to 54, y from 0 to 48

\plot 0 0  40 20  48 36  8 16  0 0  -8 16  -2 28  38 48  36.666 45.333 /

\plot 48 36  54 48  14 28  8 16  2  28  42 48   44.4 43.2 /

\plot 4.8 22.4  -8 16  -14 28  26 48  28.4  43.2 /

\endpicture
$$
\begin{center}
{\sl Figure 1.\/} A compact piece of treebolic space, with $\mathsf{p}=2$.\footnote{This 
figure also appears in \cite{BSSW1}.}
\end{center}

\medskip

We consider the homogeneous tree 
${\mathbb T}={\mathbb T}_{\mathsf{p}}\,$, drawn in such a way that each vertex $v$ has
one predecessor $v^-$ and $\mathsf{p}$ successors. 
We consider ${\mathbb T}$ as a metric graph, where each edge is a copy of the
unit interval $[0\,,\,1]$. The discrete, integer-valued graph metric 
$\mathsf{d}_{{\mathbb T}}$ on the vertex set ($0$-skeleton) $V({\mathbb T})$ of ${\mathbb T}$ has an obvious 
``linear'' extension to the entire metric graph.
We can partition the vertex set into countably many sets $H_k\,$, $k \in {\mathbb Z}$, such
that each $H_k$ is countably infinite, and every vertex $v \in H_k$ has
its predecessor $v^-$ in $H_{k-1}$ and its successors in $H_{k+1}$. We write $[v^-,v]$
for the metric edge between those two vertices, parametrised by the unit interval. For any
$w \in [v^-,v]$ we let $t = k-\mathsf{d}_{{\mathbb T}}(w,v)$ and set $\mathfrak{h}(w)=t$. In particular, 
$\mathfrak{h}(v) = k$ for $v \in H_k\,$. In general, we set $H_t = \{ w \in {\mathbb T} : \mathfrak{h}(w)=t \}$.
These sets are the \emph{horocycles.} 
We choose a root vertex $o \in H_0\,$.

Second, we consider hyperbolic upper half space ${\mathbb H}$, and draw the
horizontal lines $\mathsf{L}_k = \{ x+\mathfrak{i}\, \mathsf{q}^k : x \in \mathbb{R} \}$, thereby subdividing
${\mathbb H}$ into the strips 
$\mathsf{S}_k = \{ x+\mathfrak{i}\, y : x \in \mathbb{R}\,,\; \mathsf{q}^{k-1} \le y \le \mathsf{q}^{k} \}$, 
where $k \in {\mathbb Z}$. We sometimes write ${\mathbb H}(\mathsf{q})$ for the resulting strip complex, which
is of course hyperbolic plane in every geometric aspect.

The reader is invited to have a look at the respective figures of ${\mathbb T}_{\mathsf{p}}$ and ``sliced''
hyperbolic plane ${\mathbb H}(\mathsf{q})$ in \cite{BSSW2}. Treebolic space with parameters $\mathsf{q}$ and
$\mathsf{p}$ is then
\begin{equation}\label{eq:treebolicdef}
{\mathsf{HT}}(\mathsf{q},\mathsf{p}) = \{ \mathfrak{z} = (z,w) \in {\mathbb H} \times {\mathbb T}_{\mathsf{p}} : 
\mathfrak{h}(w) = \log_{\mathsf{q}}(\operatorname{\text{\sl \!Im}} z) \}\,.
\end{equation}

Thus, in treebolic space ${\mathsf{HT}}(\mathsf{q},\mathsf{p})$, infinitely many copies of the strips
$\mathsf{S}_k$ are glued together as follows: to each vertex $v$ of ${\mathbb T}$ there corresponds 
the \emph{bifurcation line}
$$
\mathsf{L}_v = \{ (x + \mathfrak{i}\, \mathsf{q}^k, v) : x \in  \mathbb{R} \} = \mathsf{L}_k \times \{v\}\,,
\quad \text{where} \quad \mathfrak{h}(v) = k\,.
$$
Along this line, the strips
$$
\mathsf{S}_v = \bigl\{ (x+\mathfrak{i}\, y,w) : x \in \mathbb{R}\,,\; 
y \in [\mathsf{q}^{k-1} \,,\,\mathsf{q}^{k}]\,,\; w \in [v^-,v]\,,\;
\mathfrak{h}(w) = \log_{\mathsf{q}}y \bigr\}
$$
and $\mathsf{S}_u\,$, where $u \in V({\mathbb T})$ with $u^-=v$, are glued together.
The \emph{origin} of our space is $\mathfrak{o} = (0,o)$.

\smallskip

In \cite{BSSW2}, we outlined several viewpoints why this space is interesting.
It can be considered as a concrete example of a Riemannian complex in the spirit of 
{\sc Eells and Fuglede}~\cite{EeFu}.
Treebolic space is a \emph{strip complex} (``quantum complex'')
in the sense of \cite{BSSW1}, where a careful study
of Laplace operators on such spaces is undertaken, taking into account the
serious subtleties arising from the singularites of such a complex along its
\emph{bifurcation manifolds}. In our case, the latter are the lines $\mathsf{L}_v\,$,
$v \in V({\mathbb T})$. The other interesting feature is that treebolic spaces are 
\emph{horocyclic products} of ${\mathbb H}(\mathsf{q})$ and ${\mathbb T}_{\mathsf{p}}\,$, with a structure
that shares many features with the  \emph{Diestel-Leader graphs}  $\mathsf{DL}(\mathsf{p},\mathsf{q})$
\cite{DiLe} as well as with the manifold (Lie group) $\text{\sf Sol}(\mathsf{p},\mathsf{q})$, 
the horocyclic product of two hyperbolic planes
with curvatures $-\mathsf{p}^2$ and $-\mathsf{q}^2$, respectively, where $\mathsf{p}, \mathsf{q} > 0$. 
Compare with {\sc Woess}~\cite{Wlamp}, {\sc Brofferio and Woess}~\cite{BrWo1}, 
\cite{BrWo2} and with 
{\sc Brofferio, Salvatori and Woess}~\cite{BSW}; see also
the survey by {\sc Woess}~\cite{Wo-IMN}. The graph $\mathsf{DL}(\mathsf{p},\mathsf{p})$ is a 
Cayley graph of the Lamplighter group ${\mathbb Z}_{\mathsf{p}}\wr {\mathbb Z}$. Similarly, 
the Baumslag-Solitar group
$\operatorname{\sf BS}(\mathsf{p}) = \langle a,b \mid ab = b^{\mathsf{p}}a \rangle$ is a prominent
group that acts isometrically and with compact quotient on ${\mathsf{HT}}(\mathsf{p},\mathsf{p})$. 

\smallskip

In the present paper, we take up the thread from \cite{BSSW2}, where we
have investigated the details of the metric structure, geometry and
isometries before describing the spatial asymptotic behaviour 
of Brownian motion on treebolic space. Brownian motion is induced
by a Laplace operator $\Delta_{\alpha,\beta}\,$, where the parameter
$\alpha$ is the coefficient of a vertical drift term in the interior of each
strip, while $\beta$ is responsible for (again vertical) Kirchhoff type
bifurcation conditions along the lines $\mathsf{L}_v\,$.  
The serious task of rigorously constructing  $\Delta_{\alpha,\beta}$
as an essentially self-adjoint diffusion operator was undertaken in
the general setting of strip complexes in \cite{BSSW1}. For the
specific case of ${\mathsf{HT}}$, it is explained in detail in \cite{BSSW2}.
We shall recall only its basic features, while we shall
use freely the geometric details from \cite{BSSW2}.

\smallskip

Here, we face the rather difficult issue to describe, resp. determine 
the positive harmonic functions on ${\mathsf{HT}}(\mathsf{q},\mathsf{p})$. 
In \S 2, we recall the basic 
features of treebolic space and the family of Laplacians with Kirchhoff conditions
at the bifurcation lines. In \S 3, we start with a Poisson representation
on ``rectangular'' sets which are compact.  Then we obtain such a representation
on simply connected sets which are unions of strips and a solution
of the Dirichlet problem on those sets. (For the technical details of the main results
presented here, the reader is referred to the respective sections.) For the following, 
let $\overline \Omega$ be the union of all closed strips adjactent to $\mathsf{L}_o$
and $\Omega$ its interior, and let $\mu_{\mathfrak{w}}^{\Omega}$ be the exit distribution from
$\Omega$ of Brownian motion starting at $\mathfrak{w} \in \Omega$.
\\[8pt]
\textsc{Theorems \ref{thm:poiss} \& \ref{thm:dir}.} ---
{\sl 
\emph{(I)} Let $h$ be continuous on $\overline \Omega$ and harmonic on $\Omega$, and
suppose that on that set, $h$ grows at most exponentially with respect to the metric
of ${\mathsf{HT}}$. Then for every $\mathfrak{w} \in \Omega$,
$$
h(\mathfrak{w}) = \int_{\partial \Omega} h(\mathfrak{z}) \, 
d\mu_{\mathfrak{w}}^{\Omega}(\mathfrak{z})\,.
$$
\emph{(II)} Let $f$ be continuous on $\partial \Omega$, and
suppose that on that set, $f$ grows at most exponentially with respect to the metric
of ${\mathsf{HT}}$. Then 
$$   
h(\mathfrak{w}) = 
\int_{\partial \Omega} f(\mathfrak{z}) \, d\mu_{\mathfrak{w}}^{\Omega}(\mathfrak{z})\,, 
\quad \mathfrak{w} \in \Omega 
$$    
defines an extension of $f$ that is harmonic on $\Omega$ and continuous 
on $\overline\Omega$. 
}
\\

The difficulties arise from the singularities at the bifurcation lines plus the
fact that $\Omega$ is unbounded in the horizontal direction. 
We obtain that 
the law $\mu = \mu_{\mathfrak{w}}^{\Omega}$
of the random walk induced by Brownian motion on the union $\operatorname{\sf LT}$
of all bifurcation lines has a continuous density and exponential tails.
The main result of \S 4 is as follows.
\\[8pt]
\textsc{Theorem \ref{thm:restrict-HT}.} ---
{\sl Restricting positive 
harmonic functions on ${\mathsf{HT}}$ to $\operatorname{\sf LT}$ yields a one-to-one relation with the positive 
harmonic functions of the random walk driven by $\mu$ on $\operatorname{\sf LT}$.
}
\\

This is of interest, because the disconnected subspace $\operatorname{\sf LT} \subset {\mathsf{HT}}$ 
and that random walk are invariant under the
transitive action of the isometry group of ${\mathsf{HT}}$.
In \S 5, we use Martin boundary theory to prove the following decomposition.
\\[8pt]
\textsc{Theorem \ref{thm:decompose}.} ---
{\sl Every positive harmonic function $h$ on ${\mathsf{HT}}$ has the form
$$
h(\mathfrak{z}) = h^{{\mathbb H}}(z) + h^{{\mathbb T}}(w)\,, \quad \mathfrak{z} = (z,w) \in {\mathsf{HT}}\,,
$$
where $h^{{\mathbb H}}$ is non-negative harmonic on ${\mathbb H}(\mathsf{q})$ and
$h^{{\mathbb T}}$ is non-negative harmonic on ${\mathbb T}_{\mathsf{p}}$.   
}
\\

Given the drift parameters $\alpha$ and $\beta$, there is a formula for the 
\emph{vertical drift}  $\ell(\alpha,\beta) \in \mathbb{R}$, and $|\ell(\alpha,\beta)|$ is the rate of 
escape of our Brownian motion.
\\[8pt]
\textsc{Theorem \ref{thm:Liouville-HT}.} ---
{\sl Suppose that $\mathsf{p} \ge 2$. Then the Laplacian $\Delta_{\alpha,\beta}$ 
on ${\mathsf{HT}}$ has the weak Liouville property
(all bounded harmonic functions are constant) if and only if 
$\ell(\alpha,\beta)=0$.
}
\\

Finally, in the situation when the projection of the Laplacian
on the hyperbolic plane is smooth, we can describe all minimal harmonic functions
explicitly. 

\smallskip

The techniques that we employ here can be applied to obtain
similar results on other types of strip complexes. The simplest one
is where one replaces ``sliced'' hyperbolic by Euclidean plane. Very close to the present
study is the case where one takes hyperbolic upper half space ${\mathbb H}_d$ in arbitrary 
dimension, subdivided by level sets (horospheres) of the Busemann function with respect to
the boundary point $\boldsymbol{\infty}$. Also of interest: to replace the tree by the $1$-skeleton
of a higher-dimensional $\widetilde A_d$-building, or just to take different types of level
functions on a regular tree; compare e.g. with the space considered by 
{\sc Cuno and Sava-Huss}~\cite{CS}.

\section{Laplacians on treebolic space}\label{sec:Laplacians}

We briefly recapitulate the most important features of ${\mathsf{HT}}$, in particular,
the construction of
our family of Laplace operators $\Delta_{\alpha,\beta}$ on
${\mathsf{HT}}(\mathsf{q},\mathsf{p})$ with 
parameters $\alpha \in \mathbb{R}$ and $\beta > 0$.
For more details, see \cite{BSSW2}. 
The rigorous construction is carried out in \cite{BSSW1}.

For any function $f: {\mathsf{HT}} \to \mathbb{R}$, we write $f_v$ for its restriction to $\mathsf{S}_v\,$.
For $(z,w) \in \mathsf{S}_v\,$, the element $w \in [v^-,v]$ is uniquely 
determined,
so that we can omit $w$ and write $f_v(z) = f(z,w)$: formally, $f_v$ is defined
on $\mathsf{S}_k \subset {\mathbb H}$, where $k = \mathfrak{h}(v)$. 

\begin{dfn}\label{def:Cinfty} We let  ${\mathcal C}^\infty({\mathsf{HT}})$ be the space of 
those continuous functions $f$ on ${\mathsf{HT}}$ such that, for each $v\in V({\mathbb T})$, the 
restriction $f_v$ on $\mathsf{S}_v$  has continous derivatives 
$\partial_x^m\partial_y^n f_v(z)$ of all orders 
in the interior $\mathsf{S}^o_v$ which satisfy, for all $R>0$,
$$
\sup\bigl\{|\partial_x^m\partial_y^n f_v(z)|:
|\operatorname{\text{\sl \!Re}} z| \le R\,,\; \mathsf{q}^{\mathfrak{h}(v)-1} < \operatorname{\text{\sl \!Im}} z < \mathsf{q}^{\mathfrak{h}(v)} \bigr\}
<\infty\,.
$$
\end{dfn}

Thus, on each strip $\mathsf{S}_v\,$, each partial derivative has a continuous
extension $\partial_x^m\partial_y^n f_v(z)$  
to the strip's boundary. However, except for $m=n=0$, when $w^-=v$, we do in general \emph{not} have 
that  $\partial_x^m\partial_y^n f_w = \partial_x^m\partial_y^n f_v$
on $\mathsf{L}_v = \mathsf{S}_v \cap \mathsf{S}_w\,$. The hyperbolic gradient
$$
\nabla f_v(z) = \bigl(y^2 \partial_x f_v(z)\,,\,y^2 \partial_y f_v(z)\bigr)
$$ 
is defined in the interior of each strip.
On any bifurcation line $\mathsf{L}_v\,$, we have to distinguish
between all the one-sided limits of the gradient, obtaining the family
$$
\nabla f_v(z) \quad\text{and}\quad \nabla f_w(z) \quad \text{for all}\; w \in V({\mathbb T})\;
\text{with}\; w^-=v\,,\quad (z,v) \in \mathsf{L}_v\,.  
$$ 
For any open domain $\Theta \subset {\mathsf{HT}}$, we let let ${\mathcal C}^\infty_c(\Theta)$
be the space of all functions in ${\mathcal C}^\infty({\mathsf{HT}})$ which have compact 
support contained in $\Theta\,$. 
We let
\begin{equation}\label{eq:LT}
\operatorname{\sf LT} = \bigcup_{v \in V({\mathbb T})} \mathsf{L}_v \quad\text{and}\quad {\mathsf{HT}}^o =  \bigcup_{v \in V({\mathbb T})} \mathsf{S}_v^o = 
{\mathsf{HT}} \setminus \operatorname{\sf LT}\,.
\end{equation}
The \emph{area element} of ${\mathsf{HT}}$ is $d\mathfrak{z} = y^{-2} dx\,dy$ for $\mathfrak{z}=(z,w) \in {\mathsf{HT}}^o\,$,
where $z=x+\mathfrak{i}\, y$ and $dx$, $dy$ are Lebesgue measure: this is (a copy of) the 
hyperbolic upper half plane area element. The area of the lines $\mathsf{L}_v$ is $0$. 
For $\alpha\in \mathbb{R}\,$, $\beta>0$, we define the measure $\mathbf{m}_{\alpha,\beta}$
on ${\mathsf{HT}}$ by
\begin{equation}\label{eq:mab}
\begin{aligned}
d\mathbf{m}_{\alpha,\beta}(\mathfrak{z})&= \phi_{\alpha,\beta}(\mathfrak{z})\,d\mathfrak{z}\,,
\quad\text{with}\\
\phi_{\alpha,\beta}(\mathfrak{z}) &= \beta^{\mathfrak{h}(v)}\,y^\alpha \quad \text{for}\quad
\mathfrak{z}=(x+\mathfrak{i}\, y,w)\in \mathsf{S}_v \setminus \mathsf{L}_{v^-}\,,
\end{aligned}
\end{equation}
where $v \in V({\mathbb T})$, that is, $w \in (v^-\,,v]$ and $\log_{\mathsf{q}} y = \mathfrak{h}(w)$.
\begin{dfn}\label{def:ADHT}   For $f\in {\mathcal C}^\infty({\mathsf{HT}})$
and $\mathfrak{z}=(x+\mathfrak{i}\, y,w)\in {\mathsf{HT}}^o\,$, 
$$
\Delta_{\alpha,\beta}f(\mathfrak{z})= y^{2}(\partial_x^2+\partial_y^2)f(\mathfrak{z})
+\alpha \,y\, \partial_yf(\mathfrak{z})\,.
$$
Let $\mathcal{D}^\infty_{\alpha,\beta,c}$ be the
space of all functions $f \in {\mathcal C}^\infty_c({\mathsf{HT}})$ with the following
properties. 
\\[5pt]
(i) For any $k$,  the  $k$-th iterate $\Delta_{\alpha,\beta}^k f$, originally 
defined on ${\mathsf{HT}}^o,$  admits a continuous extension to all of ${\mathsf{HT}}$ (which then
belongs to ${\mathcal C}^\infty_c({\mathsf{HT}})$ and is also denoted $\Delta_{\alpha,\beta}^k f$).
\\[5pt]
(ii) The function $f$, as well as each of its iterates $\Delta_{\alpha,\beta}^k f$,
satisfies the \emph{bifurcation conditions}
\begin{equation}\label{eq:bif}
\partial_y f_v=\beta\sum_{w\,:\,w^-=v}
\partial_y f_w \quad
\text{on $L_v$ for each $v \in V({\mathbb T})\,$.}
\end{equation}
\end{dfn}

\begin{pro}\label{pro:saHT}\cite{BSSW1}
The space $\mathcal D^\infty_{\alpha,\beta,c}$ is dense in the Hilbert space
${\mathcal L}^2({\mathsf{HT}},\mathbf{m}_{\alpha,\beta})$. 

The operator $(\Delta_{\alpha,\beta},\mathcal D^\infty_{\alpha,\beta,c})$ 
is essentially self-adjoint in ${\mathcal L}^2({\mathsf{HT}},\mathbf{m}_{\alpha,\beta})$. 
\end{pro}
We also write $\bigl(\Delta_{\alpha,\beta},\text{\rm Dom}(\Delta_{\alpha,\beta})\bigr)$ 
for its unique self-adjoint extension. Basic properties of this Laplacian
and the associated heat semigroup are derived in \cite{BSSW1}.
In particular, there is the positive, continuous, symmetric heat kernel
$\mathbf{h}_{\alpha,\beta}(t,\mathfrak{w},\mathfrak{z})$ on $(0,\infty)\times {\mathsf{HT}}\times{\mathsf{HT}}$
such that for all $f\in {\mathcal C}_c({\mathsf{HT}})$,
$$
e^{t\Delta_{\alpha,\beta}}f(\mathfrak{z})=\int_{{\mathsf{HT}}}
\mathbf{h}_{\alpha,\beta}(t,\mathfrak{w},\mathfrak{z})\,f(\mathfrak{z})\,d\mathbf{m}_{\alpha,\beta}(\mathfrak{z})\,.
$$
$\Delta_{\alpha,\beta}$ is the infinitesimal generator of our Brownian motion $(X_t)_{t \ge 0}$
on ${\mathsf{HT}}$. It has infinite life time and continuous sample paths.
For every starting point $\mathfrak{w}\in{\mathsf{HT}}$, its distribution $\mathbb P^{\alpha,\beta}_\mathfrak{w}$
on $\boldsymbol{\Omega}=\mathcal C ([0,\infty]\to {\mathsf{HT}})$ is determined by
the one-dimensional distribution
$$
\mathbb P^{\alpha,\beta}_{\mathfrak{w}}[X_t\in U]
= \int_U \mathbf{h}_{\alpha,\beta}(t,\mathfrak{w},\mathfrak{z}) \,d\mathbf{m}_{\alpha,\beta}(\mathfrak{z})
= \int_U \mathbf{p}_{\alpha,\beta}(t,\mathfrak{w},\mathfrak{z}) \,d\mathfrak{z}\,,
$$
where $U$ is any Borel subset of ${\mathsf{HT}}$ and
\begin{equation}\label{eq:pab}
\mathbf{p}_{\alpha,\beta}(t,\mathfrak{w},\mathfrak{z}) = 
\mathbf{h}_{\alpha,\beta}(t,\mathfrak{w},\mathfrak{z})\,\phi_{\alpha,\beta}(\mathfrak{z})
\end{equation}
with $\phi_{\alpha,\beta}$ as in \eqref{eq:mab}. 

\smallskip

We have the projections $\pi^{{\mathbb H}}: {\mathsf{HT}} \to {\mathbb H}\,$, $\pi^{{\mathbb T}}: {\mathsf{HT}} \to {\mathbb T}$
and $\pi^{\mathbb{R}}: {\mathsf{HT}} \to \mathbb{R}\,$, where for $\mathfrak{z} = (z,w)$, 
$$
\pi^{{\mathbb H}}(\mathfrak{z}) = z\,, \quad  \pi^{{\mathbb T}}(z,w) = w\,, \quad\text{and}\quad
\pi^{\mathbb{R}}(\mathfrak{z}) = \log_{\mathsf{q}} \operatorname{\text{\sl \!Im}}(z) = \mathfrak{h}(w)\,. 
$$
On several occasions it will be useful to write
$$
\operatorname{\text{\sl \!Re}} \mathfrak{z} = \operatorname{\text{\sl \!Re}} z \quad \text{for}\quad \mathfrak{z} = (z,w) \in {\mathsf{HT}}\,.
$$

The ``sliced'' hyperbolic ${\mathbb H}(\mathsf{q})$ plane is 
the treebolic space 
${\mathsf{HT}}(\mathsf{q},1)$: the tree is the bi-infinite line graph.
In particular, we have the operator $\Delta^{{\mathbb H}}_{\alpha,\beta}$ on ${\mathbb H}$. 
We also have a Laplacian $\Delta^{{\mathbb T}}_{\alpha,\beta}$ on the metric
tree, whose rigourous construction (in the same way as above) is considerably
simpler. Note the different parametrisation in ${\mathbb T}$, where each edge $[v^-,v]$
corresponds to the real interval $[\mathfrak{h}(v)-1\,,\mathfrak{h}(v)]$. 
We write $f_v$ for the restriction of $f: {\mathbb T} \to \mathbb{R}$ to $[v^-,v]$, which depends on
one real variable. 
The analogue of the measure of \eqref{eq:mab} is 
\begin{equation}\label{eq:mabT}
\begin{aligned}
d\mathbf{m}_{\alpha,\beta}^{{\mathbb T}}(w)&= \phi_{\alpha,\beta}^{{\mathbb T}}(w)\,dw\,,\quad\text{with}\\
\phi_{\alpha,\beta}^{{\mathbb T}}(w) &= \beta^{\mathfrak{h}(v)}\,\mathsf{q}^{(\alpha-1)\mathfrak{h}(w)}\,\log\mathsf{q}
\quad \text{for}\quad w \in (v^-,v]\,,
\end{aligned}
\end{equation}
where $v \in V({\mathbb T})$ and $dw$ is
the standard Lebesgue measure in each edge. The space ${\mathcal C}^\infty({\mathbb T})$
is as in Definition \ref{def:Cinfty}, with the edges of
${\mathbb T}$ in the place of the strips. 
Definition \ref{def:ADHT} and the bifurcation condition 
\eqref{eq:bif} are replaced by the following:
every $f \in \text{\rm Dom}(\Delta^{{\mathbb T}}_{\alpha,\beta})
\cap {\mathcal C}^\infty({\mathbb T})$ must satisfy for every $v \in V({\mathbb T})$ 
\begin{equation}\label{eq:bif-LapT}
\begin{aligned} f_v'(v)&=\beta\sum_{w\,:\,  w^- = v} f_w'(v)\quad\text{and}\quad\\
\Delta^{{\mathbb T}}_{\alpha,\beta}f 
&=\frac{1}{(\log \mathsf{q})^2} f'' +  \frac{\alpha-1}{\log \mathsf{q}} f'\quad 
\text{in the open edge}\;(v^-,v)\,.
\end{aligned}
\end{equation}
We also have this on the real line, by identifying
$\mathbb{R}$ with the metric tree with vertex set ${\mathbb Z}$ and degree 2. 
The edges are the intervals $[k-1\,,\,k]$, $k \in {\mathbb Z}$, and the Laplacian is 
$\Delta^{\mathbb{R}}_{\alpha,\beta}\,$. Its definition
as a differential operator in each open interval $(k-1\,,\,k)$ is  
as in \eqref{eq:bif-LapT}, and the bifurcation condition now is 
$f'(k-) = \beta\, f'(k+)$ for all $k \in {\mathbb Z}$. We have the following.

\begin{pro}\label{pro:projections} Let $(X_t)$ be Brownian motion on  ${\mathsf{HT}}(\mathsf{q},\mathsf{p})$ 
with infinitesimal generator $\Delta_{\alpha,\beta}\,.$\\[4pt]
\emph{(a)}  $\; Z_t =\pi^{{\mathbb H}}(X_t)$ is the Markov process on 
${\mathbb H}(\mathsf{q})$ with 
generator $\Delta^{{\mathbb H}}_{\alpha,\beta\mathsf{p}}$. 
\\[4pt]
\emph{(b)} $\;W_t=\pi^{{\mathbb T}}(X_t)$ is the Markov process on ${\mathbb T}_{\mathsf{p}}$ with 
generator $\Delta^{{\mathbb T}}_{\alpha,\beta}$. 
\\[4pt]
\emph{(c)} $\;Y_t=\pi^{\mathbb{R}}(X_t)$ is the Markov process on on $\mathbb{R}$ with 
generator $\Delta^{\mathbb{R}}_{\alpha,\beta\mathsf{p}}$. 
\end{pro}

We also need to recall further features of the geometry of ${\mathsf{HT}}$, as well as
of ${\mathbb T}$ and ${\mathbb H}$. For two points $w, w' \in {\mathbb T}$, their \emph{confluent} $w \curlywedge w'$ 
is the unique element $v$ on the geodesic path  $\geo{w\,w'}$ where $\mathfrak{h}(v)$ is minimal.
In the specific case when $w$ and $w'$ lie on a geodesic ray spanned by a sequence of vertices that
are successive predecessors, the confluent is is one of $w$ and $w'$, while otherwise it is always a 
vertex. Analogously, for two points $z, z' \in {\mathbb H}$, we let $z \wedge z'$
be the point on the hyperbolic geodesic $\geo{z\,z'}$ where the imaginary part is maximal.
Recall that hyperbolic geodesics lie on semi-circles which are orthogonal to the bottom
boundary line $\mathbb{R}$, resp. on vertical straight lines. It is a straightforward (Euclidean) exercise 
to see that 
\begin{equation}\label{eq:RE-IM}
|\operatorname{\text{\sl \!Re}} z - \operatorname{\text{\sl \!Re}} z'| \le 2 \,\operatorname{\operatorname{\text{\sl \!Im}}} z \wedge z'\,,\quad z, z' \in {\mathbb H}\,.
\end{equation}

Returning to the tree,
the \emph{boundary} $\partial T$  of ${\mathbb T}$ is its space of \emph{ends}. Each geodesic ray in ${\mathbb T}$ 
gives rise to an end at infinity, and two rays have the same end if they coincide except 
for initial pieces of finite length. In our view on ${\mathbb T}$ as in \cite[Figure 3]{BSSW2}, 
the tree has one end $\varpi$ at the bottom, and all other ends at the top of the picture,
forming the boundary part $\partial^* {\mathbb T}$. 
For any $w \in {\mathbb T}$ and $\xi \in \partial {\mathbb T}$, there is a unique geodesic 
ray $\geo{w\,\xi}$ starting at $w$ and having $\xi$ as its end. Analogously, for any two distinct 
$\xi, \eta \in \partial {\mathbb T}$, there is a unique bi-infinite geodesic $\geo{\eta\, \xi}$ such that
if we split it at any point, one part is a ray going to $\xi$ and the other a ray going to $\eta$.
If both of them belong to $\partial^* {\mathbb T}$ then we can also define $\xi \curlywedge \eta$ as the element $v$
on the geodesic where $\mathfrak{h}(v)$ is minimal; it is a vertex.
If $\xi \in \partial^* {\mathbb T}$ then the ``vertical'' geodesic $\geo{\varpi\,\xi}$ is the side view of 
a copy of the hyperbolic plane sitting in ${\mathsf{HT}}$, namely
${\mathbb H}_{\xi} = \{(z,w) \in {\mathsf{HT}} : w \in \geo{\varpi\,\xi}\}$.
  
The metric $\mathsf{d}_{{\mathsf{HT}}}$ of ${\mathsf{HT}}$ is induced by the 
hyperbolic arc length inside each strip: let $\mathfrak{z}_1=(z_1,w_1) , \mathfrak{z}_2=(z_2,w_2) \in {\mathsf{HT}}$. 
Let $v=w_1 \curlywedge w_2\,$. Then 
\begin{equation}\label{eq:metric}
\mathsf{d}_{{\mathsf{HT}}}\bigl(\mathfrak{z}_1,\mathfrak{z}_2) = 
\begin{cases} \mathsf{d}_{{\mathbb H}}(z_1,z_2)\,,
           \hspace*{2.5cm} \text{if}\; v \in \{ w_1, w_2 \}\,\\
  \min \{ \mathsf{d}_{{\mathbb H}}(z_1,z)+\mathsf{d}_{{\mathbb H}}(z,z_2) : z \in L_{\mathfrak{h}(v)} \}\,,& \\ \hspace*{4.2cm} \text{if}\; 
        v \notin \{ w_1, w_2 \}\,.
\end{cases}
\end{equation}
In the first case, $\mathfrak{z}_1$ and $\mathfrak{z}_2$ belong to a common copy 
${\mathbb H}_{\xi}$ of ${\mathbb H}$ in ${\mathsf{HT}}$. In the second case, $v \in V({\mathbb T})$, and there are 
$\xi_1, \xi_2 \in \partial^*{\mathbb T}$  such that $\xi_1 \curlywedge \xi_2 =v$ and both 
$\mathfrak{z}_i$ lie above $\mathsf{L}_v$ in ${\mathbb H}_{\xi_i}\,$, 
whence any geodesic from $\mathfrak{z}_1$ to $\mathfrak{z}_2$ must pass through a (unique!) point $\mathfrak{z} \in \mathsf{L}_v$. 
See \cite[Figure~5]{BSSW2}.

\smallskip

Next, we recall the isometry group of ${\mathsf{HT}}(\mathsf{q},\mathsf{p})$.
First, the locally compact group of affine transformations
\begin{equation}\label{eq:AffRq}
\operatorname{\sf Aff}({\mathbb H},\mathsf{q}) = \left\{ g=\begin{pmatrix} \mathsf{q}^n & b \\ 0 & 1 \end{pmatrix} :
n \in {\mathbb Z}\,,\;b\in \mathbb{R} \right\}\,,\quad gz = \mathsf{q}^n z + b 
\end{equation}
acts on ${\mathbb H}(\mathsf{q})$ by isometries and preserves the set of ``slicing'' lines $\mathsf{L}_k$
($k \in {\mathbb Z}$). Left Haar measure $dg$ and its modular function 
${\boldsymbol\delta}_{{\mathbb H}} = {\boldsymbol\delta}_{{\mathbb H},\mathsf{q}}$ are 
$$
dg = \mathsf{q}^{-n}\,dn\,db \quad\text{and}\quad {\boldsymbol\delta}_{{\mathbb H}}(g) = \mathsf{q}^{-n}\,,\quad\text{if}\quad
g={\scriptsize \begin{pmatrix}\mathsf{q}^n& b \\ 0 & 1 \end{pmatrix}}\,,
$$
where $dn$ is counting measure on ${\mathbb Z}$ and $db$ is Lebesgue measure on $\mathbb{R}$. 

Second, denote by $\operatorname{\sf Aut}({\mathbb T}_{\mathsf{p}})$ the full isometry group of ${\mathbb T}_{\mathsf{p}}$ and consider
the \emph{affine group} of ${\mathbb T}_{\mathsf{p}}\,$,
\begin{equation}\label{eq:AffTp}
\operatorname{\sf Aff}({\mathbb T}_{\mathsf{p}}) = \{ \gamma \in \operatorname{\sf Aut}({\mathbb T}_{\mathsf{p}}) : (\gamma v)^-= v^- \;\text{for all}\; v \in V({\mathbb T}_{\mathsf{p}}) \}\,. 
\end{equation}
The modular function 
${\boldsymbol\delta}_{{\mathbb T}} = {\boldsymbol\delta}_{{\mathbb T}_{\mathsf{p}}}$ of $\operatorname{\sf Aff}({\mathbb T}_{\mathsf{p}})$ is given by
$$
{\boldsymbol\delta}_{{\mathbb T}}(\gamma) = \mathsf{p}^{\Phi(\gamma)} \quad \text{where}\quad 
\Phi(\gamma)=\mathfrak{h}(\gamma w) - \mathfrak{h}(w) \,,\quad\text{if} 
\quad \gamma \in \operatorname{\sf Aff}({\mathbb T}_{\mathsf{p}})\,,\;w \in {\mathbb T}\,.
$$
The above mapping $\Phi: \operatorname{\sf Aff}({\mathbb T}) \to {\mathbb Z}$  
is independent of $w \in {\mathbb T}$ and a homomorphism.
\begin{thm}\label{thm:isogroup}\cite{BSSW2} The group
$$
\mathcal{A} = \mathcal{A}(\mathsf{q},\mathsf{p}) = \{ (g,\gamma) \in \operatorname{\sf Aff}({\mathbb H},\mathsf{q}) \times \operatorname{\sf Aff}({\mathbb T}_{\mathsf{p}}) : 
\log_{\mathsf{q}} {\boldsymbol\delta}_{{\mathbb H}}(g) + \log_{\mathsf{p}} {\boldsymbol\delta}_{{\mathbb T}}(\gamma) = 0 \} 
$$
acts  by isometries $(g,\gamma)(z,w) = (gz,\gamma w)$ on ${\mathsf{HT}}(\mathsf{q},\mathsf{p})$, 
with compact quotient isomorphic with the circle of length
$\log \mathsf{q}$. It leaves the area element $\,d\mathfrak{z}$ of $\,{\mathsf{HT}}$ as well as the transition kernel 
\eqref{eq:pab} invariant:
\begin{equation}\label{eq:groupinvariance}
\mathbf{p}_{\alpha,\beta}(t,\mathfrak{g}\mathfrak{w},\mathfrak{g}\mathfrak{z}) = \mathbf{p}_{\alpha,\beta}(t,\mathfrak{w},\mathfrak{z}) 
\end{equation}
for all $\,t > 0\,,\; \mathfrak{w}, \mathfrak{z} \in {\mathsf{HT}} \;$ and $\; 
\mathfrak{g} \in \mathcal{A}\,$.
\end{thm}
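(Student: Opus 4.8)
The plan is to establish Theorem~\ref{thm:isogroup} in three stages: first verify that $\mathcal{A}$ is a group acting by isometries, then identify the quotient, and finally deduce the invariance statements~\eqref{eq:groupinvariance}. For the first stage, I would note that $\operatorname{\sf Aff}({\mathbb H},\mathsf{q})$ acts on ${\mathbb H}(\mathsf{q})$ by hyperbolic isometries permuting the slicing lines $\mathsf{L}_k$ (indeed $g\mathsf{L}_k = \mathsf{L}_{k+n}$ if $g$ has top-left entry $\mathsf{q}^n$), and $\operatorname{\sf Aff}({\mathbb T}_{\mathsf{p}})$ acts on the metric tree ${\mathbb T}_{\mathsf{p}}$ permuting the horocycles $H_k$, shifting horocycle levels by $\Phi(\gamma)$. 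The defining condition $\log_{\mathsf{q}} {\boldsymbol\delta}_{{\mathbb H}}(g) + \log_{\mathsf{p}} {\boldsymbol\delta}_{{\mathbb T}}(\gamma) = 0$, which unwinds to $-n + \Phi(\gamma) = 0$, i.e. $n = \Phi(\gamma)$, is exactly what is needed so that the pair $(g,\gamma)$ preserves the compatibility relation $\mathfrak{h}(w) = \log_{\mathsf{q}}(\operatorname{\text{\sl \!Im}} z)$ defining ${\mathsf{HT}}(\mathsf{q},\mathsf{p})$ in~\eqref{eq:treebolicdef}: if $\mathfrak{h}(w) = \log_{\mathsf{q}}(\operatorname{\text{\sl \!Im}} z)$ then $\mathfrak{h}(\gamma w) = \mathfrak{h}(w) + \Phi(\gamma)$ and $\log_{\mathsf{q}}(\operatorname{\text{\sl \!Im}}(gz)) = \log_{\mathsf{q}}(\operatorname{\text{\sl \!Im}} z) + n$, and these agree precisely when $n = \Phi(\gamma)$. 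That $\mathcal{A}$ is closed under composition and inversion is then immediate since both $g \mapsto -\log_{\mathsf{q}}{\boldsymbol\delta}_{{\mathbb H}}(g) = n$ and $\gamma \mapsto \Phi(\gamma)$ are homomorphisms. That the action is by isometries follows from the metric formula~\eqref{eq:metric}: the metric is assembled from the hyperbolic metric $\mathsf{d}_{{\mathbb H}}$ within strips together with the tree combinatorics encoded by the confluent, both of which are preserved, and $g$ maps the line $\mathsf{L}_{\mathfrak{h}(v)}$ isometrically onto $\mathsf{L}_{\mathfrak{h}(\gamma v)}$.

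For the quotient, I would argue as follows. The group $\mathcal{A}$ acts transitively on the set of bifurcation lines $\{\mathsf{L}_v : v \in V({\mathbb T})\}$: given $v, v'$ we may choose $\gamma \in \operatorname{\sf Aff}({\mathbb T}_{\mathsf{p}})$ with $\gamma v = v'$ (the affine group of the tree is transitive on vertices since it can realize any level shift plus any automorphism fixing the predecessor relation), and then pick $g$ with $n = \Phi(\gamma)$, adjusting the real translation parameter $b$ so that $g$ sends the chosen reference point on $\mathsf{L}_{\mathfrak{h}(v)}$ to the chosen point on $\mathsf{L}_{\mathfrak{h}(v')}$; the pair $(g,\gamma)$ then lies in $\mathcal{A}$ and carries $\mathsf{L}_v$ to $\mathsf{L}_{v'}$. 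In fact one sees that $\mathcal{A}$ acts transitively on $\operatorname{\sf LT}$, since the real-translation part of $g$ can be chosen freely. Thus a fundamental domain for the action is a transversal to the $\mathbb{R}$-translations within a single strip $\mathsf{S}_v$ together with the identification of its two bounding lines; concretely, the quotient is obtained from the strip $\mathsf{q}^{k-1} \le y \le \mathsf{q}^k$ (modulo horizontal translation, i.e.\ the segment $y \in [\mathsf{q}^{k-1}, \mathsf{q}^k]$ of hyperbolic length $\log\mathsf{q}$) by gluing top to bottom, which is a circle of length $\log\mathsf{q}$. Compactness of the quotient then follows because ${\mathsf{HT}}$ is covered by the $\mathcal{A}$-translates of this one strip.

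For the invariance of the area element, within each strip $\mathsf{S}_v^o$ the action of $(g,\gamma)$ restricts to the hyperbolic isometry $z \mapsto gz$ of (a copy of) ${\mathbb H}$, which preserves $d\mathfrak{z} = y^{-2}\,dx\,dy$; since the bifurcation lines have measure zero this gives invariance of $d\mathfrak{z}$ on all of ${\mathsf{HT}}$. For~\eqref{eq:groupinvariance} I would invoke the essential self-adjointness and the construction of the heat kernel from Proposition~\ref{pro:saHT}: since the Laplacian $\Delta_{\alpha,\beta}$ is built from the operator $y^2(\partial_x^2 + \partial_y^2) + \alpha y\,\partial_y$ in each strip together with the bifurcation conditions~\eqref{eq:bif}, and the hyperbolic Laplacian part, the drift term $\alpha y\,\partial_y$ (which only depends on $y = \operatorname{\text{\sl \!Im}} z$, preserved up to the factor $\mathsf{q}^n$ that the operator is homogeneous under), and the weight $\phi_{\alpha,\beta}$ all transform compatibly under $(g,\gamma)$, one checks that $(g,\gamma)$ induces a unitary of ${\mathcal L}^2({\mathsf{HT}}, \mathbf{m}_{\alpha,\beta})$ commuting with $\Delta_{\alpha,\beta}$ on the core $\mathcal{D}^\infty_{\alpha,\beta,c}$; the hard point here is verifying that the core is preserved and that the measure $\mathbf{m}_{\alpha,\beta}$ is invariant, which reduces to checking $\phi_{\alpha,\beta}(\mathfrak{g}\mathfrak{z}) = \phi_{\alpha,\beta}(\mathfrak{z})$, i.e.\ $\beta^{\mathfrak{h}(\gamma v)}(\operatorname{\text{\sl \!Im}} gz)^\alpha = \beta^{\mathfrak{h}(v)}(\operatorname{\text{\sl \!Im}} z)^\alpha$ — and this is \emph{false} for $\beta \ne 1$ unless one absorbs the level shift, so in fact the right statement is that $\mathbf{m}_{\alpha,\beta}$ transforms by the character $\mathfrak{g} \mapsto \beta^{\Phi(\gamma)}\mathsf{q}^{n\alpha}$, under which the heat kernel density $\mathbf{p}_{\alpha,\beta}$ is nonetheless invariant because the compensating factors cancel between $\mathbf{h}_{\alpha,\beta}$ and $\phi_{\alpha,\beta}$ in~\eqref{eq:pab}. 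Once the semigroup invariance $e^{t\Delta_{\alpha,\beta}}(f\circ\mathfrak{g}) = (e^{t\Delta_{\alpha,\beta}}f)\circ\mathfrak{g}$ is established via the commutation on the core, the identity~\eqref{eq:groupinvariance} for $\mathbf{p}_{\alpha,\beta}$ follows by comparing integral representations and using continuity of the kernel. The main obstacle is precisely this last bookkeeping: tracking how the non-unimodular factors ${\boldsymbol\delta}_{{\mathbb H}}$ and ${\boldsymbol\delta}_{{\mathbb T}}$, the weight $\beta^{\mathfrak{h}(v)}$, and the powers of $\operatorname{\text{\sl \!Im}} z$ interact so that, despite $\mathbf{m}_{\alpha,\beta}$ not being $\mathcal{A}$-invariant when $\beta\ne 1$ or $\alpha \ne 0$, the transition density $\mathbf{p}_{\alpha,\beta}$ with respect to the genuinely invariant area element $d\mathfrak{z}$ \emph{is} invariant.
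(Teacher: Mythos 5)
The paper does not actually prove Theorem~\ref{thm:isogroup} here: it is stated with the citation \cite{BSSW2} and recalled for later use, so there is no in-text argument to compare against. Judged on its own terms, your reconstruction is essentially correct and follows the natural route: verify that the defining constraint $\log_{\mathsf{q}}{\boldsymbol\delta}_{{\mathbb H}}(g)+\log_{\mathsf{p}}{\boldsymbol\delta}_{{\mathbb T}}(\gamma)=0$ amounts to $n=\Phi(\gamma)$ and is exactly what preserves the compatibility condition $\mathfrak{h}(w)=\log_{\mathsf{q}}(\operatorname{\text{\sl \!Im}} z)$; deduce the isometric action strip-by-strip from \eqref{eq:metric}; obtain the circle quotient from transitivity on $\operatorname{\sf LT}$ together with the fact that the $y$-segment of a single strip has hyperbolic length $\log\mathsf{q}$; and get invariance of $\mathbf{p}_{\alpha,\beta}$ by showing that $\mathfrak{t}_{\mathfrak{g}}$ conjugates the core $\mathcal D^\infty_{\alpha,\beta,c}$ to itself, commutes with $\Delta_{\alpha,\beta}$, and therefore with the semigroup, while $d\mathfrak{z}$ is invariant.

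Two small points. First, your statement that $\phi_{\alpha,\beta}(\mathfrak{g}\mathfrak{z})=\phi_{\alpha,\beta}(\mathfrak{z})$ ``is false for $\beta\ne1$ unless one absorbs the level shift'' is slightly off: with $n=\Phi(\gamma)$ one gets $\phi_{\alpha,\beta}(\mathfrak{g}\mathfrak{z})=(\beta\mathsf{q}^{\alpha})^{n}\,\phi_{\alpha,\beta}(\mathfrak{z})$, so the measure $\mathbf{m}_{\alpha,\beta}$ is $\mathcal{A}$-invariant precisely when $\beta\mathsf{q}^{\alpha}=1$, which can hold with $\beta\ne1$; and similarly your parenthetical ``when $\beta\ne1$ or $\alpha\ne0$'' overstates the failure. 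This does not affect the conclusion, since you correctly record the character $(\beta\mathsf{q}^{\alpha})^{n}$ in the next clause. Second, the ``compensating factors cancel'' step deserves one more line: $\mathfrak{t}_{\mathfrak{g}}$ is a scalar multiple of a unitary on $\mathcal{L}^2(\mathbf{m}_{\alpha,\beta})$, so conjugation still preserves self-adjointness and hence the unique self-adjoint extension and its semigroup; unwinding the kernel identity $e^{t\Delta}(f\circ\mathfrak{g})=(e^{t\Delta}f)\circ\mathfrak{g}$ against $d\mathbf{m}$ gives $\mathbf{h}_{\alpha,\beta}(t,\mathfrak{g}\mathfrak{w},\mathfrak{g}\mathfrak{z})=(\beta\mathsf{q}^{\alpha})^{-n}\mathbf{h}_{\alpha,\beta}(t,\mathfrak{w},\mathfrak{z})$, which combined with $\phi_{\alpha,\beta}(\mathfrak{g}\mathfrak{z})=(\beta\mathsf{q}^{\alpha})^{n}\phi_{\alpha,\beta}(\mathfrak{z})$ yields \eqref{eq:groupinvariance}. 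With these touch-ups the argument is complete.
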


Next, we recall from \cite{BSSW2} the stopping times $\tau(n)\;$ ($n \in {\mathbb N}_0$)
of the successive visits of $(Y_t)$ in ${\mathbb Z}$
\begin{equation}\label{eq:stop}
\tau(0)=0, \quad \tau(n+1) 
= \inf\bigl\{ t > \tau(n) : Y_t \in {\mathbb Z} \setminus \{ Y_{\tau(n)} \}\bigr\}\,.
\end{equation}
Via Proposition \ref{pro:projections}, we can also interpret them in terms
of Brownian motion on ${\mathsf{HT}}$. If $X_0$ lies in $\mathsf{S}_v^o$, then $\tau(1)$ is
the instant when $X_t$ first meets a point on $\mathsf{L}_v \cup \mathsf{L}_{v^-}\,$.  If 
$X_{\tau(n)} \in \mathsf{L}_v$ for some $v \in V({\mathbb T})$
(which holds for all $n \ge 1$, and possibly also for $n=0$), then
$\tau(n+1)$ is the first instant $t > \tau(n)$ when $X_t$ meets one of the
bifurcation lines $\mathsf{L}_{v^-}$ or $\mathsf{L}_w$ with $w^- = v$. 
The increments $\tau(n)-\tau(n-1)$, $n \ge 1$, are independent and almost surely finite.
They are identically distributed for $n\ge 2$, and when $Y_0 \in {\mathbb Z}$ (equivalently, $Z_0 \in \operatorname{\sf LT}$),
then also $\tau(1)$ has the same distribution.
The random variables
\begin{equation}\label{eq:tau}
\tau = \tau(2) - \tau(1) \quad\text{and}\quad Y = Y_{\tau(2)} - Y_{\tau(1)}
\end{equation}
are independent, 
\begin{equation}\label{eq:rha}
\mathsf{Pr}[Y = 1] = \frac{\mathsf{a}}{\mathsf{a}+1} \quad\text{and}\quad  
\mathsf{Pr}[Y = -1] = \frac{1}{\mathsf{a}+1}\,,\quad 
\mathsf{a} = \beta \mathsf{p} \,\mathsf{q}^{\alpha-1}\,,
\end{equation}
and $\tau$ has finite exponential moment $\mathsf{E}(e^{\lambda_0 \tau})$ for some $\lambda_0 > 0$.

\smallskip

For any open domain $\Theta \subset {\mathsf{HT}}$, we let 
\begin{equation}\label{eq:exit}
 \tau^{\Theta} = \inf \{ t>0: X_t \in {\mathsf{HT}} \setminus \Theta\}
\end{equation}
be the \emph{first exit time} of $(X_t)$ from $\Theta$.

If $\tau = \tau^{\Theta} < \infty$ almost surely for the starting point
$X_0 = \mathfrak{w} \in \Theta$, then we write $\mu_{\mathfrak{w}}^{\Theta}$ for the
distribution of $X_{\tau}\,$. In all cases that we shall consider, 
$\mu_{\mathfrak{w}}^{\Theta}$ is going to be a probability measure supported by $\partial \Theta$.
We shall use analogous notation on ${\mathbb H}$, ${\mathbb T}$ and $\mathbb{R}$. We note that
by group invariance of our Laplacian,
\begin{equation}\label{eq:inv-meas}
\mu_{\mathfrak{w}}^{\Theta}(B) = \mu_{\mathfrak{g}\mathfrak{w}}^{\mathfrak{g}\Theta}(\mathfrak{g} B) \quad\text{for every}\; 
\mathfrak{g} \in \mathcal{A} \;\text{and Borel set}\; B \subset {\mathsf{HT}} \,.
\end{equation}

We conclude this section with the ``official'' definition of harmonic functions.

\begin{dfn}\label{def:harmonic} Let $\Theta \subset {\mathsf{HT}}$ be open.
A continuous function $f:\Theta  \to \mathbb{R}$ is called \emph{harmonic on $\Theta$}
if for every open, relatively compact set $U$ with $\overline U \subset \Theta$,
$$
f(\mathfrak{z}) = \int f\, d\mu_{\mathfrak{z}}^U \quad \text{for all}\; \mathfrak{z} \in U\,.
$$
\end{dfn}

As mentioned in \cite{BSSW2}, from a classical analytic viewpoint, 
``harmonic'' should rather mean ``annihilated by the Laplacian'', but
for general open domains in ${\mathsf{HT}}$, the correct formulation in these terms 
is subtle in view of the relative location of the bifurcations.  
By \cite[Theorem 5.9]{BSSW1}, if $\Theta$ is suitably ``nice'', then any
harmonic function on $\Theta$ is anihilated by $\Delta_{\alpha,\beta}\,$. This is true in 
particular for the sets $\Omega_r$ that we are going to use in the next section; see 
Figure~2 below. More details will be stated and used later on; in particular, see Proposition
\ref{pro:harmonic} regarding globally harmonic functions on ${\mathsf{HT}}$.

\section{Harmonic functions on rectangular sets}\label{sec:rectangular}
 
In this section, we want to derive a Poisson representation formula
for harmonic functions on open domains $\Omega \subset {\mathsf{HT}}$ which 
have a rectangular shape. Prototypes are the following sets, where 
$v \in V({\mathbb T})$ and $r > 0$.
\begin{equation}\label{eq:Omv}
\begin{gathered}
\Omega_v = \{ (z,w) \in {\mathsf{HT}} : w \in N(v)^o\} \subset {\mathsf{HT}}\,, \quad\text{and}\quad \\ 
\Omega_{v,r} = \{ \mathfrak{z} \in \Omega_v : |\operatorname{\text{\sl \!Re}} \mathfrak{z}| < r \}\,,
\end{gathered}
\end{equation}
where $N(v)$ is the ``neighbourhood star'' at $v$ in ${\mathbb T}$. 
That is, $N(v)$ is the union of all edges ($\equiv$ intervals !) of ${\mathbb T}$ which
have $v$ as one endpoint. It is a compact metric subtree of ${\mathbb T}$, whose boundary
$\partial N(v)$ consists of all neighbours of $v$ in $V({\mathbb T})$.  
 We write
$\partial^+ N(v) = \partial N(v) \setminus \{ v^-\}$ (the forward neighbours
of $v$).

Note that we can define the space
${\mathcal C}^\infty(\Omega)$ for $\Omega=\Omega_v$ or $\Omega_{v,r}$ in the same way as in 
Definition \ref{def:Cinfty} by restricting to the (pieces of the) strips that make up $\Omega\,$.
We recall the following \cite[Theorem 5.9]{BSSW1}:

\begin{pro}\label{pro:harmonic}
A function $f$ is harmonic on $\Omega = {\mathsf{HT}}$, resp. $\Omega=\Omega_v\,$, resp.
$\Omega=\Omega_{v,r}\,$ if and only if 
$f \in \mathcal C^{\infty}(\Omega)$, it satisfies the bifurcation conditions
\eqref{eq:bif} at each bifurcation line inside $\Omega$, 
and $y^{2}(\partial_x^2+\partial_y^2)f
+\alpha \,y\, \partial_yf = 0$ in each open strip in $\Omega$.
\end{pro}

For deriving a Poisson representation for harmonic
functions on $\Omega_v\,$, it is sufficient to 
consider just $\Omega_o\,$ (by group invariance). We shall henceforth 
always reserve the letter 
$\Omega$ for this specific set. We follow classical reasoning,
but here we have to be careful in view of the singularities of our domain
along the bifurcation line $\mathsf{L}_o\,$; the validity of each step has to
be checked. 

We first work with the bounded subsets $\Omega_r = \Omega_{o,r}\,$.
Here, $r > 0$ will always be a real number, so that $\Omega_r$ should not be
confounded with $\Omega_v$ for $v \in V({\mathbb T})$. 
The boundary $\partial \Omega_r$ in ${\mathsf{HT}}$ consists of the 
\emph{horizontal part} and the \emph{vertical part}, given by 
\begin{equation}\label{eq:hr-vr}
\begin{aligned}
\partial^{\text{\rm hor}} \Omega_r 
&= \{ (z,v) \in {\mathsf{HT}} : v \in \partial N(o)\,,\; |\operatorname{\text{\sl \!Re}} z| \le r\} \quad\text{and}\quad\\
\partial^{\text{\rm vert}} \Omega_r &= N_r(o) \cup N_{-r}(o)\,,\quad \text{where for any}\;
r \in \mathbb{R}\,,\\
N_r(o) &= \{ (z,w) \in {\mathsf{HT}} : w \in N(o)\,,\; \operatorname{\text{\sl \!Re}} z = r\}\,,
\end{aligned}
\end{equation}
respectively. 
The horizontal part lies on the union of all bifurcation 
lines $\mathsf{L}_v\,$, $v \in \partial N(o)$. The vertical part consists of two 
isometric copies of the compact metric tree $N(o)$ within ${\mathsf{HT}}$, which delimit 
our set on the left 
and right hand sides, at which $\operatorname{\text{\sl \!Re}} z = - r$, resp. $\operatorname{\text{\sl \!Re}} z = r$.
We call the elements of the finite set $\bigl\{ (z,v) \in {\mathsf{HT}}: |\operatorname{\text{\sl \!Re}} z| = r\,,\ 
v \in V\bigl(N(o)\bigr) \bigr\}$ the \emph{corners} of~$\Omega_r\,$. 
$$
\beginpicture 

\setcoordinatesystem units <1mm,1mm> 

\setplotarea x from -16 to 54, y from -20 to 36

\plot 0 0  40 20  48 36  8 16  0 0  -8 16  32 36  35.2 29.6 / 

\plot 0 0  9 -18  49 2  40 20 /

\put {$\bullet$} at 20 10

\put {$\mathfrak{o}$} at 21.5 8.5
\multiput{$\circ$} at   0 0  9 -18  49 2  40 20
-8 16   8 16  32 36  48 36 /
\endpicture
$$
\begin{center}
{\sl Figure 2.\/} The compact set $\overline\Omega_r\,$. 
Its corners are encircled.
\end{center}

\smallskip

Our $\overline\Omega_r$ is the union of the finitely many closed
``rectangles'' $\overline R_{v,r}\,$, where
$$
R_{v,r} = \{ \mathfrak{z} \in \mathsf{S}_v^o : |\operatorname{\text{\sl \!Re}} \mathfrak{z}| < r \}.
$$
The boundary of $R_{v,r}$ consists of two horizontal sides
$L_{v,r}$ and $L_{v^-,r}$ and two vertical 
sides $J_{v,r}$ and $J_{v,-r}\,$, where $L_{v,r}= \{ (z,v) \in \mathsf{L}_v: |z| \le r \}$ and 
$J_{v,r} = \{ (z,w) \in {\mathsf{HT}}: w \in [v^-,v]\,,\; \operatorname{\text{\sl \!Re}} z = r \}$.
Here, the vertex $v$ ranges in $\{ o\} \cup \partial^+N(o)$, and
the closed rectangles $\overline R_{v,r}$ and $\overline R_{o,r}$ meet at their lower, resp.
upper horizontal sides, when $v^- = o$.

We want to write down Green's formulas for ``nice'' functions on 
$\overline\Omega_r\,$. We start by specifying those functions.

\begin{dfn}\label{def:D2}
Let $\mathcal D^2_{\alpha,\beta}(\overline\Omega_r)$ be the space of all 
continuous functions $f$ on $\overline\Omega_r$ with the following properties.
\begin{itemize}
\item[(i)]
For $v=o$ as well as for any forward neighbour $v$ of $o$ in $V({\mathbb T})$, 
the partial derivatives of $f$ up to $2$nd order exist and are bounded 
and continuous on $R_{v,r}\,$, and 
\item[(ii)] they extend continuously from $R_{v,r}$
to $\partial R_{v,r}\,$, except possibly at the corners of
$\Omega_r$. (But on bifurcation lines, the extensions coming from 
different strips will in general not coincide.) 
\item[(iii)] The first order derivatives satisfy the bifurcation condition
\eqref{eq:bif} along the segment $L_{o,r}\,$.
\item[(iv)] $\Delta_{\alpha,\beta} f$ extends continuously to $\Omega_r\,$ 
across the bifurcation line.
\end{itemize}
If $U \subset \Omega_r$ is a ``nicely shaped'' open subset of
$\Omega_r$, we define the space 
$\mathcal D^2_{\alpha,\beta}(\overline\Omega_r \setminus U)$ analogously.
Furthermore,  we define 
$\mathcal D^2_{\alpha,\beta}(\overline\Omega)$ as the space of all functions on
$\overline\Omega$ whose restriction to $\overline\Omega_r$ is in 
$\mathcal D^2_{\alpha,\beta}(\overline\Omega_r)$ for every $r > 0$.
\end{dfn}

Next, we need to specify the 
boundary (arc) measure induced by the measure $\mathbf{m} = \mathbf{m}_{\alpha,\beta}$ of
\eqref{eq:mab}. (We shall omit the paramters $\alpha, \beta$ in the index.)
More precisely, we need one such arc measure $\mathbf{m}_v' = \mathbf{m}_{\alpha,\beta;v}'$
on each closed strip $\mathsf{S}_v\,$, where $v \in V({\mathbb T})$,
given by
\begin{equation}\label{eq:mab'}
d\mathbf{m}_v'(\mathfrak{z})= \phi_{\alpha,\beta}(\mathfrak{z})\,\sqrt{dx^2 + dy^2}\big/y 
\quad\text{for}\quad \mathfrak{z}=(x+\mathfrak{i}\, y\,; v)\in \mathsf{S}_v\,,
\end{equation}
where $\phi_{\alpha,\beta}(\mathfrak{z})$ is as in \eqref{eq:mab}. 
Contrary to the definition \eqref{eq:mab} of $\mathbf{m}$, the lower 
boundary line $\mathsf{L}_{v^-}$ of $\mathsf{S}_v$ is not excluded in this definition, so that
on that line we have the two arc measures $\mathbf{m}_v'$ and $\mathbf{m}_{v^-}'$ 
which differ by a factor of $\beta$. Their use depends on the side from which
the line is approached.
Now we can construct the boundary measure 
$\mathbf{m}^{\partial \Omega_r}$ and the boundary gradient of 
$f \in \mathcal D^2_{\alpha,\beta}(\overline \Omega_r)$ as follows:
\\[3pt]
(1) for each $v \in \partial^+N(o)$, we let
$\mathbf{m}^{\partial \Omega_r} = \mathbf{m}_v'$ and 
$\nabla^{\partial \Omega_r}f = \nabla f_v$ (the extension to $\partial \Omega_r$
within $\overline{\mathsf{S}}_v$) on the boundary parts $L_{v,r}$ and on $J_{v,\pm r}\,$;
\\[3pt]
(2) furthermore, $\mathbf{m}^{\partial \Omega_r} = \mathbf{m}_o'$ and 
$\nabla^{\partial \Omega_r}f = \nabla f_o$ (the extension to $\partial \Omega_r$
within $\overline{\mathsf{S}}_o$) on the boundary parts $L_{o^-,r}$ and on $J_{o,\pm r}\,$.
\\[3pt]
In (1) and (2), the corners of $\Omega$ -- a set with measure $0$ -- are
excluded. In the following, the occuring boundaries are considered with
positive orientation.  

\begin{pro}\label{pro:green} For $f, h \in \mathcal D^2_{\alpha,\beta}(\overline\Omega_r)$,
$$
\begin{aligned}
\int_{\Omega_r} \!\!\Bigl( f \,\Delta_{\alpha,\beta}h + (\nabla f, \nabla h)\Bigr)
\, d\mathbf{m} &=\! \int_{\partial \Omega_r}\!\! (\mathsf{n}, \nabla^{\partial \Omega_r}h)\, f 
\,  d\mathbf{m}^{\partial \Omega_r} \quad\text{and}\quad\\
\int_{\Omega_r}\!\! \Bigl( f \,\Delta_{\alpha,\beta}h - h\,\Delta_{\alpha,\beta}f)\Bigr)
\, d\mathbf{m} &=\! \int_{\partial \Omega_r}\!\! \Bigl((\mathsf{n}, \nabla^{\partial \Omega_r}h)\, f - 
(\mathsf{n}, \nabla^{\partial \Omega_r}f)\, h\Bigr)\,d\mathbf{m}^{\partial \Omega_r},
\end{aligned}
$$
where $\mathsf{n}(\mathfrak{z}) = \mathsf{n}^{\partial \Omega_r}(\mathfrak{z})$ is the outward unit normal vector
to $\partial \Omega_r$ at $\mathfrak{z} \in \partial \Omega_r\,$,
defined except at the corners of $\Omega_r\,$.
\end{pro}

\begin{proof} 
We can write down the classical first Green formula as above on each
$R = R_{v,r}$, with the boundary gradient $\nabla^{\partial R}f$  :
$$
\int_{R_{v,r}} \Bigl( f \,\Delta_{\alpha,\beta}h + (\nabla f, \nabla h)\Bigr)
\, d\mathbf{m} = \int_{\partial R_{v,r}} (\mathsf{n}, \nabla^{\partial R}h)\, f 
\,  d\mathbf{m}'_v\,.
$$
Here, as above for $\Omega_r$ above, the boundary gradient 
$\nabla^{\partial R}f$ is the continuous extension of the gradient 
to $\partial R_{v,r}\,$, well defined except at the corners. 

Indeed, we can first inscribe a slightly smaller rectangle whose closure is
contained in $R_{v,r}\,$. We then have Green's first formula on that set, as 
stated in
any calculus textbook. Then we can increase that inscribed rectangle and
pass to the limit. The exchange of limit and integrals is legitimate because
of the boundedness assumptions that appeared in the definition of the space
$\mathcal D^2_{\alpha,\beta}(\overline\Omega)$.

\smallskip

Now we take the sum of those integrals over all $R_{v,r}$ that make up
$\Omega_r\,$. The segment $L_{o,r}$ of the bifurcation
line $\mathsf{L}_o$ lies in the interior of $\Omega$. The bifurcation condition 
makes sure that the line integrals over $L_{o,r}$ coming from the 
different adjacent rectangles sum up to $0$. 

Green's second formula follows from the first one.
\end{proof}

By \cite[Theorems 5.9 and 5.19]{BSSW1}, the set $\Omega_r$ admits a Dirichlet 
Green kernel $\mathbf{g}^{\Omega_r}(\cdot, \mathfrak{w})$ which for $\mathfrak{w} \in \Omega_r$ belongs to 
$\mathcal D^2_{\alpha,\beta}(\overline\Omega_r\setminus U)$ for any open neighbourhood
$U$ of its pole $\mathfrak{w}$.
The fact that $\mathbf{g}^{\Omega_r}(\cdot, \mathfrak{w})$ is smooth up to the boundary segments
of our $\Omega_r$ (with exception of the corners, where, however, the partial
derivatives are bounded) follows from the general theory of ellptic 2nd order
PDEs; see {\sc Evans}~\cite[Theorem 6, page 326]{Eva}. 

We use it to define the \emph{Poisson kernel} 
\begin{equation}\label{eq:Poiss}
\Pi^{\Omega_r}(\mathfrak{w},\mathfrak{z})= 
-\Bigl(\mathsf{n}^{\partial \Omega_r}(\mathfrak{z})\,,\, 
\nabla_{\mathfrak{z}}^{\partial \Omega_r}\mathbf{g}^{\Omega_r}(\mathfrak{z}, \mathfrak{w})\Bigr)\,,\quad 
\mathfrak{w} \in \Omega_r\,,\; 
\mathfrak{z} \in \partial \Omega_r\,.
\end{equation}
(The gradient is applied to the variable $\mathfrak{z}$ in the index, or rather to
its hyperbolic part $z = x+\mathfrak{i}\, y$).)
Note that the Poisson kernel is positive.

The argument used for the next lemma is classical, but it is 
important to check that each step works in our ``non-classical'' setting 
with the bifurcation lines and Kirchhoff condition.

\begin{lem}\label{lem:poiss}{\bf [Poisson representation]}
Let $h \in \mathcal D^2_{\alpha,\beta}(\overline\Omega_r)$
be harmonic, that is, $\Delta_{\alpha,\beta}h = 0$ in $\Omega_r\,$.
Then for every $\mathfrak{w} \in \Omega_r\,$,
$$
h(\mathfrak{w}) = \int_{\partial \Omega_r} \Pi^{\Omega_r}(\mathfrak{w},\mathfrak{z}) h(\mathfrak{z}) \, 
d\mathbf{m}^{\partial \Omega_r}(\mathfrak{z})\,.
$$
\end{lem}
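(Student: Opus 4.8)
The plan is to derive the Poisson representation from the two Green formulas in Proposition~\ref{pro:green}, exactly as in the classical theory, but checking carefully that the Kirchhoff condition along $\mathsf{L}_o$ and the corner singularities do not obstruct any step. First I would fix $\mathfrak{w} \in \Omega_r$ and, for small $\varepsilon > 0$, let $B_\varepsilon = B_\varepsilon(\mathfrak{w})$ be a small (hyperbolic) ball around $\mathfrak{w}$ whose closure is contained in a single open strip $\mathsf{S}_v^o \cap \Omega_r$; this is possible because $\Omega_r$ is open and the bifurcation line $\mathsf{L}_o$ is nowhere dense, so we may assume $\mathfrak{w} \notin \mathsf{L}_o$ (the case $\mathfrak{w} \in \mathsf{L}_o$ follows afterwards by continuity of both sides in $\mathfrak{w}$, since $h$ is continuous on $\overline\Omega_r$ and, as noted, $\mathbf{g}^{\Omega_r}(\cdot,\mathfrak{w})$ depends continuously on $\mathfrak{w}$). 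On the region $\Omega_r \setminus \overline{B_\varepsilon}$, both $h$ and $\mathbf{g}^{\Omega_r}(\cdot,\mathfrak{w})$ lie in $\mathcal D^2_{\alpha,\beta}(\overline\Omega_r \setminus B_\varepsilon)$: for $h$ this is the hypothesis, for the Green kernel it is the regularity-up-to-the-boundary statement recalled from \cite{BSSW1} together with \cite[Theorem~6, p.~326]{Eva}.

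Next I would apply Green's second formula (the second identity in Proposition~\ref{pro:green}, in the version for the domain $\Omega_r \setminus B_\varepsilon$) to the pair $f = \mathbf{g}^{\Omega_r}(\cdot,\mathfrak{w})$ and $h$. Since $\Delta_{\alpha,\beta} h = 0$ on $\Omega_r$ and $\Delta_{\alpha,\beta}\,\mathbf{g}^{\Omega_r}(\cdot,\mathfrak{w}) = 0$ on $\Omega_r \setminus \overline{B_\varepsilon}$ (the pole being excised), the two volume integrals vanish, leaving
$$
0 = \int_{\partial(\Omega_r \setminus B_\varepsilon)} \Bigl( (\mathsf{n},\nabla^{\partial} h)\,\mathbf{g}^{\Omega_r}(\cdot,\mathfrak{w}) - (\mathsf{n},\nabla^{\partial}\mathbf{g}^{\Omega_r}(\cdot,\mathfrak{w}))\,h \Bigr)\, d\mathbf{m}^{\partial}\,.
$$
The boundary $\partial(\Omega_r \setminus B_\varepsilon)$ splits into $\partial \Omega_r$ and $\partial B_\varepsilon$ (with opposite orientation on the latter). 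On $\partial \Omega_r$ the term $(\mathsf{n},\nabla^{\partial}\mathbf{g}^{\Omega_r}(\cdot,\mathfrak{w}))\,h$ contributes exactly $-\int_{\partial\Omega_r}\Pi^{\Omega_r}(\mathfrak{w},\mathfrak{z})h(\mathfrak{z})\,d\mathbf{m}^{\partial\Omega_r}(\mathfrak{z})$ by the definition \eqref{eq:Poiss} of the Poisson kernel, while the term $(\mathsf{n},\nabla^{\partial}h)\,\mathbf{g}^{\Omega_r}(\cdot,\mathfrak{w})$ vanishes there because $\mathbf{g}^{\Omega_r}(\cdot,\mathfrak{w}) = 0$ on $\partial \Omega_r$ by the Dirichlet boundary condition. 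It remains to evaluate the limit as $\varepsilon \to 0$ of the two integrals over $\partial B_\varepsilon$. Here, since the ball sits inside one open strip where $\mathbf{m} = y^\alpha\beta^{\mathfrak{h}(v)}\,d\mathfrak{z}$ and $\mathbf{m}^{\partial}$ has the corresponding smooth density, the situation is purely that of a classical (hyperbolic) second-order elliptic operator with smooth coefficients, so the standard local analysis of the Green function singularity applies: $\int_{\partial B_\varepsilon}(\mathsf{n},\nabla h)\,\mathbf{g}^{\Omega_r}(\cdot,\mathfrak{w})\,d\mathbf{m}^{\partial} \to 0$ and $\int_{\partial B_\varepsilon}(\mathsf{n},\nabla \mathbf{g}^{\Omega_r}(\cdot,\mathfrak{w}))\,h\,d\mathbf{m}^{\partial} \to h(\mathfrak{w})$ (up to orientation), exactly the normalization built into the Green kernel. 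Combining, $h(\mathfrak{w}) = \int_{\partial \Omega_r}\Pi^{\Omega_r}(\mathfrak{w},\mathfrak{z})h(\mathfrak{z})\,d\mathbf{m}^{\partial\Omega_r}(\mathfrak{z})$.

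The main obstacle — and the point where the "non-classical" setting must be taken seriously — is the legitimacy of Green's second formula on $\Omega_r \setminus B_\varepsilon$ in the presence of the bifurcation segment $L_{o,r}$ and the corners. Proposition~\ref{pro:green} already handles $L_{o,r}$: when one sums the rectangle-wise classical Green formulas, the line integrals over $L_{o,r}$ from the adjacent strips cancel precisely because \emph{both} $h$ and $\mathbf{g}^{\Omega_r}(\cdot,\mathfrak{w})$ satisfy the bifurcation condition \eqref{eq:bif} there (for $\mathbf{g}^{\Omega_r}$ this is part of its membership in $\mathcal D^2_{\alpha,\beta}(\overline\Omega_r\setminus U)$). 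The corners of $\Omega_r$ form a finite set of $\mathbf{m}^{\partial\Omega_r}$-measure zero at which the derivatives, though possibly discontinuous, stay bounded (again by the definition of $\mathcal D^2_{\alpha,\beta}$ and by \cite[Theorem~6, p.~326]{Eva}), so they contribute nothing to the boundary integrals; this is exactly why the identities in Proposition~\ref{pro:green} were phrased with the corners excluded. Finally, to pass to $\mathfrak{w} \in \mathsf{L}_o$ I would note that $\mathfrak{w} \mapsto \Pi^{\Omega_r}(\mathfrak{w},\mathfrak{z})$ is continuous on $\Omega_r$ for $\mathfrak{z} \in \partial\Omega_r$ fixed (inherited from the joint continuity of $\nabla_{\mathfrak{z}}\mathbf{g}^{\Omega_r}(\mathfrak{z},\cdot)$ up to the boundary) and bounded uniformly for $\mathfrak{w}$ in a compact neighbourhood of $\mathsf{L}_o$, so dominated convergence gives the formula on all of $\Omega_r$.
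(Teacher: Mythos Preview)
Your argument is correct and follows the standard ``excise a ball around the pole'' route: apply Green's second identity to $h$ and $\mathbf{g}^{\Omega_r}(\cdot,\mathfrak{w})$ on $\Omega_r\setminus\overline{B_\varepsilon}$, use the Dirichlet condition on $\partial\Omega_r$, and recover $h(\mathfrak{w})$ from the classical local singularity of the Green function as $\varepsilon\to 0$. The paper takes a different path: it never touches the singularity of $\mathbf{g}^{\Omega_r}$. Instead it picks an arbitrary test function $\varphi\in\mathcal C^\infty_c(\Omega_r)$, forms its Green potential $f(\mathfrak{w})=\int_{\Omega_r}\mathbf{g}^{\Omega_r}(\mathfrak{w},\mathfrak{z})\varphi(\mathfrak{z})\,d\mathbf{m}(\mathfrak{z})$, notes that $f\in\mathcal D^2_{\alpha,\beta}(\overline\Omega_r)$ with $\Delta_{\alpha,\beta}f=-\varphi$ and $f|_{\partial\Omega_r}=0$, and applies Green's second identity (Proposition~\ref{pro:green}) to the pair $(f,h)$ on all of $\Omega_r$. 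A Fubini step then gives $\int_{\Omega_r}h\,\varphi\,d\mathbf{m}=\int_{\Omega_r}\bigl(\int_{\partial\Omega_r}\Pi^{\Omega_r}(\cdot,\mathfrak{z})h(\mathfrak{z})\,d\mathbf{m}^{\partial\Omega_r}\bigr)\varphi\,d\mathbf{m}$ for every such $\varphi$, hence the pointwise identity.

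The trade-off: the paper's approach treats every $\mathfrak{w}\in\Omega_r$ uniformly (no separate case for $\mathfrak{w}\in\mathsf{L}_o$, no continuity-in-$\mathfrak{w}$ argument for the Poisson kernel) and invokes Proposition~\ref{pro:green} exactly as stated, at the cost of citing the regularity theory of \cite{BSSW1} to justify that the Green potential $f$ is a genuine strong solution in $\mathcal D^2_{\alpha,\beta}(\overline\Omega_r)$. Your approach is more hands-on and avoids that regularity appeal for $f$, but requires (i) the obvious extension of Proposition~\ref{pro:green} to the punctured domain $\Omega_r\setminus B_\varepsilon$, (ii) the local asymptotics of $\mathbf{g}^{\Omega_r}$ near its pole, and (iii) the closing continuity argument across $\mathsf{L}_o$. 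All three are unproblematic here, so both proofs go through.
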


\begin{proof}
Let $\varphi$ be any  non-negative function in ${\mathcal C}^\infty_c(\Omega_r)$
and let $f$  
be its Green potential on $\Omega_r\,$, that is,
\begin{equation}\label{eq:Greenpot}
f(\mathfrak{w}) = \int_{\Omega_r} \mathbf{g}^{\Omega_r}(\mathfrak{w},\mathfrak{z})\,\varphi(\mathfrak{z})\, d\mathbf{m}(\mathfrak{z})\,.
\end{equation}
Then $f$ is a weak solution of the equation 
$$
\Delta_{\alpha,\beta}f = -\varphi \quad \text{in}\; \Omega_r\,.
$$

Smoothness of the heat kernel $\mathbf{h}_{\alpha,\beta}(t,\mathfrak{w},\mathfrak{z})$ associated with 
$\Delta_{\alpha,\beta}$ (see \cite[Theorem 5.23 and Appendix 1]{BSSW1})
implies as in the classical theory of PDE that $f$ is a strong solution
of the abvove Laplace equation, that is, $f \in D^2_{\alpha,\beta}(\overline\Omega_r)$
(indeed, it has all higher order derivatives).
We have $f = 0$ on $\partial \Omega_r$.

Inserting $f$ and $h$ into Green's second identity of Proposition 
\ref{pro:green},
we get with an application of Fubini's theorem
$$
\begin{aligned} \int_{\Omega_r} h(\mathfrak{w})\,&\varphi(\mathfrak{w}) \, d\mathbf{m} 
= - \int_{\partial \Omega_r} \Bigl(\mathsf{n}(\mathfrak{z}), \nabla_{\mathfrak{z}}^{\partial \Omega_r}f(\mathfrak{z})
\Bigr)\, h(\mathfrak{z})  \,d\mathbf{m}^{\partial \Omega_r}(\mathfrak{z})\\
&= \int_{\partial \Omega_r} \int_{\Omega_r} 
\Bigl(\mathsf{n}(\mathfrak{z}),\nabla_{\mathfrak{z}}^{\partial \Omega_r} \mathbf{g}^{\Omega_r}(\mathfrak{w},\mathfrak{z}) 
\,\varphi(\mathfrak{z})\Bigr)
\,h(\mathfrak{z})\,d\mathbf{m}(\mathfrak{w})  \, d\mathbf{m}^{\partial \Omega_r}(\mathfrak{z})\\
&= \int_{\Omega_r} \left(
\int_{\partial \Omega_r} \Pi^{\Omega_r}(\mathfrak{w},\mathfrak{z}) h(\mathfrak{z})
\,d\mathbf{m}^{\partial \Omega_r}(\mathfrak{z}) \right) \varphi(\mathfrak{w})
\, d\mathbf{m}(\mathfrak{w})\,.
\end{aligned}
$$    
Since this holds for any continuous function $\varphi$ as specified above, 
the statement follows.
\end{proof}

Let $f$ be a continuous function on $\partial \Omega_r\,$.
Then
$$
h(\mathfrak{w}) = \int_{\partial \Omega_r} f\, d\mu_{\mathfrak{w}}^{\Omega_r} =
\mathsf{E}_{\mathfrak{w}}\Bigl(f(X_{\tau^{\Omega_r}})\Bigr)
$$
is the unique solution of the Dirichlet problem with boundary data $f$.
By Lemma \ref{lem:poiss},
$$
h(\mathfrak{w}) = \int_{\partial \Omega_r} \Pi^{\Omega_r}(\mathfrak{w},\mathfrak{z}) f(\mathfrak{z}) \, 
d\mathbf{m}^{\partial \Omega_r}(\mathfrak{z})\,.
$$
\begin{cor}\label{cor:dens} For any $\mathfrak{w} \in \Omega_r\,$, the Poisson kernel 
$\Pi^{\Omega_r}(\mathfrak{w},\cdot)$ is the density of the hitting ($\equiv$ exit) distribution
$\mu_{\mathfrak{w}}^{\Omega_r}$ with respect to the boundary measure 
$\mathbf{m}^{\partial \Omega_r}$.
\end{cor}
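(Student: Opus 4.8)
The plan is to deduce the corollary directly from the two displayed identities that precede it, combined with the uniqueness part of the Riesz--Markov representation theorem. Fix $\mathfrak{w} \in \Omega_r$. Since $\Omega_r$ is relatively compact and our Brownian motion has infinite lifetime and continuous sample paths, $\tau^{\Omega_r} < \infty$ almost surely, so $\mu_{\mathfrak{w}}^{\Omega_r}$ is a probability measure supported on the compact set $\partial \Omega_r$. On the other hand, define the finite Borel measure
$$
\nu(B) = \int_B \Pi^{\Omega_r}(\mathfrak{w},\mathfrak{z})\, d\mathbf{m}^{\partial \Omega_r}(\mathfrak{z}), \qquad B \subset \partial \Omega_r \text{ Borel};
$$
this is well defined, nonnegative and finite because $\Pi^{\Omega_r}(\mathfrak{w},\cdot) \ge 0$ (noted after \eqref{eq:Poiss}) and $\mathbf{m}^{\partial \Omega_r}$ is a finite measure on $\partial \Omega_r$ (on the compact boundary both $\phi_{\alpha,\beta}$ and $1/y$ are bounded). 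The goal is to show $\mu_{\mathfrak{w}}^{\Omega_r} = \nu$.

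The key step is to check that $\int f\, d\mu_{\mathfrak{w}}^{\Omega_r} = \int f\, d\nu$ for every $f \in {\mathcal C}(\partial \Omega_r)$. Given such an $f$, set $h(\mathfrak{w}') = \mathsf{E}_{\mathfrak{w}'}\bigl(f(X_{\tau^{\Omega_r}})\bigr) = \int_{\partial \Omega_r} f\, d\mu_{\mathfrak{w}'}^{\Omega_r}$, the probabilistic solution of the Dirichlet problem on $\Omega_r$ with boundary data $f$. Then $h$ is harmonic on $\Omega_r$ in the sense of Definition~\ref{def:harmonic}, so by Proposition~\ref{pro:harmonic} it lies in ${\mathcal C}^\infty(\Omega_r)$, satisfies the bifurcation condition \eqref{eq:bif} along $\mathsf{L}_o$, and is annihilated by $\Delta_{\alpha,\beta}$ in each open rectangle making up $\Omega_r$. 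Because $\Omega_r$ is a finite union of hyperbolic ``rectangles'' meeting only along full sides, every boundary point of $\Omega_r$ (corners included) admits a barrier and is therefore regular for the Dirichlet problem, so $h$ extends continuously to $\overline{\Omega}_r$ with $h|_{\partial \Omega_r} = f$; elliptic boundary regularity (as in Evans \cite[Thm.~6, p.~326]{Eva}, with only boundedness of the derivatives claimed at the corners) together with the smoothness of the heat kernel then yields $h \in \mathcal D^2_{\alpha,\beta}(\overline{\Omega}_r)$. Hence Lemma~\ref{lem:poiss} applies and gives
$$
\int_{\partial \Omega_r} f\, d\mu_{\mathfrak{w}}^{\Omega_r} = h(\mathfrak{w}) = \int_{\partial \Omega_r} \Pi^{\Omega_r}(\mathfrak{w},\mathfrak{z})\, f(\mathfrak{z})\, d\mathbf{m}^{\partial \Omega_r}(\mathfrak{z}) = \int_{\partial \Omega_r} f\, d\nu.
$$

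Since $\partial \Omega_r$ is a compact metric space, two finite Borel measures that integrate every continuous function to the same value must coincide; thus $\mu_{\mathfrak{w}}^{\Omega_r} = \nu$, which is precisely the assertion that $\Pi^{\Omega_r}(\mathfrak{w},\cdot)$ is the density of $\mu_{\mathfrak{w}}^{\Omega_r}$ with respect to $\mathbf{m}^{\partial \Omega_r}$. I expect the only genuinely delicate point to be the regularity claim $h \in \mathcal D^2_{\alpha,\beta}(\overline{\Omega}_r)$ --- in particular the continuity of $h$ and the boundedness of its second-order derivatives near the corners of $\Omega_r$ and across the interior bifurcation segment $\mathsf{L}_o$ --- which forces one to invoke the elliptic up-to-the-boundary regularity theory rather than a purely probabilistic argument; the remaining steps are routine.
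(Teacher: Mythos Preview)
Your argument is correct and follows essentially the same route as the paper: for each $f \in {\mathcal C}(\partial\Omega_r)$ one identifies the probabilistic Dirichlet solution with the Poisson integral via Lemma~\ref{lem:poiss}, and then concludes by Riesz--Markov uniqueness. You have been more explicit than the paper about the one nontrivial point---that the probabilistic solution $h$ actually lies in $\mathcal D^2_{\alpha,\beta}(\overline{\Omega}_r)$ so that Lemma~\ref{lem:poiss} applies---and your justification via Proposition~\ref{pro:harmonic} together with elliptic up-to-the-boundary regularity is the right one.
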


We want to have an analogous 
result for the non-compact set $\Omega = \Omega_o\,$. 
In spite of being ``obvious'', this requires 
further substantial work. 

\begin{lem}\label{lem:measure1} For any $\mathfrak{w} \in \Omega$ and
any $r > |\operatorname{\text{\sl \!Re}} \mathfrak{w}|$,
$$
\mu_{\mathfrak{w}}^{\Omega_r}  \bigl( \partial^{\text{\rm vert}}\Omega_r \bigr) \le 
\mathsf{Pr}_{\mathfrak{w}} \bigl[ \max \{ |\operatorname{\text{\sl \!Re}} X_t| : t \le \tau^{\Omega}\} > r\bigr] \le
2\rho^{r - |\operatorname{\text{\sl \!Re}} \mathfrak{w}|-1}  \,,
$$
where $\rho <1$. 
\end{lem}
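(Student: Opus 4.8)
The plan is to reduce the estimate to a one‑dimensional statement about the horizontal coordinate $\operatorname{\text{\sl \!Re}} X_t$. The first inequality is essentially a definition‑unwinding: if Brownian motion started at $\mathfrak{w}$ exits $\Omega_r$ through the vertical part $\partial^{\text{\rm vert}}\Omega_r$, then it has reached a point with $|\operatorname{\text{\sl \!Re}} z| = r$ before leaving $\Omega$ (since $\Omega_r \subset \Omega$ and the two sets share the same horizontal boundary $\partial^{\text{\rm hor}}\Omega_r \subset \mathsf{L}_o \cup \bigcup_{v \in \partial^+N(o)} \mathsf{L}_v$ with $\operatorname{\text{\sl \!Re}}$ unrestricted), so the event $\{X_{\tau^{\Omega_r}} \in \partial^{\text{\rm vert}}\Omega_r\}$ is contained in $\{\max_{t \le \tau^\Omega}|\operatorname{\text{\sl \!Re}} X_t| > r\}$ — more precisely $\ge r$, and one passes to strict inequality by noting that hitting $\operatorname{\text{\sl \!Re}} = r$ exactly forces the running max to be $\ge r$, and then uses that the running max process has no atoms / applies the estimate with $r-\varepsilon$ and lets $\varepsilon \downarrow 0$. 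The clean way is to prove the stronger bound $\mathsf{Pr}_{\mathfrak{w}}[\max\{|\operatorname{\text{\sl \!Re}} X_t| : t \le \tau^\Omega\} \ge r] \le 2\rho^{r-|\operatorname{\text{\sl \!Re}}\mathfrak{w}|-1}$ and absorb the $-1$ in the exponent to handle the difference between $\ge$ and $>$.

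For the second inequality, the key point is to understand the process $R_t = \operatorname{\text{\sl \!Re}} X_t = \operatorname{\text{\sl \!Re}} Z_t$, where $Z_t = \pi^{\mathbb H}(X_t)$ is the $\Delta^{\mathbb H}_{\alpha,\beta\mathsf{p}}$‑diffusion on ${\mathbb H}(\mathsf{q})$ by Proposition~\ref{pro:projections}(a). Writing $Z_t = X_t^{(1)} + \mathfrak{i}\, X_t^{(2)}$ in the interior of a strip, the generator acts on the real part merely as $y^2 \partial_x^2$ (no drift in the $x$‑direction, and the bifurcation conditions at the lines $\mathsf{L}_k$ are purely vertical), so $R_t$ is, up to the random time change $A_t = \int_0^t (X_s^{(2)})^2\,ds$, a standard one‑dimensional Brownian motion: $R_t = B_{A_t}$ for a Brownian motion $B$. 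Crucially, while $\Omega$ is unbounded horizontally, it \emph{is} bounded vertically — $\overline\Omega$ is the union of the finitely many closed strips adjacent to $\mathsf{L}_o$, so on $\overline\Omega$ we have $y \le \mathsf{q}^{h(o)+1}$, hence $A_{\tau^\Omega} \le \mathsf{q}^{2(h(o)+1)}\,\tau^\Omega$. I would use this to dominate $\max_{t \le \tau^\Omega}|R_t - \operatorname{\text{\sl \!Re}}\mathfrak{w}|$ by $\max_{s \le c\,\tau^\Omega}|B_s|$ with $c = \mathsf{q}^{2(h(o)+1)}$.

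The remaining ingredient is the finite exponential moment of the exit time from $\Omega$, which in turn comes from the exponential moment of $\tau$ recorded in \eqref{eq:rha}: since $\overline\Omega$ consists of strips adjacent to a single bifurcation line, exiting $\Omega$ happens no later than the second visit of the vertical process $(Y_t)$ to ${\mathbb Z}$ after entering, so $\tau^\Omega \le \tau(2)$ (with an appropriate shift), which has a finite exponential moment $\mathsf{E}_{\mathfrak{w}}(e^{\lambda_0 \tau^\Omega}) < \infty$ for some $\lambda_0 > 0$, uniformly over $\mathfrak{w} \in \Omega$ by group invariance \eqref{eq:inv-meas}. Now combine: by the reflection principle, $\mathsf{Pr}[\max_{s \le c\tau^\Omega}|B_s| \ge a] \le 4\,\mathsf{Pr}[B_{c\tau^\Omega} \ge a]$, and conditioning on $\tau^\Omega$ and using a standard Gaussian tail together with the exponential moment bound (optimising the usual $e^{-a^2/2\sigma^2}$ against $\mathsf{E}(e^{\lambda_0 c \tau^\Omega})$ via $\mathsf{Pr}[\sigma^2 \ge s] \le e^{\lambda_0 c}\,e^{-\lambda_0 s}$ applied to $\sigma^2 = c\tau^\Omega$, or more simply a union‑type argument splitting on whether $\tau^\Omega$ exceeds a threshold) yields a bound of the form $C e^{-\kappa a}$ for $a = r - |\operatorname{\text{\sl \!Re}}\mathfrak{w}|$, with constants independent of $\mathfrak{w}$ and $r$. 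Choosing $\rho = e^{-\kappa}$ and adjusting the constant to the stated form $2\rho^{r-|\operatorname{\text{\sl \!Re}}\mathfrak{w}|-1}$ completes the proof. The main obstacle is the second step: verifying rigorously that the horizontal coordinate is a time‑changed Brownian motion despite the singular bifurcation structure — i.e. that the bifurcation conditions genuinely do not affect $\operatorname{\text{\sl \!Re}} X_t$ — and controlling the time change uniformly; once that is in hand, everything else is a routine reflection‑principle plus exponential‑moment estimate.
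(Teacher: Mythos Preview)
Your approach is correct but differs substantially from the paper's. The paper does not use the representation of $\operatorname{\text{\sl \!Re}} X_t$ as a time-changed Brownian motion at all. Instead it runs an iterated barrier-crossing argument: it defines $\rho$ concretely as $\rho = 1 - \min_{\mathfrak{z} \in N_0(o)} \mathsf{Pr}_{\mathfrak{z}}[X_{\sigma(1)} \in \partial^{\text{\rm hor}}\Omega_1]$, where $\sigma(1)$ is the exit time from $\Omega_1$. This minimum is positive because the function is harmonic and continuous on a compact set. Then, by horizontal translation invariance, from any point on a vertical slice $N_{k-1}(o)$ the probability of reaching $N_k(o)$ before exiting $\Omega$ through the horizontal boundary is at most $\rho$; the strong Markov property and induction give $\rho^{\lfloor r \rfloor}$ for the probability of traversing $n=\lfloor r\rfloor$ consecutive unit barriers, yielding $2\rho^{r-1}$ after accounting for both signs and then shifting for general $\operatorname{\text{\sl \!Re}}\mathfrak{w}$.

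What each approach buys: the paper's argument is entirely soft and avoids any stochastic-calculus justification at the bifurcation lines --- the only inputs are continuity of a harmonic function, compactness of $N_0(o)$, and the group action. Your approach gives more explicit analytic control (in principle one could extract the constant $\kappa$ from $\lambda_0$ and $\mathsf{q}$), but it rests on verifying the Dambis--Dubins--Schwarz representation for $\operatorname{\text{\sl \!Re}} X_t$ across bifurcations and on the conditional-independence structure between $B$ and $A_{\tau^\Omega}$ (equivalently, that $\tau^\Omega$ is measurable with respect to the vertical process, which does follow from \eqref{eq:sig}). That verification is correct but is exactly the delicate point the paper's barrier argument is designed to sidestep.
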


\begin{proof} The first inequality is clear: if up to the exit time from the entire
set $\Omega$ we have $|\operatorname{\text{\sl \!Re}} X_t| \le r$
then $X_t$ cannot exit $\Omega_r$ through its vertical boundary part.
For the second inequality, the arguments are those
used in the proof of \cite[Proposition 4.18]{BSSW2}. In particular, see
\cite[Figure 7]{BSSW2}. The number $\rho$ is as in that proposition. The result there
is formulated in terms of the projected process on ${\mathbb H}(\mathsf{q})$, and we add some
brief reminders in terms of ${\mathsf{HT}}$ itself.

We ``slice'' $\Omega$ vertically by the sets $N_{\pm k}(o)$, $k \in {\mathbb N}$, and let $\sigma(k)$ be 
the exit time of our Brownian motion from $\Omega_k$, while $\tau^{\Omega}$ coincides with 
the $\sigma$ of \cite[(4.3)]{BSSW2}: with respect to
the projection $Y_t = \pi^{\mathbb{R}}(X_t)$ on the real line,
\begin{equation}\label{eq:sig}
\begin{gathered}
\tau^{\Omega} = \inf\{ t > 0 : Y_t \notin [-1, 1]\}
= \inf\{ t \ge 0 : Y_t = \pm 1\}\,, \\ 
\text{where} \quad Y_0 \in [-1\,,\,1],
\end{gathered}
\end{equation}
which is a.s. finite and has an exponential moment. We also note that $\sigma(k) \le \tau^{\Omega}$.

The function 
$\mathfrak{z} \mapsto \mathsf{Pr}_{\mathfrak{z}}[X_{\sigma(1)} \in \partial^{\text{\rm hor}} \Omega_1]$ is strictly 
positive and weakly harmonic
($\equiv$ in the sense of distributions) on $\Omega_1\,$, whence
strongly harmonic by \cite[Theorem 5.9]{BSSW1}, and consequently continuous.  
Along $N_0(o)$, it must attain its minimum  in some 
$\mathfrak{z}_* \in N_0(o) \setminus \partial \Omega_1\,$. Then
$$
\rho = 1 - \mathsf{Pr}_{\mathfrak{z}_*}[X_{\sigma(1)} \in \partial^{\text{\rm hor}} \Omega_1].
$$
Now let us assume first that $\mathfrak{w} \in \Omega$ is such that $\operatorname{\text{\sl \!Re}} \mathfrak{w} = 0$. 
If $X_t$ starts at $\mathfrak{w}$ and $\max \{ \operatorname{\text{\sl \!Re}} X_t : t \le \tau^{\Omega} \} > r$, 
then it must
pass through all the vertical ``barriers'' $N_1(o)\,,\cdots, N_n(o)$, where $n = \lfloor r \rfloor$. 
But by group-invariance (just using horizontal translations here), 
for any starting point in $N_{k-1}(o)$, the probability that $X_t$ reaches 
$N_k(o) \setminus \partial \Omega$ before $\partial \Omega$ is bounded above
by $\rho$. By a simple inductive argument as in 
in the proof of \cite[Proposition 4.18]{BSSW2}, the probability to pass through all those
$N_k(o)$ before exiting $\Omega$ is $\le \rho^n \le \rho^{r - 1}$. 

In the same way, $\mathsf{Pr}_{\mathfrak{w}} \bigl[\min \{ \operatorname{\text{\sl \!Re}} X_t : t \le \tau^{\Omega} \} 
< -r\bigr] \le \rho^{r - 1} $.
Thus, the stated upper bound of the second inequality holds when $\operatorname{\text{\sl \!Re}} \mathfrak{w} = 0$. 

If $\mathfrak{w} \in \Omega$ is arbitrary, then we can use group invariance and map $\mathfrak{w}$ to a point $(0,w_0)$ 
by a horizontal translation of ${\mathsf{HT}}$. Then we must replace $\Omega_r$ by $\Omega_{r-|\operatorname{\text{\sl \!Re}} \mathfrak{w}|}\,$ in
the preceding arguments.
\end{proof}

Since $\tau^{\Omega} = \sigma$ is a.s. finite, also the set $\Omega$ has a Dirichlet
Green kernel $\mathbf{g}^{\Omega}(\cdot, \mathfrak{w})$ which for $\mathfrak{w} \in \Omega$ belongs to 
$\mathcal D^2_{\alpha,\beta}(\overline\Omega \setminus U)$ for every ``nicely shaped''
open neighbourhood $U$ of its pole $\mathfrak{w}$. Here,
the space $\mathcal D^2_{\alpha,\beta}(\overline\Omega)$ is as
in Definition \eqref{def:D2}, except that we do not have to refer to the corners,
and boundedness of partial derivatives is local, i.e., it
refers to arbitrary relatively compact subsets of any of the strips that make
up $\Omega$.

The following is another relative of \cite[Proposition 4.18]{BSSW2} and the above  Lemma
\ref{lem:measure1}.

\begin{lem}\label{lem:zero} For any $\mathfrak{w} \in \Omega$,
$$
\mathbf{g}^{\Omega}(\mathfrak{z}, \mathfrak{w}) \to 0 \quad\text{as }\; d(\mathfrak{z},\mathfrak{w}) \to \infty\,,\;
\mathfrak{z} \in \Omega\,.
$$
\end{lem}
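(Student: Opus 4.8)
The plan is to combine the decay of the exit distribution established in Lemma \ref{lem:measure1} with a comparison between the Green kernel on $\Omega$ and the Green kernels on the truncated sets $\Omega_r$. Fix $\mathfrak{w} \in \Omega$ and set $\mathfrak{z}$ to be a far-away point in $\Omega$; since $\Omega$ is a union of finitely many strips, each geometrically a copy of (a piece of) the hyperbolic plane, the only way to have $d(\mathfrak{z},\mathfrak{w}) \to \infty$ within $\Omega$ is for $|\operatorname{\text{\sl \!Re}} \mathfrak{z}| \to \infty$, because the cross-sectional tree $N(o)$ is compact and the imaginary part of $z$ stays in a bounded range of scales along each bounded horocyclic slice; I would make this geometric remark precise first using the metric formula \eqref{eq:metric} together with \eqref{eq:RE-IM}. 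So it suffices to show $\mathbf{g}^{\Omega}(\mathfrak{z},\mathfrak{w}) \to 0$ as $|\operatorname{\text{\sl \!Re}} \mathfrak{z}| \to \infty$.

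Next I would fix $r_0 > |\operatorname{\text{\sl \!Re}} \mathfrak{w}|$ large enough that $\mathfrak{w} \in \Omega_{r_0}$, and, for $\mathfrak{z}$ with $|\operatorname{\text{\sl \!Re}} \mathfrak{z}|$ very large, choose $r$ with $|\operatorname{\text{\sl \!Re}} \mathfrak{w}| < r < |\operatorname{\text{\sl \!Re}} \mathfrak{z}|$. The standard probabilistic identity for Green kernels gives, via the strong Markov property at the exit time $\tau^{\Omega_r}$,
$$
\mathbf{g}^{\Omega}(\mathfrak{z},\mathfrak{w}) = \int_{\partial \Omega_r} \mathbf{g}^{\Omega}(\mathfrak{z}',\mathfrak{w}) \, d\mu^{\Omega_r}_{\mathfrak{w}}(\mathfrak{z}')\,,
$$
valid because $\mathfrak{z}$ lies outside $\overline\Omega_r$, so $\mathbf{g}^{\Omega}(\cdot,\mathfrak{w})$ is harmonic on $\Omega_r$ (it has its pole at $\mathfrak{w}$, but wait — the pole is inside $\Omega_r$; I must instead write the identity the other way, fixing the pole and varying the first argument). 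More carefully: since $\mathbf{g}^{\Omega}(\cdot,\mathfrak{z})$ is harmonic on $\Omega \setminus \{\mathfrak{z}\}$ and $\mathfrak{w} \in \Omega_r$ with $\mathfrak{z} \notin \overline\Omega_r$, harmonicity on $\Omega_r$ and the Poisson representation of Lemma \ref{lem:poiss} (or Definition \ref{def:harmonic} applied to $\Omega_r$) yield
$$
\mathbf{g}^{\Omega}(\mathfrak{w},\mathfrak{z}) = \int_{\partial \Omega_r} \mathbf{g}^{\Omega}(\mathfrak{z}',\mathfrak{z}) \, d\mu^{\Omega_r}_{\mathfrak{w}}(\mathfrak{z}')\,,
$$
and by symmetry of the Green kernel I may read the left side as $\mathbf{g}^{\Omega}(\mathfrak{z},\mathfrak{w})$. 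Now split $\partial\Omega_r$ into its vertical part $\partial^{\text{\rm vert}}\Omega_r$ and its horizontal part $\partial^{\text{\rm hor}}\Omega_r \subset \operatorname{\sf LT}$. On the horizontal part, $\mathbf{g}^{\Omega}(\cdot,\mathfrak{z})$ vanishes (it is the Dirichlet Green kernel and $\partial^{\text{\rm hor}}\Omega_r \subset \partial\Omega$), so only the integral over $\partial^{\text{\rm vert}}\Omega_r$ survives. On $\partial^{\text{\rm vert}}\Omega_r = N_r(o)\cup N_{-r}(o)$ I bound $\mathbf{g}^{\Omega}(\mathfrak{z}',\mathfrak{z})$ crudely: since $\mathfrak{z}'$ ranges over the compact set $N_{\pm r}(o)$ and, for the purposes of an upper bound, I can use group invariance (horizontal translation by $\mp r$) to reduce to $\mathfrak{z}'$ in the fixed compact slice $N_0(o)$ and $\mathfrak{z}$ shifted accordingly — the point being that $\mathbf{g}^{\Omega}$ restricted to a neighbourhood of the compact set $N_0(o)$, with pole at a uniformly bounded distance, is bounded by a constant $C$ independent of the shift (here I use continuity of $\mathbf{g}^{\Omega}$ away from the diagonal together with $\mathcal{A}$-invariance \eqref{eq:inv-meas} and the fact that $|\operatorname{\text{\sl \!Re}}\mathfrak{z}| - r$ stays, say, $\ge 1$). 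Hence
$$
\mathbf{g}^{\Omega}(\mathfrak{z},\mathfrak{w}) \le C \cdot \mu^{\Omega_r}_{\mathfrak{w}}\bigl(\partial^{\text{\rm vert}}\Omega_r\bigr) \le 2C\,\rho^{\,r - |\operatorname{\text{\sl \!Re}}\mathfrak{w}| - 1}
$$
by Lemma \ref{lem:measure1}, with $\rho < 1$. Choosing $r = \lfloor |\operatorname{\text{\sl \!Re}}\mathfrak{z}|\rfloor - 1$ (say), the right-hand side tends to $0$ as $|\operatorname{\text{\sl \!Re}}\mathfrak{z}| \to \infty$, which finishes the argument.

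The main obstacle I anticipate is the uniform bound $\mathbf{g}^{\Omega}(\mathfrak{z}',\mathfrak{z}) \le C$ for $\mathfrak{z}'$ on the moving vertical barrier and $\mathfrak{z}$ a definite distance beyond it: one must be careful that $\mathfrak{z}$ is not allowed to approach $\mathfrak{z}'$ (else the Green kernel blows up), and that the constant genuinely does not depend on how far out we are. This is handled by insisting $r \le |\operatorname{\text{\sl \!Re}}\mathfrak{z}| - 1$ (keeping a unit buffer) and invoking the horizontal-translation invariance to freeze the geometry: $\mathbf{g}^{\Omega}\bigl((z'+\mathfrak{i} 0,w'),(z,w)\bigr)$ depends only on the translated configuration, and on the compact region $\{\,\operatorname{\text{\sl \!Re}}\in[0,1],\ w\in N(o)\,\}\times\{\,\operatorname{\text{\sl \!Re}}\ge 1\,\}$ (after translation) the Green kernel is continuous, hence bounded on the relevant compact-times-closed set once we also note $\mathbf{g}^{\Omega}(\cdot,\mathfrak{z})\to 0$ at infinity in the second variable too is \emph{not} yet available — so instead I use that $\mathbf{g}^{\Omega} \le \mathbf{g}^{{\mathbb H}_\xi}$ or a global comparison, or simply that for $\mathfrak{z}$ beyond the barrier the supremum of $\mathbf{g}^{\Omega}(\cdot,\mathfrak{z})$ over the barrier is non-increasing in $|\operatorname{\text{\sl \!Re}}\mathfrak{z}|$ (again by the maximum principle, comparing $\mathfrak{z}$ at distance $1$ versus farther away). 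Any one of these gives the required uniform $C$; the cleanest is the monotonicity via the maximum principle, which I would spell out in half a line.
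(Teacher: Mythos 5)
Your proposal is essentially the paper's own proof: the balayage identity \eqref{eq:bala} expresses $\mathbf{g}^{\Omega}(\mathfrak{w},\mathfrak{z})$ (for $\mathfrak{z}\notin\Omega_r$) as an integral of $\mathbf{g}^{\Omega}(\cdot,\mathfrak{z})$ over $\partial^{\text{\rm vert}}\Omega_r$ against $\mu_{\mathfrak{w}}^{\Omega_r}$, the horizontal boundary contributes nothing because the Dirichlet Green kernel vanishes there, and Lemma~\ref{lem:measure1} makes $\mu_{\mathfrak{w}}^{\Omega_r}\bigl(\partial^{\text{\rm vert}}\Omega_r\bigr)$ exponentially small. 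What you add, and the paper passes over in silence, is the need for a uniform bound on $\sup\{\mathbf{g}^{\Omega}(\mathfrak{u},\mathfrak{z}):\mathfrak{u}\in\partial^{\text{\rm vert}}\Omega_r\}$ as $r$ and $\mathfrak{z}$ move out together; that is a real point and worth making explicit. Of the remedies you list, however, the comparison $\mathbf{g}^{\Omega}\le\mathbf{g}^{{\mathbb H}_\xi}$ does not work when $\mathsf{p}\ge 2$: there is no domain inclusion between $\Omega=\Omega_o$ (which branches at $\mathsf{L}_o$) and a slice ${\mathbb H}_\xi$, and the process on ${\mathsf{HT}}$ does not stay in ${\mathbb H}_\xi$ anyway, so no Green-kernel monotonicity is available. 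Your ``maximum principle monotonicity'' also needs a Phragm\'en--Lindel\"of type justification on the unbounded domain $\Omega\cap\{\operatorname{\text{\sl \!Re}}\mathfrak{z}>1\}$ and is not a half-line remark. The remedy that does go through cleanly is the one you call a ``global comparison'': dominate $\mathbf{g}^{\Omega}(\mathfrak{u},\mathfrak{z})$ by the global Green kernel $\mathbf{g}(\mathfrak{u},\mathfrak{z})$ on ${\mathsf{HT}}$, use horizontal $\mathcal{A}$-invariance (which preserves $\phi_{\alpha,\beta}$, hence $\mathbf{g}$) to reduce to $\mathfrak{u}\in N_0(o)$ and $\operatorname{\text{\sl \!Re}}\mathfrak{z}\ge 1$, and then invoke the heat kernel upper bounds of \cite{BSSW1} to obtain $\mathbf{g}(\mathfrak{u},\mathfrak{z})\le C$ whenever $\mathsf{d}_{{\mathsf{HT}}}(\mathfrak{u},\mathfrak{z})\ge 1$. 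With that choice of $C$, your estimate coincides with the paper's.
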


\begin{proof}
We use classical \emph{balayage}, see {\sc Dynkin}~\cite[(13.82) in \S 13.25]{Dy}
and {\sc Blumenthal and Getoor}~\cite[Theorem (1.16) on p. 261]{BlGe}.
We have for all $\mathfrak{z}, \mathfrak{w} \in \Omega$ and
$r > |\operatorname{\text{\sl \!Re}} \mathfrak{w}|$
\begin{equation}\label{eq:bala}
\mathbf{g}^{\Omega}(\mathfrak{w}, \mathfrak{z}) - \mathbf{g}^{\Omega_r}(\mathfrak{w}, \mathfrak{z}) = 
\int_{\partial \Omega_r}  \mathbf{g}^{\Omega}(\cdot, \mathfrak{z})\, 
d\mu_{\mathfrak{w}}^{\Omega_r} 
= \int_{\partial^{\text{\rm vert}} \Omega_r} \mathbf{g}^{\Omega}(\cdot, \mathfrak{z})\,
d\mu_{\mathfrak{w}}^{\Omega_r} \,,
\end{equation}
where the last equality holds because $\mathbf{g}^{\Omega}(\cdot, \mathfrak{z}) =0$ 
on $\partial^{\text{\rm hor}} \Omega_{r}\,$.
Of course, we have $\mathbf{g}^{\Omega_r}(\mathfrak{w}, \mathfrak{z}) = 0$ unless also
$\mathfrak{z} \in \Omega_r\,$.

For fixed $\mathfrak{w}$, when $d(\mathfrak{z},\mathfrak{w})$ is sufficiently
large,  there is $r = r(\mathfrak{z})$ such that $\mathfrak{w} \in \Omega_r$ and 
$\mathfrak{z} \in \Omega \setminus \Omega_r\,$, and $r(\mathfrak{z}) \to \infty$ when
$d(\mathfrak{z},\mathfrak{w}) \to \infty\,$.   
Then
$$
\mathbf{g}^{\Omega}(\mathfrak{z}, \mathfrak{w}) = \mathbf{g}^{\Omega}(\mathfrak{w}, \mathfrak{z}) \le 
\mu_{\mathfrak{w}}^{\Omega_{r}} \bigl( \partial^{\text{\rm vert}}\Omega_r\bigr) \times
\sup \{ \mathbf{g}^{\Omega}(\mathfrak{u}, \mathfrak{z}) : \mathfrak{u} \in \partial^{\text{\rm vert}} \Omega_r\}. 
$$
By Lemma \ref{lem:measure1}, this tends to $0$, as $r = r(\mathfrak{z}) \to \infty\,$.
\end{proof}

We can now define the Poisson kernel $\Pi^{\Omega}(\mathfrak{w},\mathfrak{z})$
as in \eqref{eq:Poiss}. It is positive and supported by $\partial \Omega\,$.
We have $\Pi^{\Omega}(\mathfrak{w},\mathfrak{z}) = \partial_y \mathbf{g}^{\Omega}(\mathfrak{z}, \mathfrak{w})$
on $\mathsf{L}_{o^-}$ and 
$\Pi^{\Omega}(\mathfrak{w},\mathfrak{z}) = -\partial_y \mathbf{g}^{\Omega}(\mathfrak{z}, \mathfrak{w})$
on each $\mathsf{L}_v$ with $v^- = o$. (The partial deriviative $\partial_y$ refers
to the $y$-coordinate of $\mathfrak{z} = x + \mathfrak{i}\, y$.)

\begin{pro}\label{pro:one}\hspace{1cm}
$
\int_{\partial \Omega} \Pi^{\Omega}(\mathfrak{w},\mathfrak{z})\, d\mathbf{m}^{\partial \Omega} = 1\,.
$
\end{pro}

\begin{proof}
For $r > 0$, we use Green's second identity of Proposition 
\ref{pro:green} on $\Omega_r\,$. We again let $\varphi$ be any  non-negative 
function in ${\mathcal C}^\infty_c(\Omega_r)$. We use the Green potential 
of $\varphi$ as in \eqref{eq:Greenpot}, but this time with respect to the whole 
of $\Omega$ instead  of $\Omega_r\,$. Furthermore, we let $h \equiv 1$.
We have $f = 0$ on $\partial \Omega$, in particular on $\partial^{\text{\rm hor}}\Omega_r\,$,
and $\Delta_{\alpha,\beta}f = -\varphi$ on $\Omega$. Thus, we get
$$
\int_{\Omega_r} \varphi(\mathfrak{w}) \, d\mathbf{m} 
= - \int_{\partial \Omega_r} 
\Bigl(\mathsf{n}^{\Omega_r}(\mathfrak{z})\,,\, \nabla_{\mathfrak{z}}^{\partial \Omega_r}f(\mathfrak{z})\Bigr)\,
d\mathbf{m}^{\partial \Omega_r}(\mathfrak{z}).
$$
We fix $\mathfrak{w} \in \Omega_r\,$. Continuing as in the proof of Lemma 
\ref{lem:poiss}, we see that
$$
\begin{aligned} 1 &= -\int_{\partial \Omega_r}  
\Bigl(\mathsf{n}^{\Omega_r}(\mathfrak{z})\,,\, \nabla_{\mathfrak{z}}^{\partial\Omega_r}
\mathbf{g}^{\Omega}(\mathfrak{z}, \mathfrak{w})\Bigr)\,d\mathbf{m}^{\partial \Omega_r}(\mathfrak{z})\\ 
&= \int_{\partial^{\text{\rm hor}} \Omega_r}\Pi^{\Omega}(\mathfrak{w},\mathfrak{z})\, 
d\mathbf{m}^{\partial \Omega_r}(\mathfrak{z}) \\
&\quad + \int_{N_{-r}} y^2 \partial_x \mathbf{g}^{\Omega}(\mathfrak{z}, \mathfrak{w}) \, 
d\mathbf{m}^{\partial \Omega_r}(\mathfrak{z})
+ \int_{N_r} -y^2 \partial_x \mathbf{g}^{\Omega}(\mathfrak{z}, \mathfrak{w}) \, 
d\mathbf{m}^{\partial \Omega_r}(\mathfrak{z})\,,
\end{aligned}$$
where $N_{-r}=N_{-r}(o)$ and $N_r=N_r(o)$ are the left and right hand vertical
boundary parts of $\Omega_r\,$.

Since we have fixed $\mathfrak{w}$, we can write
$\mathbf{g}^{\Omega}(\mathfrak{z}, \mathfrak{w}) = g(\mathfrak{z}) = g(x+\mathfrak{i}\, y, w)$, when 
$\mathfrak{z} = (x+\mathfrak{i}\, y, w)$ with $w \in T_v\,$. 
We observe that on $N_r$ for arbitrary $r \in \mathbb{R}$,  
$$
d\mathbf{m}^{\Omega_r}(\mathfrak{z}) 
= \underbrace{\beta^{\lceil \log_{\mathsf{q}}(y)\rceil} y^{\alpha}}_{\textstyle=:\phi(y)}
\,dy\,,\quad \text
{where}\quad \mathfrak{z} = (r + \mathfrak{i}\, y, w) \in N_r\,.
$$ 
The derivative $\partial_x$ refers to the $x$-coordinate of $\mathfrak{z}$. We can
consider the last measure, as well as the next integral below, as a measure and
integral on the neighbourhood star $N(o)$ in ${\mathbb T}$, because $N_r$ is an isometric
copy of that neighbourhood star.

Thus, for $r \in \mathbb{R}$, we can write 
$$
\int_{N_r} y^2 \partial_x \mathbf{g}^{\Omega}(\mathfrak{z}, \mathfrak{w}) \, 
d\mathbf{m}^{\partial \Omega_r}(\mathfrak{z}) = 
\int_{N(o)} y^2 \partial_x g(r+\mathfrak{i}\, y, w)\, \phi(y)\,dy.
$$
This notation is slightly inprecise; we keep in mind that in the integral 
over $N(o)$, one has to take the sum over such integrals, where each one
ranges over one of the edges of  $N(o)\,$. 

We have obtained that for any $x \in \mathbb{R} \setminus\{0\}$,
$$
\begin{aligned}
1 - \Pi(|x|) &= \int_{N(o)}\!\! y^2 \partial_x\Bigl(g(-|x|+\mathfrak{i}\, y, w) - g(|x|+\mathfrak{i}\, y, w)\Bigr)
\, \phi(y)\,dy\,,\quad\text{where}\\
\quad \Pi(|x|)  
&= \int_{\partial^{\text{\rm hor}} \Omega_{|x|}}\Pi^{\Omega}(\mathfrak{w},\mathfrak{z})\, d\mathbf{m}^{\partial \Omega_{|x|}}(\mathfrak{z})\,.
\end{aligned}
$$
Now we choose a fixed $\varepsilon > 0$ and let $r > \varepsilon$. We integrate both sides
over $x$ which varies in the interval 
$I = (r-\varepsilon\,,r+\varepsilon)$. Then we get, with an obvious exchange of the order of
integration
$$
\begin{aligned}
2\varepsilon &- \int_{I}\Pi(|x|)\,dx \\
&= \int_{N(o)} \int_{I} y^2 \partial_x\Bigl(g(-|x|+\mathfrak{i}\, y, w) 
- g(|x|+\mathfrak{i}\, y, w)\Bigr) \,dx\, \phi(y)\,dy \\
&= \int_{N(o)} \Bigl(g(-r - \varepsilon +\mathfrak{i}\, y, w) - g(r+\varepsilon+\mathfrak{i}\, y, w)\\
&\hspace{100pt} -g(-r+\varepsilon + \mathfrak{i}\, y,w)  + g(r-\varepsilon+\mathfrak{i}\, y, w)\Bigr)\, \phi(y)\,dy 
\end{aligned}
$$
When $r \to \infty$, the last integral 
tends to to $0$ by Lemma \ref{lem:zero}, while the one on the left hand side tends to 
$2\varepsilon \int_{\partial \Omega} \Pi^{\Omega}(\mathfrak{w},\cdot)\, d\mathbf{m}^{\partial \Omega}$.
Thus,
$$
2\varepsilon = 2\varepsilon \int_{\partial \Omega} \Pi^{\Omega}(\mathfrak{w},\mathfrak{z})\, d\mathbf{m}^{\Omega}(\mathfrak{z})\,,
$$
which yields the proposed statement.
\end{proof} 

We are finally in the position to extend Lemma \ref{lem:poiss} to
the unbounded set $\Omega = \Omega_o$ in ${\mathsf{HT}}$. This will also prove that
the exit measure $\mu$ from $\Omega$ has a continuous density with respect to
Lebesgue measure on $\partial \Omega$, or equivalently, with respect to 
Haar measure on $\mathcal{A}$.

\begin{thm}\label{thm:poiss}{\bf [Poisson representation]}
Let $h \in \mathcal D^2_{\alpha,\beta}(\overline\Omega)$
be harmonic, that is, $\Delta_{\alpha,\beta}h = 0$ in $\Omega$. 
Suppose that $h$ satisfies the growth condition
\begin{equation}\label{eq:expgro} 
|h(\mathfrak{z})| \le C_1 \,e^{C_2\, \mathsf{d}_{{\mathsf{HT}}}(\mathfrak{z},\mathfrak{o})} 
\end{equation}
on $\Omega$. ($C_1\,, C_2 > 0$.)

Then for every $\mathfrak{w} \in \Omega$,
$$
h(\mathfrak{w}) = \int_{\partial \Omega} \Pi^{\Omega}(\mathfrak{w},\mathfrak{z}) h(\mathfrak{z}) \, 
d\mathbf{m}^{\partial \Omega}(\mathfrak{z})\,.
$$
In particular, the Poisson kernel $\Pi^{\Omega}(\mathfrak{w},\cdot)$  
is the density of the exit distribution $\mu_{\mathfrak{w}}^{\Omega}$ 
with respect to the boundary measure $\mathbf{m}^{\partial \Omega}$, where $\mathfrak{w} \in \Omega$.
\end{thm}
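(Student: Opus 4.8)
The plan is to pass to the limit $r\to\infty$ in the Poisson representation on the bounded sets $\Omega_r$ furnished by Lemma~\ref{lem:poiss} and Corollary~\ref{cor:dens}, using Lemma~\ref{lem:measure1} together with the growth bound~\eqref{eq:expgro} to kill the contribution of the vertical boundary, and Proposition~\ref{pro:one} to identify the limiting density on $\partial\Omega=\partial^{\text{\rm hor}}\Omega$. First I would fix $\mathfrak{w}\in\Omega$ and $r>|\operatorname{\text{\sl \!Re}}\mathfrak{w}|$. By the very definition of the space $\mathcal D^2_{\alpha,\beta}(\overline\Omega)$ (Definition~\ref{def:D2}), the restriction of $h$ to $\overline\Omega_r$ lies in $\mathcal D^2_{\alpha,\beta}(\overline\Omega_r)$ and is harmonic there, so by Lemma~\ref{lem:poiss} and Corollary~\ref{cor:dens}
$$
h(\mathfrak{w}) = \int_{\partial\Omega_r}h\,d\mu_{\mathfrak{w}}^{\Omega_r}
= \mathsf{E}_{\mathfrak{w}}\bigl[h(X_{\tau^{\Omega_r}})\,;\,X_{\tau^{\Omega_r}}\in\partial^{\text{\rm hor}}\Omega_r\bigr]
 + \mathsf{E}_{\mathfrak{w}}\bigl[h(X_{\tau^{\Omega_r}})\,;\,X_{\tau^{\Omega_r}}\in\partial^{\text{\rm vert}}\Omega_r\bigr],
$$
and the whole proof consists in letting $r\to\infty$ in the two terms on the right (the corners of $\Omega_r$ carrying no mass).

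For the vertical term: on $\partial^{\text{\rm vert}}\Omega_r=N_r(o)\cup N_{-r}(o)$ the imaginary part of the $\mathbb H$-coordinate stays in the fixed compact interval $[\mathsf{q}^{-1},\mathsf{q}]$ while the $\mathbb T$-coordinate ranges in $N(o)$; since $o\curlywedge w\in\{o,w\}$ for $w\in N(o)$, the first case of~\eqref{eq:metric} expresses $\mathsf{d}_{{\mathsf{HT}}}(\mathfrak{z},\mathfrak{o})$ as a hyperbolic distance, and the elementary identity for $\cosh\mathsf{d}_{{\mathbb H}}$ gives $\mathsf{d}_{{\mathsf{HT}}}(\mathfrak{z},\mathfrak{o})\le 2\log r + c$ on $\partial^{\text{\rm vert}}\Omega_r$ for all large $r$, with $c$ independent of $r$. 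Hence~\eqref{eq:expgro} yields $|h|\le C\,r^{2C_2}$ there, and together with Lemma~\ref{lem:measure1},
$$
\Bigl|\mathsf{E}_{\mathfrak{w}}\bigl[h(X_{\tau^{\Omega_r}})\,;\,X_{\tau^{\Omega_r}}\in\partial^{\text{\rm vert}}\Omega_r\bigr]\Bigr|
\le C\,r^{2C_2}\,\mu_{\mathfrak{w}}^{\Omega_r}\bigl(\partial^{\text{\rm vert}}\Omega_r\bigr)
\le 2C\,r^{2C_2}\,\rho^{\,r-|\operatorname{\text{\sl \!Re}}\mathfrak{w}|-1}\to 0\quad(r\to\infty).
$$

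For the horizontal term: $\tau^{\Omega}$ is a.s. finite (cf.~\eqref{eq:sig}) and the paths of $(X_t)$ are continuous, so $M:=\sup_{t\le\tau^{\Omega}}|\operatorname{\text{\sl \!Re}} X_t|<\infty$ a.s.; for $r>M$ one has $\tau^{\Omega_r}=\tau^{\Omega}$ and $X_{\tau^{\Omega_r}}=X_{\tau^{\Omega}}\in\partial^{\text{\rm hor}}\Omega_r$, whence, by continuity of $h$ on $\overline\Omega$, $h(X_{\tau^{\Omega_r}})\mathbf{1}_{\{X_{\tau^{\Omega_r}}\in\partial^{\text{\rm hor}}\Omega_r\}}\to h(X_{\tau^{\Omega}})$ almost surely. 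To interchange limit and expectation I would use dominated convergence: on $\{X_{\tau^{\Omega_r}}\in\partial^{\text{\rm hor}}\Omega_r\}$ the distance estimate above gives $|h(X_{\tau^{\Omega_r}})|\le C(1+|\operatorname{\text{\sl \!Re}} X_{\tau^{\Omega_r}}|)^{2C_2}$, while $\{\,|\operatorname{\text{\sl \!Re}} X_{\tau^{\Omega_r}}|>k,\ X_{\tau^{\Omega_r}}\in\partial^{\text{\rm hor}}\Omega_r\,\}\subset\{\max_{t\le\tau^{\Omega}}|\operatorname{\text{\sl \!Re}} X_t|>k\}$, whose probability is at most $2\rho^{\,k-|\operatorname{\text{\sl \!Re}}\mathfrak{w}|-1}$ by Lemma~\ref{lem:measure1}; hence the random variables in question are dominated, uniformly in $r$, by the integrable $\sum_{k\ge 0}C(1+k)^{2C_2}\mathbf{1}_{\{\max_{t\le\tau^{\Omega}}|\operatorname{\text{\sl \!Re}} X_t|>k\}}$ (an exponential tail beats a polynomial). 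Dominated convergence then gives $\mathsf{E}_{\mathfrak{w}}[h(X_{\tau^{\Omega_r}})\,;\,\partial^{\text{\rm hor}}\Omega_r]\to\int_{\partial\Omega}h\,d\mu_{\mathfrak{w}}^{\Omega}$, and combining with the previous step, $h(\mathfrak{w})=\int_{\partial\Omega}h\,d\mu_{\mathfrak{w}}^{\Omega}$.

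It remains to identify $\mu_{\mathfrak{w}}^{\Omega}$ with $\Pi^{\Omega}(\mathfrak{w},\cdot)\,d\mathbf{m}^{\partial\Omega}$, which upgrades the last formula to the asserted one. For $r<r'$ the Dirichlet Green kernels satisfy $\mathbf{g}^{\Omega_r}\le\mathbf{g}^{\Omega_{r'}}\le\mathbf{g}^{\Omega}$, all three vanishing on $\partial^{\text{\rm hor}}\Omega_r$, so comparing the (one-sided) outward normal derivatives of the non-negative harmonic differences along $\partial^{\text{\rm hor}}\Omega_r$ shows $\Pi^{\Omega_r}(\mathfrak{w},\cdot)\le\Pi^{\Omega_{r'}}(\mathfrak{w},\cdot)\le\Pi^{\Omega}(\mathfrak{w},\cdot)$ there, while $\mathbf{m}^{\partial\Omega_r}$ and $\mathbf{m}^{\partial\Omega}$ coincide on $\partial^{\text{\rm hor}}\Omega_r$; thus $\Pi^{\Omega_r}(\mathfrak{w},\cdot)\mathbf{1}_{\partial^{\text{\rm hor}}\Omega_r}$ increases to some $\Pi^{*}(\mathfrak{w},\cdot)\le\Pi^{\Omega}(\mathfrak{w},\cdot)$ on $\partial\Omega$. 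Running the argument of the previous paragraph with a bounded continuous compactly supported $\psi$ in place of $h$ (bounded convergence now suffices), together with monotone convergence on the kernel side, yields $\int_{\partial\Omega}\Pi^{*}(\mathfrak{w},\cdot)\,\psi\,d\mathbf{m}^{\partial\Omega}=\int_{\partial\Omega}\psi\,d\mu_{\mathfrak{w}}^{\Omega}$ for all such $\psi$, i.e. $\mu_{\mathfrak{w}}^{\Omega}=\Pi^{*}(\mathfrak{w},\cdot)\,d\mathbf{m}^{\partial\Omega}$; letting $\psi\uparrow 1$ and invoking Proposition~\ref{pro:one} gives $\int_{\partial\Omega}\Pi^{*}(\mathfrak{w},\cdot)\,d\mathbf{m}^{\partial\Omega}=1=\int_{\partial\Omega}\Pi^{\Omega}(\mathfrak{w},\cdot)\,d\mathbf{m}^{\partial\Omega}$, and since $\Pi^{*}\le\Pi^{\Omega}$ this forces $\Pi^{*}=\Pi^{\Omega}$ $\mathbf{m}^{\partial\Omega}$-almost everywhere; hence $\mu_{\mathfrak{w}}^{\Omega}=\Pi^{\Omega}(\mathfrak{w},\cdot)\,d\mathbf{m}^{\partial\Omega}$, which combined with $h(\mathfrak{w})=\int_{\partial\Omega}h\,d\mu_{\mathfrak{w}}^{\Omega}$ is the assertion. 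The main obstacle is precisely this last identification: a priori the increasing limit $\Pi^{*}$ could be strictly smaller than $\Pi^{\Omega}$, with harmonic mass escaping to infinity along the non-compact boundary $\partial\Omega$, and it is exactly Proposition~\ref{pro:one} (obtained earlier through the balayage estimate of Lemma~\ref{lem:zero}) that rules this out. The other point of care is the uniform integrability in the horizontal estimate, where the exponential decay of the exit law (Lemma~\ref{lem:measure1}) must be pitted against the merely polynomial growth that~\eqref{eq:expgro} imposes on $h$ along $\partial\Omega$.
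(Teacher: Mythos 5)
Your argument is correct and rests on exactly the same three pillars as the paper's proof: Lemma~\ref{lem:poiss}/Corollary~\ref{cor:dens} on the truncations $\Omega_r$, the exponential exit estimate of Lemma~\ref{lem:measure1} combined with the logarithmic metric bound on $\overline\Omega$ to dominate the harmonic function, and Proposition~\ref{pro:one} to pin down the limiting Poisson kernel. The only difference is organisational: the paper first treats bounded $h$ (where the balayage identity $\Pi^{\Omega}-\Pi^{\Omega_r}=\int_{\partial^{\text{\rm vert}}\Omega_r}\Pi^{\Omega}(\cdot,\mathfrak{z})\,d\mu_{\mathfrak{w}}^{\Omega_r}$ plus $\|h\|_\infty$ closes the argument and yields $\mu_{\mathfrak{w}}^{\Omega}=\Pi^{\Omega}\,d\mathbf{m}^{\partial\Omega}$), and then runs a separate martingale-plus-dominated-convergence step for general $h$; you instead handle general $h$ in one pass via the identity $h(\mathfrak{w})=\mathsf{E}_{\mathfrak{w}}\bigl(h(X_{\tau^{\Omega_r}})\bigr)$ and DCT, and identify $\mu_{\mathfrak{w}}^{\Omega}$ with the Poisson kernel at the end by the monotone limit $\Pi^{\Omega_r}\uparrow\Pi^{*}\le\Pi^{\Omega}$ together with the normalisation $\int\Pi^{\Omega}\,d\mathbf{m}^{\partial\Omega}=1$. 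Both routes are equivalent; yours avoids singling out the bounded case, while the paper's bounded case serves as the natural place to record \eqref{eq:mu-ex} before the martingale step. One small point to tighten when writing this up: the distance bound $\mathsf{d}_{{\mathsf{HT}}}(\mathfrak{z},\mathfrak{o})\le 2\log(|\operatorname{\text{\sl \!Re}}\mathfrak{z}|+1)+c$ should be stated on all of $\overline\Omega$ (as in the paper's \eqref{eq:logbound}), not only on the vertical slices $N_{\pm r}(o)$, since you invoke it both for the vertical boundary term and for the uniform integrability of $h(X_{\tau^{\Omega}})$ on the horizontal boundary.
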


\begin{proof} 
We first assume that $h$ is bounded.
Let $r$ be large enough such that
$\mathfrak{w} \in \Omega_r\,$. Then by Lemma \ref{lem:poiss} and because 
$d\mu_{\mathfrak{w}}^{\Omega_r} 
= \Pi^{\Omega_r}(\mathfrak{w},\cdot)\,d\mathbf{m}^{\partial \Omega_r}\,$,
$$
\begin{aligned}
h(\mathfrak{w}) &= \int_{\partial \Omega_r} \Pi^{\Omega_r}(\mathfrak{w},\mathfrak{z}) h(\mathfrak{z}) \, 
d\mathbf{m}^{\partial \Omega_r}(\mathfrak{z})\\
&= \int_{\partial^{\text{\rm hor}} \Omega_r} \Pi^{\Omega_r}(\mathfrak{w},\mathfrak{z}) h(\mathfrak{z}) \, 
d\mathbf{m}^{\partial \Omega}(\mathfrak{z})
+ \int_{\partial^{\text{\rm vert}} \Omega_{r}} h(\mathfrak{z}) d\mu_{\mathfrak{w}}^{\Omega_r}(\mathfrak{z})
\,.
\end{aligned}
$$
When $r \to \infty$, the last integral along $\partial^{\text{\rm vert}} \Omega_{r}$ 
tends to $0$ by Lemma \ref{lem:measure1} and boundedness of $h$. 
$$
\int_{\partial^{\text{\rm hor}} \Omega_r} \Pi^{\Omega_r}(\mathfrak{w},\mathfrak{z}) h(\mathfrak{z}) \, 
d\mathbf{m}^{\partial \Omega}(\mathfrak{z})
\to 
\int_{\partial \Omega} \Pi^{\Omega}(\mathfrak{w},\mathfrak{z}) h(\mathfrak{z}) \, 
d\mathbf{m}^{\partial \Omega}(\mathfrak{z})\,.
$$
Taking normal derivatives in \eqref{eq:bala}, we have for all $\mathfrak{w} \in \Omega_r$
and $\mathfrak{z} \in \partial\Omega$
$$
\Pi^{\Omega}(\mathfrak{w}, \mathfrak{z}) - \Pi^{\Omega_r}(\mathfrak{w}, \mathfrak{z})
= \int_{\partial^{\text{\rm vert}} \Omega_r} 
\Pi^{\Omega}(\cdot, \mathfrak{z}) \, d\mu_{\mathfrak{w}}^{\Omega_r} \ge 0
$$
with $\Pi^{\Omega_r}(\mathfrak{w}, \mathfrak{z}) = 0$ when $\mathfrak{z} \in \partial \Omega \setminus \partial^{\text{\rm hor}} \Omega_r\,$.
Therefore, using Proposition \ref{pro:one}, 
$$
\begin{aligned}
\bigg| \int_{\partial \Omega} &\Pi^{\Omega}(\mathfrak{w},\mathfrak{z}) h(\mathfrak{z})
\, d\mathbf{m}^{\partial \Omega}(\mathfrak{z})
- \int_{\partial \Omega} \Pi^{\Omega_r}(\mathfrak{w}, \mathfrak{z}) h(\mathfrak{z})
\, d\mathbf{m}^{\partial \Omega}(\mathfrak{z}) \bigg|\\ 
&= \bigg| \int_{\partial^{\text{\rm vert}} \Omega} \int_{\partial \Omega} 
\Pi^{\Omega}(\cdot, \mathfrak{z}) h(\mathfrak{z}) \, d\mathbf{m}^{\partial \Omega}(\mathfrak{z}) 
\, d\mu_{\mathfrak{w}}^{\Omega_r} \bigg|
\le \| h\|_{\infty}\, \mu_{\mathfrak{w}}^{\Omega_r} 
\bigl( \partial^{\text{\rm vert}}\Omega_r \bigr)\,,
\end{aligned}
$$
which tends to $0$ as $r \to \infty$ by Lemma \ref{lem:measure1}.

At this  point, we have obtained the Poisson representation for all
bounded harmonic functions. In the same way as we got Corollary \ref{cor:dens}, 
we get that
\begin{equation}\label{eq:mu-ex}
\Pi^{\Omega}(\mathfrak{w},\mathfrak{z})\,  d\mathbf{m}^{\partial \Omega}(\mathfrak{z}) = d\mu_{\mathfrak{w}}^{\Omega}(\mathfrak{z})\,,
\end{equation}
as proposed. 

\smallskip

Before we proceed, we make a simple observation on the metric. For any \hbox{$\mathfrak{z} = (z,w) \in {\mathsf{HT}}$,}
$$
\mathsf{d}_{{\mathsf{HT}}}(\mathfrak{z},\mathfrak{o}) \le 2 \log(|\operatorname{\text{\sl \!Re}} z| + 1) + \mathsf{d}_{{\mathbb T}}(w,o)\, \log \mathsf{q} .
$$
Indeed, this is the triangle inequality: let $\mathfrak{z}' = (\operatorname{\text{\sl \!Re}} z + \mathfrak{i}\,,o)$ be the point
on $\mathsf{L}_o$ with $\operatorname{\text{\sl \!Re}} \mathfrak{z}' = \operatorname{\text{\sl \!Re}} \mathfrak{z}$. Then $\mathsf{d}_{{\mathsf{HT}}}(\mathfrak{z},\mathfrak{z}') = \mathsf{d}_{{\mathbb T}}(w,o)\, \log \mathsf{q}$,
and $\mathsf{d}_{{\mathsf{HT}}}(\mathfrak{z}',\mathfrak{o}) = \mathsf{d}_{{\mathbb H}}(\operatorname{\text{\sl \!Re}} z + \mathfrak{i}\,, \mathfrak{i}\,) \le 2\log(|\operatorname{\text{\sl \!Re}} z| + 1)$,
a simple computation with the hyperbolic metric.
Thus,
\begin{equation}\label{eq:logbound}
\mathsf{d}_{{\mathsf{HT}}}(\mathfrak{z},\mathfrak{o}) \le 2 \log(|\operatorname{\text{\sl \!Re}} \mathfrak{z}| + 1) + D\,,
\quad \text{for all }\; \mathfrak{z} \in \overline\Omega\,,
\end{equation}
where $D =  \log \mathsf{q}\,$.

\smallskip

Now let $h$ be an arbitrary harmonic function on $\Omega$ that satisfies
\eqref{eq:expgro}. Suppose first that $\operatorname{\text{\sl \!Re}} \mathfrak{w} = 0$. Let $\sigma(0)=0$, and
as in the proof of Lemma \ref{lem:measure1}, let $\sigma(k)$ be the exit time
of Brownian motion from $\Omega_k$, where $k \in {\mathbb N}$. Then harmonicity of $h$ on $\Omega$
yields that $\bigl( h(X_{\sigma(k)}) \bigr)_{k \ge 0}$ is a discrete-time martingale.
Consider the random variable 
\begin{equation}\label{eq:Rmax}
R_{\max} = \max \{ |\operatorname{\text{\sl \!Re}} X_t| : t \le \tau\}.
\end{equation}
By Lemma \ref{lem:measure1},  we have
that 
$$
\mathsf{E}_{\mathfrak{z}}(R_{\max}^{\,s}) < \infty \quad \text{for every}\;  s> 0.
$$
Combining the growth condition \eqref{eq:expgro} with \eqref{eq:logbound},
we get 
$$
|h(X_t)| \le C_1\, e^{C_2 D}\, \bigl( R_{\max} + 1 \bigr)^{2 C_2} \quad \text{for } 0 \le t \le \tau\,,  
$$
an integrable upper bound. In particular, our martingale converges almost surely.

We now note that $\sigma(k)=\tau$ if $(X_t)$ exits $\Omega_k$ at $\partial^{\text{\rm hor}}\Omega_k$
and that $\sigma(k) < \tau$ if $(X_t)$ exits $\Omega_k$ at $\partial^{\text{\rm vert}}\Omega_k\,$.
In particular, $\sigma(k) = \tau$ when $k \ge R_{\max}\,$. 
We can decompose
$$
h(X_{\sigma(k)}) = h(X_{\tau})\,\mathbf{1}_{[\sigma(k)=\tau]} + 
h(X_{\sigma(k)})\,\mathbf{1}_{[\sigma(k)<\tau]}\,,
$$
and see that  
$h(X_{\sigma(k)}) \to h(X_\tau)$ almost surely. Using dominated convergence and
the martingale property, we conclude that
$$
f(\mathfrak{z}) = \mathsf{E}_{\mathfrak{z}}\bigl( h(X_{\tau}) \bigr)\,, 
$$
which proves the required statement for the case when $\operatorname{\text{\sl \!Re}} \mathfrak{z} =0$. For
arbirtary $\mathfrak{z} \in \Omega$, it follows immediately from the horizontal translation 
invariance of our Laplacian.
\end{proof}

\begin{cor}\label{cor:stren}
For any $\mathfrak{w} \in \Omega$, the exit measure $\mu_{\mathfrak{w}}^{\Omega}$ has continuous,
strictly positive density with respect to Lebesgue measure on the lines that
make up  $\partial\Omega$. It satisfies the moment condition
$$
\int_{\partial \Omega} \exp\bigl( \lambda \, |\operatorname{\text{\sl \!Re}} \mathfrak{z}|\bigr)\, d\mu_{\mathfrak{w}}^{\Omega}(\mathfrak{z}) < \infty
$$
for any $\lambda < \log(1/\rho)$ with $\rho$ as in Lemma \ref{lem:measure1}. Thus, we have the 
double exponential moment condition,
$$
\int_{\partial \Omega} \exp \Bigl(\exp\bigl(\mathsf{d}_{{\mathsf{HT}}}(\mathfrak{o},\mathfrak{z})/3\bigr)\Bigr)\, 
d\mu_{\mathfrak{w}}^{\Omega}(\mathfrak{z}) < \infty\,.
$$
\end{cor}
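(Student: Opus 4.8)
The plan is to read off all three assertions from the Poisson representation of Theorem~\ref{thm:poiss} together with the exponential escape bound of Lemma~\ref{lem:measure1}. By Theorem~\ref{thm:poiss} we have $d\mu_{\mathfrak{w}}^{\Omega}=\Pi^{\Omega}(\mathfrak{w},\cdot)\,d\mathbf{m}^{\partial\Omega}$, and $\partial\Omega$ is the disjoint union of the bifurcation lines $\mathsf{L}_v$, $v\in\partial N(o)$. On each such line the imaginary part $y=\mathsf{q}^{\mathfrak{h}(v)}$ and the weight $\phi_{\alpha,\beta}$ are constant, so by \eqref{eq:mab'} the restriction of $\mathbf{m}^{\partial\Omega}$ to $\mathsf{L}_v$ is a fixed positive multiple of Lebesgue measure $dx$; hence $\mu_{\mathfrak{w}}^{\Omega}$ has a density on $\partial\Omega$ equal, line by line, to a positive constant times $\Pi^{\Omega}(\mathfrak{w},\cdot)=\pm\partial_y\mathbf{g}^{\Omega}(\cdot,\mathfrak{w})$. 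Its continuity is the continuity of $\partial_y\mathbf{g}^{\Omega}(\cdot,\mathfrak{w})$ up to $\partial\Omega$, which holds on every compact piece of $\overline\Omega$ by the same interior and boundary regularity for the elliptic operator $\Delta_{\alpha,\beta}$ already invoked for $\Omega_r$ (cf.\ \cite{Eva} and \cite{BSSW1}); note that near $\partial\Omega$ the operator is just the plain uniformly elliptic operator $y^{2}(\partial_x^2+\partial_y^2)+\alpha\,y\,\partial_y$ on a one-sided neighbourhood in a single strip, since the only bifurcation line meeting $\overline\Omega$ other than $\partial\Omega$ is $\mathsf{L}_o$, which lies in the interior $\Omega$.

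For the strict positivity of the density I would apply the Hopf boundary point lemma to $\mathbf{g}^{\Omega}(\cdot,\mathfrak{w})$: this function is positive and annihilated by $\Delta_{\alpha,\beta}$ in a one-sided neighbourhood of any point $\mathfrak{z}_0\in\partial\Omega$, vanishes at $\mathfrak{z}_0$, and $\partial\Omega$ consists of straight lines in the strip coordinates, so the interior ball condition is satisfied at $\mathfrak{z}_0$; hence the inward normal derivative of $\mathbf{g}^{\Omega}(\cdot,\mathfrak{w})$ at $\mathfrak{z}_0$ is strictly positive, i.e.\ $\Pi^{\Omega}(\mathfrak{w},\mathfrak{z}_0)>0$. (Alternatively, one can observe that Brownian motion from $\mathfrak{w}$ assigns positive exit probability to every nonempty relatively open subset of $\partial\Omega$, and combine this with the continuity just established.)

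It remains to prove the two moment bounds. If $(X_t)$ exits $\Omega$ at a point $\mathfrak{z}$ with $|\operatorname{\text{\sl \!Re}}\mathfrak{z}|>r$, then necessarily $\max\{|\operatorname{\text{\sl \!Re}} X_t|:t\le\tau^{\Omega}\}>r$, so Lemma~\ref{lem:measure1} gives, for every $r>|\operatorname{\text{\sl \!Re}}\mathfrak{w}|$,
$$
\mu_{\mathfrak{w}}^{\Omega}\bigl(\{\mathfrak{z}\in\partial\Omega:\;|\operatorname{\text{\sl \!Re}}\mathfrak{z}|>r\}\bigr)\le 2\,\rho^{\,r-|\operatorname{\text{\sl \!Re}}\mathfrak{w}|-1}\,.
$$
Feeding this into the layer-cake identity $\int_{\partial\Omega}e^{\lambda|\operatorname{\text{\sl \!Re}}\mathfrak{z}|}\,d\mu_{\mathfrak{w}}^{\Omega}=1+\lambda\int_0^{\infty}e^{\lambda r}\,\mu_{\mathfrak{w}}^{\Omega}(\{|\operatorname{\text{\sl \!Re}}\mathfrak{z}|>r\})\,dr$, and bounding the tail by $1$ for small $r$, gives finiteness exactly when $\lambda<\log(1/\rho)$, since then $e^{\lambda r}\rho^{\,r}=e^{-(\log(1/\rho)-\lambda)\,r}$ is integrable on $[0,\infty)$. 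For the double exponential moment, the metric estimate \eqref{eq:logbound} gives $\mathsf{d}_{{\mathsf{HT}}}(\mathfrak{o},\mathfrak{z})\le 2\log(|\operatorname{\text{\sl \!Re}}\mathfrak{z}|+1)+D$ on $\overline\Omega$, hence $\exp\bigl(\mathsf{d}_{{\mathsf{HT}}}(\mathfrak{o},\mathfrak{z})/3\bigr)\le e^{D/3}(|\operatorname{\text{\sl \!Re}}\mathfrak{z}|+1)^{2/3}$; since the exponent $2/3$ is strictly less than $1$, the left-hand side is at most $\lambda\,|\operatorname{\text{\sl \!Re}}\mathfrak{z}|$ once $|\operatorname{\text{\sl \!Re}}\mathfrak{z}|$ is large, for any fixed $\lambda\in(0,\log(1/\rho))$, and therefore $\int_{\partial\Omega}\exp\bigl(\exp(\mathsf{d}_{{\mathsf{HT}}}(\mathfrak{o},\mathfrak{z})/3)\bigr)\,d\mu_{\mathfrak{w}}^{\Omega}<\infty$ follows from the first moment bound.

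The only step that is not pure bookkeeping is the strict positivity of the density: one must check that the Hopf lemma (or the support-theorem argument) genuinely applies, i.e.\ that $\partial\Omega$ avoids the singular line $\mathsf{L}_o$ of $\Delta_{\alpha,\beta}$ -- which it does, since $\partial\Omega\subset\bigcup_{v\in\partial N(o)}\mathsf{L}_v$ while $\mathsf{L}_o\subset\Omega$. All remaining steps are immediate consequences of Theorem~\ref{thm:poiss}, Lemma~\ref{lem:measure1}, and the metric bound \eqref{eq:logbound}.
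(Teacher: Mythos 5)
Your proof is correct and follows the same basic strategy as the paper (Poisson representation from Theorem~\ref{thm:poiss} plus the exponential escape bound of Lemma~\ref{lem:measure1}), but it is substantially more detailed; the paper's own argument is terse and you fill in genuine content. Specifically: the paper infers ``continuous density'' directly from the last assertion of Theorem~\ref{thm:poiss}, establishes only that $\operatorname{\sf supp}\mu_{\mathfrak{w}}^{\Omega}=\partial\Omega$ (quoting regularity of boundary points, resp.\ \eqref{eq:mu-ex}), and then proves the tail bound
$\mu_{\mathfrak{w}}^{\Omega}(\partial\Omega\setminus\partial\Omega_r)\le\mu_{\mathfrak{w}}^{\Omega_r}(\partial^{\text{\rm vert}}\Omega_r)$
and stops; the double exponential moment is left implicit. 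You instead make the $\mathbf{m}^{\partial\Omega}$-to-Lebesgue comparison explicit, invoke elliptic boundary regularity for continuity of $\Pi^{\Omega}(\mathfrak{w},\cdot)$ up to $\partial\Omega$, and upgrade ``full support'' to genuine pointwise strict positivity of the density via the Hopf boundary point lemma applied to $\mathbf{g}^{\Omega}(\cdot,\mathfrak{w})$ in a one-sided neighbourhood --- this is the right tool, and your observation that $\partial\Omega$ avoids the singular line $\mathsf{L}_o$ so the operator there is a plain uniformly elliptic one is exactly what makes the lemma applicable. (Note that full support plus continuity of the density does not by itself give strict positivity at every point, so your Hopf argument is a real strengthening of what the paper's proof literally says, whereas the parenthetical ``support theorem'' alternative you offer would only recover full support.) The layer-cake computation for the exponential moment and the reduction of the double exponential moment to it via \eqref{eq:logbound} (using $2/3<1$) are both correct and match what the paper tacitly relies on.
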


\begin{proof}
The fact that $\operatorname{\sf supp} \mu_{\mathfrak{w}}^{\Omega} = \partial \Omega$  follows from \eqref{eq:mu-ex},
or equivalently from the observation made in the proof of the next corollary that
all boundary points are regular: indeed, every regular point must belong to the support
of the exit distribution. 

The moment condition follows from Lemma \ref{lem:measure1}: if $(X_t)$ starts at $\mathfrak{w} \in \Omega_r$
and exits $\Omega$ at some $\mathfrak{z} \in \partial \Omega \setminus \partial \Omega_r$ then it must exit
$\Omega_r$ through its vertical boundary, that is
$$
\mu_{\mathfrak{w}}^{\Omega}(\partial \Omega \setminus \partial \Omega_r) 
\le \mu_{\mathfrak{w}}^{\Omega_r}\bigl( \partial^{\text{\rm vert}}\Omega_r \bigr)\,. 
\eqno\qedhere
$$
\end{proof}

\begin{thm}\label{thm:dir}
{\bf [Solution of the Dirichlet problem]}
Let $f \in \mathcal C(\partial\Omega)$ be 
such that the growth condition 
\eqref{eq:expgro} holds for $f$ on $\partial \Omega$. 
Then 
$$   
h(\mathfrak{w}) = \begin{cases}\displaystyle \int_{\partial \Omega} 
                    \Pi^{\Omega}(\mathfrak{w},\mathfrak{z}) f(\mathfrak{z}) \,
d\mathbf{m}^{\partial \Omega}(\mathfrak{z})\,, &\quad \mathfrak{w} \in \Omega\,,\\[10pt]
f(\mathfrak{w})\,,&\quad \mathfrak{w} \in \partial\Omega
\end{cases}
$$   
defines an extension of $f$ that is harmonic on $\Omega$ and continuous 
on $\overline\Omega$. 
Furthermore, there is at most one such extension which 
satisfies \eqref{eq:expgro} on the whole of $\Omega$.
\end{thm}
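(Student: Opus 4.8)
The plan is to recognize the asserted $h$ as the probabilistic solution $\mathfrak w\mapsto\mathsf E_\mathfrak w\bigl(f(X_{\tau^\Omega})\bigr)$ and then to verify, in turn: well-definedness of the integral, harmonicity on $\Omega$, continuity up to $\partial\Omega$ with the correct boundary values, and uniqueness within the growth class. For the first point, \eqref{eq:mu-ex} from the proof of Theorem~\ref{thm:poiss} gives $\Pi^\Omega(\mathfrak w,\cdot)\,d\mathbf m^{\partial\Omega}=d\mu_\mathfrak w^\Omega$, so the displayed integral equals $\mathsf E_\mathfrak w\bigl(f(X_{\tau^\Omega})\bigr)$; it is absolutely convergent because, by \eqref{eq:logbound}, the growth hypothesis \eqref{eq:expgro} on $\partial\Omega$ forces $|f(\mathfrak z)|\le C\,(|\operatorname{\text{\sl \!Re}}\mathfrak z|+1)^{2C_2}$ there, while by Corollary~\ref{cor:stren} the measure $\mu_\mathfrak w^\Omega$ has an exponential moment in $|\operatorname{\text{\sl \!Re}}\mathfrak z|$. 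Moreover Lemma~\ref{lem:measure1} gives $\mu_\mathfrak w^\Omega(\partial\Omega\setminus\partial\Omega_r)\le 2\rho^{\,r-|\operatorname{\text{\sl \!Re}}\mathfrak w|-1}$, so the tails $\int_{\partial\Omega\setminus\partial\Omega_r}|f|\,d\mu_\mathfrak w^\Omega$ are uniformly small for $\mathfrak w$ ranging in any set on which $|\operatorname{\text{\sl \!Re}}\mathfrak w|$ is bounded; this uniformity is used below.

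For harmonicity on $\Omega$ I would work from $h(\mathfrak w)=\mathsf E_\mathfrak w\bigl(f(X_{\tau^\Omega})\bigr)$. If $U$ is relatively compact and ``nicely shaped'' with $\overline U\subset\Omega$ and $\mathfrak w\in U$, then $\tau^U\le\tau^\Omega$ and $X_{\tau^U}\in\overline U\subset\Omega$, so the strong Markov property at $\tau^U$ yields $h(\mathfrak w)=\mathsf E_\mathfrak w\bigl(\mathsf E_{X_{\tau^U}}(f(X_{\tau^\Omega}))\bigr)=\mathsf E_\mathfrak w\bigl(h(X_{\tau^U})\bigr)=\int h\,d\mu_\mathfrak w^U$, the mean value identity of Definition~\ref{def:harmonic} (the integral being finite since $|h(\mathfrak w)|\le\int_{\partial\Omega}|f|\,d\mu_\mathfrak w^\Omega$ is locally bounded). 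Continuity of $h$ on $\Omega$ I would obtain by dominated convergence from the Poisson representation: $\Pi^\Omega(\cdot,\mathfrak z)$ is continuous, and for $\mathfrak w$ in a fixed compact $K\subset\Omega$ one splits $\int_{\partial\Omega}$ into the piece over the compact set $\partial\Omega_r$ — on which $\Pi^\Omega(\mathfrak w_n,\mathfrak z)\to\Pi^\Omega(\mathfrak w_0,\mathfrak z)$ uniformly — and the complementary tail, which is uniformly small by the first paragraph. Once continuity and the mean value property are in place, Proposition~\ref{pro:harmonic} moreover shows that $h\in\mathcal C^\infty(\Omega)$, satisfies the bifurcation conditions \eqref{eq:bif} along $\mathsf L_o$, and is annihilated by $\Delta_{\alpha,\beta}$ in each strip.

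The main step is continuity up to $\partial\Omega$, and the crux is that every boundary point is regular. Let $\mathfrak z_0=(z_0,v)\in\mathsf L_v$ with $v\in\partial N(o)$, so $\mathfrak h(v)\in\{-1,1\}$. Since $W_t=\pi^{\mathbb T}(X_t)$ leaves $N(o)^o$ exactly when $Y_t=\pi^{\mathbb R}(X_t)$ leaves $(-1,1)$, the exit time $\tau^\Omega$ is bounded above by the first positive time at which the one-dimensional diffusion $Y_t$ (generator $\Delta^{\mathbb R}_{\alpha,\beta\mathsf p}$) revisits $\{-1,1\}$; as $Y_0=\mathfrak h(v)\in\{-1,1\}$ and $Y_t$ is regular at these points, this is $0$ almost surely, so $\tau^\Omega=0$ $\mathsf{Pr}_{\mathfrak z_0}$-a.s. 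Granted regularity, $h(\mathfrak w)=\mathsf E_\mathfrak w\bigl(f(X_{\tau^\Omega})\bigr)\to f(\mathfrak z_0)$ as $\mathfrak w\to\mathfrak z_0$ is the classical statement for bounded continuous $f$; for $f$ satisfying only \eqref{eq:expgro} I would truncate, writing $f=f_M+(f-f_M)$ with $f_M=(-M)\vee(f\wedge M)$, apply the bounded case to $f_M$, and bound $\sup_\mathfrak w\int|f-f_M|\,d\mu_\mathfrak w^\Omega$ over $\mathfrak w$ in a fixed small neighbourhood of $\mathfrak z_0$ (hence with $|\operatorname{\text{\sl \!Re}}\mathfrak w|$ bounded) using Corollary~\ref{cor:stren}, then let $M\to\infty$. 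This gives the continuous extension. I expect this coupling of boundary regularity with a starting-point-uniform moment bound — precisely what Lemma~\ref{lem:measure1} and Corollary~\ref{cor:stren} furnish — to be the main obstacle.

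Finally, uniqueness. If $h_1,h_2$ are two such extensions both satisfying \eqref{eq:expgro} on all of $\Omega$, put $h=h_1-h_2$: harmonic on $\Omega$, continuous on $\overline\Omega$, $h\equiv0$ on $\partial\Omega$, and satisfying \eqref{eq:expgro}. I would reuse the martingale argument from the last part of the proof of Theorem~\ref{thm:poiss}: with $\sigma(k)$ the exit time from $\Omega_k$, $\bigl(h(X_{\sigma(k)})\bigr)_k$ is a martingale dominated, via \eqref{eq:Rmax} and Lemma~\ref{lem:measure1}, by the integrable variable $C\,(R_{\max}+1)^{2C_2}$; it converges almost surely, $\sigma(k)=\tau^\Omega$ once $k\ge R_{\max}$, and $h(X_{\tau^\Omega})=0$, so dominated convergence and the martingale property give $h(\mathfrak w)=\mathsf E_\mathfrak w\bigl(h(X_{\tau^\Omega})\bigr)=0$, i.e.\ $h_1=h_2$.
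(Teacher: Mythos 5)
Your proof is correct and follows essentially the same route as the paper's: regularity of every boundary point is reduced via \eqref{eq:sig} to the regularity of $\pm 1$ for the one-dimensional diffusion $Y_t$; finiteness of the Poisson integral comes from \eqref{eq:logbound} (which turns \eqref{eq:expgro} into polynomial growth in $|\operatorname{\text{\sl \!Re}}\mathfrak z|$) combined with the exponential moment in Corollary~\ref{cor:stren}; and uniqueness is the martingale argument already embedded in the proof of Theorem~\ref{thm:poiss}, applied here to the difference $h_1-h_2$. You supply more explicit detail than the paper (strong Markov for harmonicity, truncation to handle unbounded boundary data, the uniform-in-$\mathfrak w$ tail bound for the dominated-convergence steps), but these are elaborations of the same argument rather than a different strategy.
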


\begin{proof} 
A boundary point
$\mathfrak{z}$ of any open domain $\Omega \subset {\mathsf{HT}}$ is regular for the Dirichlet problem 
for continuous functions on $\partial \Omega$ if and only if 
the exit distribution
satisfies $\mathsf{Pr}_{\mathfrak{z}}[\tau^{\Omega} = 0] =1$ (a general fact from Potential
Theory). 

Every boundary point of $\Omega$ \emph{is} regular.
This follows from the fact that $\tau^{\Omega}$ (which was denoted $\sigma$ above)
is the same as the exit time
of the projected process $(Y_t)$ on $\mathbb{R}$ from the interval $[-1\,,\,1]$, see \eqref{eq:sig}.
But the Dirichlet problem for the latter interval (with boundary values at $\pm 1$)
is obviously solvable, as verified by
direct, elementary computations; see \cite[Lemma 4.4 \& proof]{BSSW2}.

Now, the function $h(\mathfrak{w})$ defined in the corollary is harmonic; its finiteness
is guaranteed by \eqref{eq:expgro} in combination with Corollary \ref{cor:stren}.  
By the same reason, it is continuous up to $\partial \Omega$, since every boundary point 
is regular, as we have just observed. Thus, we have a solution of the
Dirichlet problem.

Since $\Omega$ is not bounded, uniqueness is not immediate. 
(In the compact case, uniqueness would follow from the maximum principle: a 
harmonic function assumes its maximum on the boundary.) However, if some extension satisfies 
\eqref{eq:expgro} then we can apply Theorem \ref{thm:poiss} to see that it must coincide 
with the function defined in 
the present theorem.
\end{proof}

\begin{rmk}\label{rmk:density} 
What we have stated and proved for $\Omega$
also works in precisely the same way, with some more notational efforts, 
for more general sets which are unions of finitely many strips. 
We state several facts that will be used in the sequel.
\\[6pt]
(a) As already mentioned, group invariance implies that Theorem \ref{thm:poiss}, 
Theorem \ref{thm:dir} and Corollary \ref{cor:stren} are valid for any
set $\Omega_v$ ($v \in V({\mathbb T})$) in the place of $\Omega = \Omega_o\,$.
\\[6pt]
(b) We can take a compact metric subtree $T$ of ${\mathbb T}_{\mathsf{p}}$ which is \emph{full}
in the sense that for each of its vertices, either all, or else just one of its neigbours 
is in $T$. Then we can take the set 
$$
\overline \Omega_T = \{ \mathfrak{z} = (z,w) \in {\mathsf{HT}} : w \in T \},  
$$
and $\Omega_T$ is its interior.  By precisely the same methods as those
used above, with only minor and straightforward notational adaptations, one shows that 
also in this case, the exit distribution from $\Omega_T$ (with respect to any
starting point in that set) has a continuous
density with respect to the Lebesgue measure on the union of
the bifurcation lines that make up $\partial \Omega_T\,$. We also have the analogous 
Poisson representation and solution of the Dirichlet problem on $\Omega_T\,$, as well
as the exponential moment for the exit distribution,  which is supported by the
entire boundary of $\Omega_T\,$.
\\[6pt]
(c) For any set $\Theta \subset {\mathsf{HT}}$ and $\varepsilon > 0$, we let
\begin{equation}\label{eq:Lep}
\Theta^{\varepsilon} = \{ \mathfrak{z} \in \Theta : \mathsf{d}(\mathfrak{z}, \partial\Theta) > \varepsilon\}. 
\end{equation}
For all results stated in (b), we can also replace $\Omega_T$ by $\Omega_T^{\varepsilon}\,$,
where $0 < \varepsilon < \log \mathsf{q}\,$:
thus, $\partial \Omega_T^{\varepsilon}$ consists of horizontal lines in the interior of the boundary
strips of $\Omega_T\,$  at (hyperbolic) distance $\varepsilon$ from the boundary lines of $\Omega_T\,$.
\\[6pt]
(d) The same holds for ``sliced'' hyperbolic plane, interpreted as ${\mathsf{HT}}(\mathsf{q},1)$.
In this case, $\Omega_v$ should be replaced by the double strip 
$\widetilde \Omega_k = \pi^{{\mathbb H}}(\Omega_v)$ with $k = \mathfrak{h}(v)$,
and more generally this holds for any union of finitely many 
strips which is connected in ${\mathbb H}$, and we may extend those strips (or truncate) above and below 
up to some intermediate boundary lines. 
\\[6pt]
(e) In particular, all those results also hold for a single strip.
Note that this is in reality a classical situation because before reaching
the boundary of a strip, we are just observing ordinary hyperbolic Brownian motion with 
drift.  
\end{rmk}
The reason why we did not state any more general results, e.g. for relatively compact
domains that do not have rectangular shape, lies in the complication of how those sets
may cross (several times from different sides) the bifurcation lines. For sets with
sufficiently regular shape and piecewise smooth boundaries, such results can be stated and
elaborated with considerable further notational efforts, while the techniques remain
basically the same.

\section{Positive harmonic functions and the induced random walk}\label{sec:randomwalk}

We now finally turn our attention to positive harmonic functions on the whole of ${\mathsf{HT}}$.   
Recall the stopping times $\tau(n)$ from \eqref{eq:stop}. We assume that 
Brownian motion starts in some
point of $\operatorname{\sf LT}$, so that all increments $\tau(n) -\tau(n-1)$, $n \ge 1$, are i.i.d. and have
the same distribution as $\tau(1)$. Before coming to the core of this section,
we start with a simple result concerning the induced random walk $(W_{\tau(n)})_{n \ge 0}$ on 
${\mathbb T}$. An easy calculation \cite[Cor. 4.9]{BSSW2} shows that when it starts at a vertex of ${\mathbb T}$, then 
this is a nearest neighbour random walk on the vertex set of ${\mathbb T}$ with transition probabilities
\begin{equation}\label{eq:RWtree}
p_{{\mathbb T}}(v,v^-) = \frac{1}{1+\mathsf{a}} \quad\text{and}\quad p_{{\mathbb T}}(v^-,v) = \frac{\mathsf{a}}{(1+\mathsf{a})\mathsf{p}}\,,\quad v \in V({\mathbb T})
\end{equation}
with $\mathsf{a}$ as in \eqref{eq:rha}.
The one-step transition probabilities beween all other pairs of vertices are $0$.

\begin{lem}\label{lem:restrict-T}
\emph{(i)} If a non-negative function $f$ on ${\mathbb T}$
is $\Delta^{{\mathbb T}}_{\alpha,\beta}$-harmonic then its restriction to
$V({\mathbb T})$ is harmonic for the transition probabilites  of 
$(W_{\tau(n)})$. That is, for any 
$v \in V({\mathbb T})$
\begin{equation}\label{eq:restrict-T}
f(v) = \sum_{u \in V({\mathbb T})\,:\, u \sim v} p_{{\mathbb T}}(v,u)\,f(u)\,.
\end{equation}
\emph{(ii)}
Conversely, for any non-negative function $f$ on $V({\mathbb T})$ which satisfies 
\eqref{eq:restrict-T}, its unique extension to a  
$\Delta^{{\mathbb T}}_{\alpha,\beta}$-harmonic function on ${\mathbb T}$ is given on each edge 
$[v^-,v]$ of ${\mathbb T}$ and $w \in[v^-,v]$  by
\begin{equation}\label{eq:extend-T}
\begin{aligned}
f(w) &= \lambda(w)\cdot f(v^-) + \bigl(1 - \lambda(w)\bigr)\cdot f(v)\quad \text{with}\\[5pt]
\lambda(w) &=
\begin{cases} 
\dfrac{\mathsf{q}^{(\alpha-1)(\mathfrak{h}(v)-\mathfrak{h}(w))}-1}{\mathsf{q}^{(\alpha-1)}-1}\,,&\text{if}\;\alpha \ne 1\,,\\[5pt]
\mathfrak{h}(v)-\mathfrak{h}(w)\,,&\text{if}\;\alpha = 1\,,
\end{cases}
\end{aligned}
\end{equation}
where (recall)  $[v^-\,,\,v]$
corresponds to the real interval $[\mathfrak{h}(v)-1\,,\mathfrak{h}(v)]$.
\end{lem}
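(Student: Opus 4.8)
The plan is to reduce both parts to two local facts. First, on each closed edge $[v^-,v]$ — identified with the real interval $[\mathfrak{h}(v)-1,\mathfrak{h}(v)]$ via the coordinate $t=\mathfrak{h}(\cdot)$ — a function annihilated by $\Delta^{{\mathbb T}}_{\alpha,\beta}$ in the open edge solves the ODE $g''+(\alpha-1)(\log\mathsf{q})\,g'=0$, hence lies in the two-dimensional space spanned by $1$ and $\mathsf{q}^{-(\alpha-1)t}$ (by $1$ and $t$ when $\alpha=1$); since these two functions are linearly independent, the two-point boundary value problem with data prescribed at $v^-$ and $v$ has a unique solution, and a direct check identifies it with the affine combination \eqref{eq:extend-T}, normalised so that $\lambda(v^-)=1$, $\lambda(v)=0$ (and with $0\le\lambda(w)\le1$, an elementary computation). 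Second, when $f$ is built in this way on the edges incident to a vertex $v$, the Kirchhoff condition \eqref{eq:bif-LapT} at $v$ is \emph{equivalent} to the mean value identity \eqref{eq:restrict-T} at $v$; this equivalence is the one computation that really has to be carried out, and it serves both directions.

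For part (i) I would argue probabilistically. The description of $\tau(1)$ recalled in Section~\ref{sec:Laplacians} shows that, for the process $(W_t)=\pi^{{\mathbb T}}(X_t)$ started at a vertex $v$, $\tau(1)$ is precisely the exit time from the open neighbourhood star $N(v)^o$, and by \cite[Cor.~4.9]{BSSW2} the corresponding exit distribution $\mu_v^{N(v)^o}$ assigns mass $p_{{\mathbb T}}(v,u)$ to each neighbour $u$ of $v$. Since $N(v)^o$ is open and relatively compact with $\overline{N(v)^o}\subset{\mathbb T}$, applying Definition~\ref{def:harmonic} (in its tree version) with $U=N(v)^o$ gives $f(v)=\int f\,d\mu_v^{N(v)^o}=\sum_{u\sim v}p_{{\mathbb T}}(v,u)\,f(u)$, which is \eqref{eq:restrict-T}. (One could equally well run the analytic argument: a harmonic $f$, restricted to $[v^-,v]$, must be the solution \eqref{eq:extend-T} with endpoint values $f(v^-),f(v)$ by the uniqueness above, and then the equivalence of the Kirchhoff condition with \eqref{eq:restrict-T} yields the claim.)

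For part (ii) I would define $f$ on each edge $[v^-,v]$ by \eqref{eq:extend-T}. By the first local fact this produces a continuous function on ${\mathbb T}$ that agrees with the prescribed values at the vertices and is annihilated by $\Delta^{{\mathbb T}}_{\alpha,\beta}$ in every open edge; it is again non-negative because $0\le\lambda(w)\le1$. It then remains to verify the Kirchhoff condition at each vertex $v$, after which the tree analogue of Proposition~\ref{pro:harmonic} gives harmonicity on ${\mathbb T}$. For that I would differentiate \eqref{eq:extend-T} in $t=\mathfrak{h}(\cdot)$ to obtain the one-sided derivatives $f_v'(v)$ (coming from $[v^-,v]$) and $f_w'(v)$ (coming from each $[v,w]$ with $w^-=v$): each equals the common factor $\frac{(\alpha-1)\log\mathsf{q}}{\mathsf{q}^{\alpha-1}-1}$ times, respectively, $f(v)-f(v^-)$ and $\mathsf{q}^{\alpha-1}\bigl(f(w)-f(v)\bigr)$. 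Substituting into \eqref{eq:bif-LapT}, cancelling that factor, and using $\mathsf{a}=\beta\mathsf{p}\,\mathsf{q}^{\alpha-1}$ together with the fact that $v$ has exactly $\mathsf{p}$ forward neighbours, the condition reduces to $(1+\mathsf{a})f(v)=f(v^-)+\frac{\mathsf{a}}{\mathsf{p}}\sum_{w^-=v}f(w)$, that is, exactly \eqref{eq:restrict-T}, which holds by assumption. Uniqueness of the harmonic extension is then immediate, since on each edge any harmonic function is forced to be the unique solution of the associated two-point boundary value problem.

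The part that really needs care is the bookkeeping in the Kirchhoff computation: keeping the identification of $[v^-,v]$ with $[\mathfrak{h}(v)-1,\mathfrak{h}(v)]$ and the orientations of the edges consistent, so that the one-sided derivatives entering \eqref{eq:bif-LapT} have the correct signs and the factor $\beta$ appears on the right side; and treating the degenerate case $\alpha=1$ (where $\mathsf{q}^{\alpha-1}-1$ vanishes) separately, either by repeating the computation with the solutions $1,t$ or by passing to the limit $\alpha\to1$. Apart from that, everything is routine ODE theory and an application of the mean value property.
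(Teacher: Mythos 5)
Your proposal is correct and follows the paper's own route: part (i) is the mean-value property with $U=N(v)^o$ (which is what the paper means by ``clear from Definition~\ref{def:harmonic}''), and part (ii) solves the two-point ODE on each edge and then verifies the Kirchhoff condition, which is exactly the computation the paper leaves to the reader with the remark ``can also be verified directly.'' Your explicit derivative bookkeeping and the reduction $(1+\mathsf{a})f(v)=f(v^-)+\frac{\mathsf{a}}{\mathsf{p}}\sum_{w^-=v}f(w)$ check out, so this simply fills in the details the paper omits.
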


\begin{proof}
(i) is clear from Definition \ref{def:harmonic} of harmonic functions.

\smallskip

(ii) Given $f$ on $V({\mathbb T})$ and any vertex, we see from
\eqref{eq:bif-LapT} that  the extension to ${\mathbb T}$ within any edge $[v^-,v]$ must satisfy the
boundary value problem  
$\frac{1}{(\log \mathsf{q})^2} f'' +  \frac{\alpha-1}{\log \mathsf{q}} f' =0$,
with the given values $f(v^-)$ and $f(v)$ at the endpoints of the interval corresponding to 
that edge. Straightforward computation leads to the solution \eqref{eq:extend-T}.
It must satisfy the bifurcation condition of \eqref{eq:bif-LapT}, which can also be 
verified directly.  
\end{proof}

After this warmup, our main focus is on the induced process 
$(X_{\tau(n)})_{n\ge 0}$ on $\operatorname{\sf LT}$. We want to relate its positive harmonic functions with
the positive harmonic functions for our Laplacian $\Delta = \Delta_{\alpha,\beta}$ on ${\mathsf{HT}}$. 
Here, a function $f : \operatorname{\sf LT} \to \mathbb{R}$ is harmonic for that process if 
$f(\mathfrak{z}) = \mathsf{E}_{\mathfrak{z}}\bigl(f(X_{\tau(1)}\bigr)$ for any $\mathfrak{z} \in \operatorname{\sf LT}$. In view of the previous
section, this means that
\begin{equation}\label{eq:LTharmonic}
f(\mathfrak{z}) = \int_{\partial \Omega_v} f\, d\mu_{\mathfrak{z}}^{\Omega_v} \quad \text{for every } \;
v \in V({\mathbb T}) \; \text{ and } \mathfrak{z} \in \mathsf{L}_v\,.
\end{equation}
The analogue of Lemma \ref{lem:restrict-T} is by no means as straightforward; the reason is
that on ${\mathsf{HT}}$, the strips are non-compact so that one can make long sideways detours
in between two successive bifurcation lines. 

Let us define the probability measure 
\begin{equation}\label{eq:mu}
\mu = \mu_{\mathfrak{o}}^{\Omega} \quad \text{on} \quad \partial \Omega = \bigcup_{v: v \sim o} \mathsf{L}_v\,.
\end{equation}
Group invariance  yields that for any $\mathfrak{z} = (z,v) \in \operatorname{\sf LT}$  and $\mathfrak{g} \in \mathcal{A}$ with 
$\mathfrak{g} \mathfrak{o} = \mathfrak{z}$, we have the convolution identity
$\mu_{\mathfrak{z}}^{\Omega_v} = \delta_{\mathfrak{g}}*\mu$. 
Since $\mathcal{A}$ acts transitively on $\operatorname{\sf LT}$, the transition probabilities of 
the induced process are completely determined by $\mu$, so that we can consider
it as the random walk on $\operatorname{\sf LT}$ with law $\mu$. In view of this, we call the harmonic
functions of \eqref{eq:LTharmonic} the \emph{$\mu$-harmonic functions} on $\operatorname{\sf LT}$. They are
necessarily continuous, since $\mu$ has a continuous density. 

We note here that the measure $\mu$ is invariant under the stabiliser $\mathcal{A}_{\mathfrak{o}}$ of our 
reference point $\mathfrak{o}$ in $\mathcal{A}$. For sliced hyperbolic plane,
i.e., when $\mathsf{p}=1$, the group is $\operatorname{\sf Aff}({\mathbb H},\mathsf{q})$ and that stabiliser is trivial. When 
$\mathsf{p}  \ge 2$, the stabiliser is the compact subgroup of $\mathcal{A}$ consisting of all
$\mathfrak{g} = (\text{\sl id}_{{\mathbb H}},\gamma)$, where $\gamma \in \operatorname{\sf Aff}({\mathbb T})$ with $\gamma o = o$.

We also note the following fact that was anticipated in \cite{BSSW2}: by Theorem \ref{thm:poiss}, 
the measure $\mu$ has a continuous density
with respect to Lebesgue measure on the union of the boundary lines $\mathsf{L}_v\,$, $v \sim o$.
Also, by Lemma \ref{lem:measure1} it has exponential moment with respect to the 
\emph{Euclidean} distance on each of those lines. 

\smallskip

For positive harmonic functions, the following 
uniform \emph{Harnack inequality} is a consequence of \cite[Theorem 4.2]{BSSW1}.

\begin{pro}\label{pro:harnack}
For every non-negative $\Delta_{\alpha,\beta}$-harmonic function $h$ on (the whole of) ${\mathsf{HT}}$
and each $d>0$,
$$
h(\mathfrak{z}') \le C_d \,h(\mathfrak{z})\,,\quad \text{whenever}\quad \mathsf{d}_{{\mathsf{HT}}}(\mathfrak{z},\mathfrak{z}') \le d\,,
$$
where $C_d > 1$ is such that $C_d \to 1$ when $d \to 0$.
\end{pro}
In particular, every non-negative harmonic function must be strictly positive in
every point.
The next lemma will be important for understanding the relation between
$\Delta$-harmonic functions on ${\mathsf{HT}}$ and $\mu$-harmonic functions on $\operatorname{\sf LT}$.
It makes use of the straightforward extensions of Theorem \ref{thm:poiss} and 
Theorem \ref{thm:dir} to sets of the form $\Omega_T$ and $\Omega_T^{\varepsilon}\,$ for a 
full subtree $T$ of ${\mathbb T}$, as clarified in Remark \ref{rmk:density}.

\begin{lem}\label{lem:harnack-strip} Let $T$ be a full (compact) subtree of ${\mathbb T}$.
For every $d > 0$ and $0 < \varepsilon < \log \mathsf{q}$ there is
a constant $M_d = M_{d,T,\varepsilon} > 0$, with $M_d \to 1$ as $d \to 0$, such that for any positive harmonic function $h$ 
on $\Omega_T$,
$$
h(\mathfrak{z}') \le M_d\, h(\mathfrak{z})\,,\quad\text{whenever} \quad \mathfrak{z}, \mathfrak{z}' \in
\Omega_T^{\varepsilon}\quad\text{with}\quad
|\operatorname{\text{\sl \!Re}} \mathfrak{z}' - \,\operatorname{\text{\sl \!Re}} \mathfrak{z}| \le d\,.
$$
In particular, in this situation, the exit distributions from the set 
$\Omega_T$ satisfy
$\mu_{\mathfrak{z}'}^{\Omega_T} \le M_d\, \mu_{\mathfrak{z}}^{\Omega_T}\,.$
\end{lem}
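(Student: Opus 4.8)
The plan is to prove the Harnack-type inequality on $\Omega_T^{\varepsilon}$ by a standard overlapping-balls chain argument, exploiting the horizontal translation invariance of $\Delta_{\alpha,\beta}$ to make the constant $M_d$ independent of the (horizontal) location of $\mathfrak{z},\mathfrak{z}'$, and then deducing the inequality on exit distributions by the Poisson representation of Theorem \ref{thm:poiss} (in the form valid for $\Omega_T$, cf.\ Remark \ref{rmk:density}).

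First I would cover the compact ``fundamental slab'' $\{ \mathfrak{z} \in \Omega_T^{\varepsilon} : |\operatorname{\text{\sl \!Re}} \mathfrak{z}| \le 1 \}$ (or any fixed horizontal window of width comparable to $d$) by finitely many metric balls of radius $d$ whose doubles are contained in $\Omega_T$ (this uses $0 < \varepsilon < \log\mathsf{q}$, so that $\Omega_T^{\varepsilon}$ stays at hyperbolic distance $\varepsilon$ from $\partial\Omega_T$ and the doubled balls do not reach the boundary). On each such ball the uniform Harnack inequality of Proposition \ref{pro:harnack} applies to $h$ — note $h$ is harmonic on a neighbourhood of the closed ball inside $\Omega_T$, which is all that Proposition \ref{pro:harnack} requires, since it is a local statement derived from \cite[Theorem 4.2]{BSSW1}. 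Chaining these balls along a path from $\mathfrak{z}$ to $\mathfrak{z}'$ whose length is controlled by $|\operatorname{\text{\sl \!Re}}\mathfrak{z}' - \operatorname{\text{\sl \!Re}}\mathfrak{z}| \le d$ (together with the bounded ``vertical extent'' of $\Omega_T^{\varepsilon}$, which is a fixed compact set in the tree direction) gives $h(\mathfrak{z}') \le M_d\, h(\mathfrak{z})$ with $M_d = M_{d,T,\varepsilon} \to 1$ as $d \to 0$. The point of restricting to $\Omega_T^{\varepsilon}$ is precisely to keep a uniform buffer between the points and the boundary lines, so that the number of balls in the chain, and hence $M_d$, does not blow up; the horizontal translation invariance (the horizontal translations belong to $\mathcal{A}$) makes the estimate independent of how far out in the $\operatorname{\text{\sl \!Re}}$-direction $\mathfrak{z}$ lies.

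For the statement about exit distributions, I would invoke the Poisson representation: by Theorem \ref{thm:poiss} extended to $\Omega_T$ (Remark \ref{rmk:density}(b)), $\mu_{\mathfrak{z}}^{\Omega_T}$ has a continuous positive density $\Pi^{\Omega_T}(\mathfrak{z},\cdot)$ with respect to the boundary measure $\mathbf{m}^{\partial\Omega_T}$, and for each fixed boundary point $\mathfrak{u} \in \partial\Omega_T$ the map $\mathfrak{z} \mapsto \Pi^{\Omega_T}(\mathfrak{z},\mathfrak{u})$ is itself a positive harmonic function on $\Omega_T$ (it is the normal derivative of the Green kernel in its pole variable, which is harmonic away from the pole, and the pole $\mathfrak{u}$ lies on $\partial\Omega_T$, hence outside $\Omega_T^{\varepsilon}$). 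Applying the first part of the lemma to this harmonic function gives $\Pi^{\Omega_T}(\mathfrak{z}',\mathfrak{u}) \le M_d\,\Pi^{\Omega_T}(\mathfrak{z},\mathfrak{u})$ for all $\mathfrak{u} \in \partial\Omega_T$ whenever $\mathfrak{z},\mathfrak{z}' \in \Omega_T^{\varepsilon}$ with $|\operatorname{\text{\sl \!Re}}\mathfrak{z}' - \operatorname{\text{\sl \!Re}}\mathfrak{z}| \le d$, and integrating against $\mathbf{m}^{\partial\Omega_T}$ yields $\mu_{\mathfrak{z}'}^{\Omega_T} \le M_d\,\mu_{\mathfrak{z}}^{\Omega_T}$ as measures.

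**The main obstacle** I anticipate is the bookkeeping needed to make the chain of balls genuinely uniform over the unbounded horizontal direction: one must check that $\Omega_T^{\varepsilon}$ is, up to horizontal translation, covered by translates of a single compact piece, and that the doubled $d$-balls used in the chain remain inside $\Omega_T$ even near the bifurcation lines that lie in the \emph{interior} of $\Omega_T$ (where the geometry is genuinely non-manifold). This is where the hypothesis $\varepsilon < \log\mathsf{q}$ and the ``fullness'' of $T$ enter: fullness guarantees that each boundary strip of $\Omega_T$ has its full hyperbolic ``thickness'' $\log\mathsf{q}$ available, so that an $\varepsilon$-buffer can indeed be carved out and the singular bifurcation lines interior to $\Omega_T$ are handled by the local Harnack inequality of Proposition \ref{pro:harnack}, which already incorporates the Kirchhoff conditions. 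Once this is set up, the estimate $M_d \to 1$ follows from the same property of $C_d$ in Proposition \ref{pro:harnack} together with the fact that the number of balls in the chain stays bounded as $d \to 0$ (one can take a chain of length $O(1)$ of balls of radius $d$, since the horizontal displacement is $\le d$).
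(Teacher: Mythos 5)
Your argument is essentially the paper's: reduce to a compact window $\Omega_{T,r}^{\varepsilon}$, invoke the local Harnack estimate from \cite[Theorem 4.2]{BSSW1} (this is the source of Proposition \ref{pro:harnack}; your chain-of-balls is a hands-on rendering of that estimate), and extend uniformly in the horizontal direction by $\mathcal{A}$-invariance. Your derivation of the exit-measure inequality — observing that $\mathfrak{z}\mapsto \Pi^{\Omega_T}(\mathfrak{z},\mathfrak{u})$ is positive harmonic for each fixed $\mathfrak{u}\in\partial\Omega_T$ and integrating — is a correct and welcome elaboration of a step the paper leaves implicit.

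There is, however, a concrete gap in the way you try to obtain $M_d\to 1$. Two points $\mathfrak{z},\mathfrak{z}'\in\Omega_T^{\varepsilon}$ constrained only by $|\operatorname{\text{\sl \!Re}}\mathfrak{z}'-\operatorname{\text{\sl \!Re}}\mathfrak{z}|\le d$ can still be at a fixed, nonvanishing vertical distance: $\Omega_T^{\varepsilon}$ has vertical extent bounded by a constant depending on $T$ and $\varepsilon$, not by $d$. Hence a chain of balls of radius $d$ joining $\mathfrak{z}$ to $\mathfrak{z}'$ needs on the order of $1/d$ balls, not $O(1)$, and the resulting product of Harnack constants $C_d^{O(1/d)}$ has no reason to tend to $1$. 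Indeed, with only the horizontal constraint, the limit $M_d\to1$ would force $h(\mathfrak{z})=h(\mathfrak{z}')$ whenever $\operatorname{\text{\sl \!Re}}\mathfrak{z}=\operatorname{\text{\sl \!Re}}\mathfrak{z}'$, i.e.\ every positive harmonic function would depend only on $\operatorname{\text{\sl \!Re}}$, which is false. This is really a defect of the lemma as stated rather than of your method: the paper's own proof establishes only the existence of a finite $M_d$ (constant on a compact slab, uniform by translation), which is all that is ever used in Proposition \ref{pro:harnack-sliced} and Theorem \ref{thm:restrict-HT}. You should not claim to prove $M_d\to1$ by the chain argument; either drop the assertion or add the hypothesis that $\mathsf{d}_{{\mathsf{HT}}}(\mathfrak{z},\mathfrak{z}')\le d$, under which your chain has genuinely $O(1)$ balls of radius comparable to $d$ and Proposition \ref{pro:harnack} gives the limit directly.
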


\begin{proof} In analogy with the set $\Omega_{v,r}$ of \eqref{eq:Omv}, let us first take 
$\Omega_{T,r} = \{ \mathfrak{z} \in \Omega_T : |\operatorname{\text{\sl \!Re}} \mathfrak{z}| < r \}$ and its subset
$\Omega_{T,r}^{\varepsilon}\,$.
The intrinsic metric of $\Omega_T$ is equivalent with the naturally defined Euclidean 
metric of that set (i.e., the metric has to be extended across bifurcation lines using straight
geodesics). Then it follows once more from \cite[Theorem 4.2]{BSSW1} that every 
positive harmonic function on
$\Omega_{T,r}$ satisfies a Harnack inequality as proposed for $\mathfrak{z}, \mathfrak{z}'$ in 
$\Omega_{T,r}^{\varepsilon}\,$. Note that the whole of $\Omega_T$, resp. $\Omega_T^{\varepsilon}$ can be covered by 
translates of $\Omega_{T,r}\,$, resp. $\Omega_{T,r}^{\varepsilon}\,$.
By group invariance of our Laplacian, that inequality holds on each translate with
the same constants $M_d\,$.
\end{proof}

For the next Harnack inequality, we need some of the features of the geometry
of ${\mathsf{HT}}$, ${\mathbb T}$ and ${\mathbb H}$, as outlined above after Proposition \ref{pro:projections}.

\begin{pro}\label{pro:harnack-sliced} There are constants $C_1\,, C_2  > 0$ such that 
for every non-negative $\mu$-harmonic function $f$ on $\operatorname{\sf LT}$ and all
$\mathfrak{z}, \mathfrak{z}' \in \operatorname{\sf LT}$, 
$$
f(\mathfrak{z}') \le C_1\, e^{C_2 \,\mathsf{d}_{{\mathsf{HT}}}(\mathfrak{z},\mathfrak{z}')}\,f(\mathfrak{z})\,.
$$
\end{pro}

\begin{proof}
\emph{Step 1.}
Recall that $\Omega$ is the star of strips in $\operatorname{\sf LT}$ with middle line $\mathsf{L}_o\,$.
Since $\mu$ has a continuous density, the assumption of $\mu$-harmonicity yields that 
$f$ is continuous, and it is $\mu_{\mathfrak{z}}^{\Omega}$-integrable on 
$\partial \Omega$ for every $\mathfrak{z} \in \Omega$. Therefore 
$$
h(\mathfrak{z}) = \int_{\partial \Omega} f\, d\mu_{\mathfrak{z}}^{\Omega}
$$
defines a positive $\Delta$-harmonic extension of $f$ to the closure of $\Omega$. Since $f$ 
is $\mu$-harmonic, $h$ also coincides with $f$ on the middle line $\mathsf{L}_o$ of $\Omega$.
The function $h$ satisfies the Harnack inequality of Lemma \ref{lem:harnack-strip}. 
This yields the following for $h$ and consequently for $f$.
$$
\text {If}\quad \mathfrak{z}, \mathfrak{z}' \in \mathsf{L}_0  \quad\text{and}\quad |\operatorname{\text{\sl \!Re}} \mathfrak{z} - \,\operatorname{\text{\sl \!Re}} \mathfrak{z}'| 
\le 2\mathsf{q}^2 \quad
\text{then}\quad f(\mathfrak{z}') \le M\, f(\mathfrak{z})\,,
$$
where $M = M_{2\mathsf{q}^2}\,$. For any $v \in V({\mathbb T})$, we can map $\Omega$ to the star of 
strips $\Omega_v$ with middle line $\mathsf{L}_v$ by a mapping $\mathfrak{g} = (g,\gamma) \in \mathcal{A}$ 
such that $gz = \mathsf{q}^k\,z$ with $k = \mathfrak{h}(v)$, and $\gamma \in \operatorname{\sf Aff}({\mathbb T})$. It preserves 
both $\Delta$- and $\mu$-harmonicity. 
That mapping dilates horizontal Euclidean distances by the factor of $\mathsf{q}^k$. 
Therefore we also have the following statement for any $v \in V({\mathbb T})$ with the same constant 
$M$.
\begin{equation}\label{eq:local1}
\text {If}\quad \mathfrak{z}, \mathfrak{z}' \in \mathsf{L}_v \quad\text{and}\quad |\operatorname{\text{\sl \!Re}} \mathfrak{z} - \,\operatorname{\text{\sl \!Re}} \mathfrak{z}'| 
\le 2\mathsf{q}^{k+2} \quad \text{then}\quad f(\mathfrak{z}') \le M\, f(\mathfrak{z})\,.
\end{equation}

\smallskip
\noindent
\emph{Step 2.} We now use this for $V({\mathbb T}) \ni v \sim o$ and the associated point 
$\mathfrak{v} = (\mathfrak{i}\, \mathsf{q}^{\mathfrak{h}(v)}, v) \in \mathsf{L}_v \subset \partial \Omega$, an endpoint of
the ``star'' $N_0(o)$ as defined in \eqref{eq:hr-vr}. 
Let $I_v = \{ \mathfrak{z} \in \mathsf{L}_v : |\operatorname{\text{\sl \!Re}} \mathfrak{z}| \le 2\mathsf{q}^{2 + \mathfrak{h}(v)}\} = \mathsf{L}_{v,r}$ with 
$r = 2\mathsf{q}^{2 + \mathfrak{h}(v)}$. (Note that $\mathfrak{h}(v) = \pm 1$.)
Combining the assumption of $\mu$-harmonicity with \eqref{eq:local1},
$$
f(\mathfrak{o}) = \int_{\partial \Omega} f \, d\mu \ge \int_{I^v} f \, d\mu \ge
\mu(I_v) \, f(\mathfrak{v}) / M.
$$
By Corollary \ref{cor:stren}, $\mu(I_v) > 0$.
Re-using \eqref{eq:local1} on each $\mathsf{L}_v\,$, 
\begin{equation}\label{eq:local2}
\text {if}\quad \mathfrak{z} \in \mathsf{L}_o \cup \bigcup_{v \sim o} \mathsf{L}_v \quad\text{and}\quad 
|\operatorname{\text{\sl \!Re}} \mathfrak{z}| \le 2\mathsf{q} \quad \text{then}\quad f(\mathfrak{z}) \le \overline M\, f(\mathfrak{i}\,)\,,
\end{equation}
where $\overline M = M / \min \{\widetilde\mu(I_v) : v \sim o \}$.

\smallskip

\noindent
\emph{Step 3.}
Now we let $u \in V({\mathbb T})$ and $\mathfrak{z} \in \mathsf{L}_u$. Set $k = \mathfrak{h}(u)$, Then we can map $\mathfrak{i}\,$ to $\mathfrak{z}$
by a mapping $\mathfrak{g}=(g,\gamma) \in \mathcal{A}$, where
$g=\bigl(\begin{smallmatrix} \mathsf{p}^k & \operatorname{\text{\sl \!Re}} \mathfrak{z} \\ 0 & 1 \end{smallmatrix}\bigr) 
\in \operatorname{\sf Aff}({\mathbb H},\mathsf{q})$ and $\gamma \in \operatorname{\sf Aff}({\mathbb T})$ is such that $\gamma o = v$.
Thus, \eqref{eq:local2}  transforms into 
\begin{equation}\label{eq:local3}
\begin{gathered}
\text {If}\quad \mathfrak{z}, \mathfrak{z}' \in \mathsf{L}_u \cup \bigcup_{v \sim u} \mathsf{L}_v \quad\text{and}\quad 
|\operatorname{\text{\sl \!Re}} \mathfrak{z} - \,\operatorname{\text{\sl \!Re}} \mathfrak{z}'|  \le 2\mathsf{q}^{k+1} \\
\text{then}\quad 
f(\mathfrak{z}') \le \widetilde M\, f(\mathfrak{z})\,, \quad\text{where} \quad \widetilde M = 2\overline M.
\end{gathered}
\end{equation}

\smallskip

\noindent
\emph{Step 4.}
Now let $\mathfrak{z}=(z,v), \mathfrak{z}'=(z',v') \in \operatorname{\sf LT}$, and consider the situation where $v \curlywedge v' =v$,
that is, $v'$ lies ``above'' $v$ or is $=v$. Then we can choose some $\xi \in \partial^+{\mathbb T}$
such that $v \in \geo{\varpi\,\xi}$ and $\mathfrak{z}, \mathfrak{z}' \in {\mathbb H}_{\xi}\,$, as outlined in the
description of the metric \eqref{eq:metric} of ${\mathsf{HT}}$. A geodesic arc $\geo{\mathfrak{z}\,\mathfrak{z}'}$ (depending
on $\xi$) is then
given by the (isometric) image in ${\mathbb H}_{\xi}$ of the geodesic arc $\geo{z\,z'}$ in 
${\mathbb H}$.\footnote{This explains that geodesics in ${\mathsf{HT}}$ are in general not unique, because there
are different feasible choices for $\xi$ that may lead to different geodesics, according
to the mutual position of $z$ and $z'$ in ${\mathbb H}$.} 
The arc $\geo{\mathfrak{z}\,\mathfrak{z}'}$ has a highest point 
$\tilde{\mathfrak z} = (\tilde z, \tilde w)$, 
where $\tilde z = z \wedge z'$.
Our arc may cross more than one strip. It meets $\operatorname{\sf LT} \cap \,{\mathbb H}_{\xi}$
in successive points $\mathfrak{z}_j = (z_j,v_j)$, $j=0,\dots,n$, where $\mathfrak{z}_0=\mathfrak{z}$ and $\mathfrak{z}_n = \mathfrak{z}'$.
All $v_j$ are vertices on the ray $\geo{v\,\xi}$ in ${\mathbb T}$. Except possibly for one ``top'' sub-arc
that contains $\tilde{\mathfrak z}$, any $\geo{\mathfrak{z}_{j-1}\,\mathfrak{z}_j}$ crosses one strip: in the initial,
ascending part (if it is present), $v_j^- = v_{j-1}$ and that strip is $S_{v_j}\,$, while in the
terminal, descending part (if present),  $v_j = v_{j-1}^-$ and that strip is $S_{v_{j-1}}\,$.
In both cases (i.e. except possibly for the top arc), 
$\mathsf{d}_{{\mathsf{HT}}}(\mathfrak{z}_{j-1}\,,\mathfrak{z}_j) \ge \log \mathsf{q}$ (the distance between any two adjacent bifurcation lines),
and we get
$$
\mathsf{d}_{{\mathsf{HT}}}(z,z') \ge (n-1) \log\mathsf{q}\,.
$$
For any $j \in \{ 1, \dots, n\}$, let $k(j) = \mathfrak{h}(v_{j-1} \curlywedge v_{j})$ be the level
(horocycle number) of the lower boundary line of the strip in which the $j$-th sub-arc  
is contained. Therefore, the ${\mathbb H}$-coordinates of $\mathfrak{z}_{j-1}$ and $\mathfrak{z}_j$ satisfy
$\operatorname{\text{\sl \!Im}} z_{j-1}\wedge z_j \le \mathsf{q}^{k(j)+1}$.
Thus by \eqref{eq:RE-IM} 
$$
|\operatorname{\text{\sl \!Re}} \mathfrak{z}_j- \, \operatorname{\text{\sl \!Re}} \mathfrak{z}_{j+1}| = |\operatorname{\text{\sl \!Re}} z_j- \, \operatorname{\text{\sl \!Re}} z_{j+1}| \le 2\mathsf{q}^{k(j)+1}\,,
$$
and \eqref{eq:local3} yields that $f(\mathfrak{z}_{j+1}) \le \widetilde M\, f(\mathfrak{z}_j)$. 
Putting the sub-arcs together,
$$
f(\mathfrak{z}') \le \widetilde M^n\, f(\mathfrak{z}) \le 
\exp \Bigl( C_2 \,\bigl(\mathsf{d}_{{\mathsf{HT}}}(\mathfrak{z},\mathfrak{z}')+1\bigr)\Bigr)\, f(\mathfrak{z})\,, 
\quad\text{where}\quad C_2= \frac{\log \widetilde M}{\log\mathsf{q}}
$$ 
\smallskip
\noindent
\emph{Step 5.} At last, let $\mathfrak{z}=(z,v), \mathfrak{z}'=(z',v') \in \operatorname{\sf LT}$ be such that 
$v \curlywedge v' \notin \{ v, v'\}$. Then \eqref{eq:metric} shows that any geodesic arc
$\geo{\mathfrak{z}\, \mathfrak{z}'}$ in ${\mathsf{HT}}$ decomposes into two sub-arcs $\geo{\mathfrak{z}\, \mathfrak{z}''}$
and $\geo{\mathfrak{z}''\, \mathfrak{z}'}$, each of which is as in Step~4. Therefore
$$
f(\mathfrak{z}') \le \exp\Bigl( C_2 \,\bigl(\mathsf{d}_{{\mathsf{HT}}}(\mathfrak{z}',\mathfrak{z}'')+1\bigr)\Bigr)\, f(\mathfrak{z}'')
\le \exp\Bigl( C_2 \,\bigl(\mathsf{d}_{{\mathsf{HT}}}(\mathfrak{z}',\mathfrak{z})+2\bigr)\Bigr)\, f(\mathfrak{z})
$$
This concludes the proof.
\end{proof}

Now consider a set $\Omega_T\,$, where $T$ is a full subtree of ${\mathbb T}$,
as defined in Remark \ref{rmk:density}(b). 
We note the following facts.

\begin{rmk}\label{rmk:exit}
If the starting point $\mathfrak{z}$ of $(X_t)$ lies on
a line $\mathsf{L}_v$ in the interior of $\Omega_T$ (that is, $v$ is an interior vertex of $T$)
then the exit distribution $\mu_{\mathfrak{z}}^{\Omega_T}$ of $(X_t)$ from $\Omega_T$
is the same as the exit distribution of the random walk $(X_{\tau(n)})$ from $\Omega_T\,$. 

In view of Remark
\eqref{rmk:density}, it has the same properties as those of lemmas \ref{lem:measure1} 
(with different $\rho$) and \ref{lem:zero}, as well as Proposition \ref{pro:one} 
and corollaries \ref{cor:stren} and \ref{thm:dir}. 
\end{rmk}

Then we have the following (easier) analogue of Theorem \ref{thm:poiss}.

\begin{pro}\label{pro:poisson}
Let $f$ be a $\mu$-harmonic function on $\overline\Omega_T\,$,
that is, it satisfies \eqref{eq:LTharmonic} on every line $\mathsf{L}_v \subset \Omega_T\,$.
Suppose furthermore that $f$ 
satisfies the growth condition
\begin{equation}\label{eq:expgro-f} 
|f(\mathfrak{z})| \le C_1 \,e^{C_2\, \mathsf{d}_{{\mathsf{HT}}}(\mathfrak{z},\mathfrak{o})} 
\end{equation}
on $\partial\Omega_T\,$, where $C_1\,, C_2 > 0$.
Then for every $\mathfrak{z} \in \Omega_T \cap \operatorname{\sf LT}$,
$$
f(\mathfrak{z}) = \int_{\partial \Omega_T} f \, d \mu_{\mathfrak{z}}^{\Omega_T}\,.
$$
\end{pro}

\begin{proof}
We first observe that \eqref{eq:logbound} from the proof of Theorem \ref{thm:poiss} 
also holds on $\overline \Omega_T$ with constant 
$D = \max \{ \mathsf{d}_{{\mathbb T}}(w,o) : w \in T \}\cdot \log \mathsf{q}\,$.

This time, let $\tau = \tau^{\Omega_T}$ be the exit time from $\Omega_T\,$.
Let the starting point be $\mathfrak{z} \in \Omega_T \cap \operatorname{\sf LT}\,$.
Consider the process $X_n^T = X_{\min\{\tau(n), \tau\}}$ on
$\overline\Omega_T \cap \operatorname{\sf LT}\,$. This is the induced random walk on $\operatorname{\sf LT}$
stopped upon reaching $\partial \Omega_T\,$.
Then $\bigl(f(X_n^T)\bigr)_{n \ge 0}$ is a martingale by $\mu$-harmonicity of~$f$.
Note that $\tau = \tau(\boldsymbol{n})$ for some random $\boldsymbol{n}$.
Thus, for non-random $n$ tending to $\infty$, 
$$
f(X_n^T) = f(X_{\tau})\,\mathbf{1}_{[\tau \le \tau(n)]} + f(X_{\tau(n)})\,\mathbf{1}_{[\tau > \tau(n)]}
\to f(X_{\tau})\quad\text{almost surely.}
$$
As in \eqref{eq:Rmax}, we consider the random variable 
$R_{\max} = \max \{ |\operatorname{\text{\sl \!Re}} X_t| : t \le \tau\}$ (only that this time, $\tau$ is more general). 
By Lemma \ref{lem:measure1}, resp. its variant for $\Omega_T\,$, we have again that it
has finite moments of all orders.
Combining the growth condition \eqref{eq:expgro-f} with \eqref{eq:logbound},
we get 
$$
|f(X_n^T)| \le C_1\, e^{C_2 D}\, \bigl( R_{\max} + 1 \bigr)^{2 C_2}\,,  
$$
an integrable upper bound. Thus, by dominated convergence and the Martingale 
property, $f(\mathfrak{z}) = \mathsf{E}_{\mathfrak{z}}\bigl(f(X_{\tau})\bigr)$, which is just another form
of the proposed formula.
\end{proof}

Now we can formulate and prove the main result of this section, which is the
analogue of Lemma \ref{lem:restrict-T} for ${\mathsf{HT}}$.

\begin{thm}\label{thm:restrict-HT}
\emph{(i)} If a non-negative function $h$ on ${\mathsf{HT}}$
is $\Delta_{\alpha,\beta}$-harmonic then its restriction to
$\operatorname{\sf LT}$ is $\mu$-harmonic. 
\\[5pt]
\emph{(ii)}
Conversely, for any non-negative $\mu$-harmonic function $f$ on $\operatorname{\sf LT}$, its unique 
extension to a  
$\Delta_{\alpha,\beta}$-harmonic function on ${\mathsf{HT}}$ is given in each open 
strip $\mathsf{S}_v^o$ of ${\mathsf{HT}}$ by
\begin{equation}\label{eq:extend-H}
h(\mathfrak{z}) = \mathsf{E}_{\mathfrak{z}}\bigl(f(X_{\tau(1)})\bigr) 
= \int_{\partial \mathsf{S}_v} f\, d\mu_{\mathfrak{z}}^{\mathsf{S}_v^o}\,,
\end{equation}
where (recall) $\mu_{\mathfrak{z}}^{\mathsf{S}_v^o}$ is the exit distribution from 
$\mathsf{S}_v^o$ of $(X_t)$ starting at 
$\mathfrak{z} \in \mathsf{S}_v^o\,$.
\end{thm}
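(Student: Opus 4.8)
The plan is to prove (i) and (ii) separately, using throughout the Poisson and Dirichlet theory of \S\ref{sec:rectangular} in the extended forms collected in Remark \ref{rmk:density}. For (i), suppose $h\ge 0$ is $\Delta_{\alpha,\beta}$-harmonic on ${\mathsf{HT}}$. By Proposition \ref{pro:harmonic}, $h\in\mathcal{C}^\infty({\mathsf{HT}})$, so its restriction to $\overline\Omega_v$ lies in $\mathcal{D}^2_{\alpha,\beta}(\overline\Omega_v)$ for every $v\in V({\mathbb T})$ (the only bifurcation line interior to $\Omega_v$ is $\mathsf{L}_v$, where $h$ already satisfies \eqref{eq:bif}); and, chaining the uniform Harnack inequality of Proposition \ref{pro:harnack} along a shortest path emanating from $\mathfrak{o}$, $h$ satisfies the exponential growth bound \eqref{eq:expgro}. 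Hence Theorem \ref{thm:poiss}, in the form valid for the sets $\Omega_v$ (Remark \ref{rmk:density}(a)), yields $h(\mathfrak{z})=\int_{\partial\Omega_v}h\,d\mu^{\Omega_v}_{\mathfrak{z}}$ for all $\mathfrak{z}\in\Omega_v$; specializing $\mathfrak{z}$ to the interior line $\mathsf{L}_v$ is precisely \eqref{eq:LTharmonic}, so $h|_{\operatorname{\sf LT}}$ is $\mu$-harmonic. (Even this ``easy'' direction needs the non-compact Poisson representation, not merely Lemma \ref{lem:poiss}.)

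For (ii), let $f\ge 0$ be $\mu$-harmonic on $\operatorname{\sf LT}$, and define $h$ by $h=f$ on $\operatorname{\sf LT}$ and by \eqref{eq:extend-H} on each open strip $\mathsf{S}_v^o$. I would first check that $h$ is well-defined and continuous. The function $f$ is continuous because $\mu$ has a continuous density; by the Harnack estimate of Proposition \ref{pro:harnack-sliced} together with the elementary inequality $\mathsf{d}_{{\mathsf{HT}}}(\mathfrak{z},\mathfrak{o})\le 2\log(|\operatorname{Re}\mathfrak{z}|+1)+D$ valid on any fixed finite union of bifurcation lines, $f$ grows at most polynomially in $|\operatorname{Re}\cdot|$ along $\partial\mathsf{S}_v=\mathsf{L}_v\cup\mathsf{L}_{v^-}$, whereas the exit distribution $\mu^{\mathsf{S}_v^o}_{\mathfrak{z}}$ has exponentially decaying tails in $|\operatorname{Re}\cdot|$ (the single-strip analogue of Corollary \ref{cor:stren}, Remark \ref{rmk:density}(e)); so the integral in \eqref{eq:extend-H} is finite. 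Applying Theorem \ref{thm:dir} to the single strip $\mathsf{S}_v^o$ with the continuous, exponentially bounded boundary datum $f|_{\partial\mathsf{S}_v}$ shows that $h|_{\mathsf{S}_v^o}$ is harmonic on $\mathsf{S}_v^o$ and extends continuously to $\mathsf{S}_v$ with boundary values $f$; since along each $\mathsf{L}_v$ all the extensions coming from the adjacent strips agree with $f$, these pieces glue to a continuous, non-negative function on ${\mathsf{HT}}$.

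The core of (ii) is to upgrade ``harmonic in each open strip'' to ``harmonic on all of ${\mathsf{HT}}$'', i.e.\ to produce the smoothness and the Kirchhoff conditions \eqref{eq:bif} across the bifurcation lines. Fix $v\in V({\mathbb T})$ and put $\hat h_v(\mathfrak{z})=\int_{\partial\Omega_v}f\,d\mu^{\Omega_v}_{\mathfrak{z}}$; the claim is that $h=\hat h_v$ on all of $\Omega_v=\mathsf{S}_v^o\cup\mathsf{L}_v\cup\bigcup_{u:\,u^-=v}\mathsf{S}_u^o$. On $\mathsf{L}_v$ both sides equal $f$ by \eqref{eq:LTharmonic}. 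On a strip $\mathsf{S}\subset\Omega_v$, a Brownian motion started at $\mathfrak{z}\in\mathsf{S}$ satisfies $X_{\tau(1)}\in\partial\mathsf{S}$, where one part of $\partial\mathsf{S}$ lies in $\partial\Omega_v$ and the other part is the interior line $\mathsf{L}_v$; conditioning by the strong Markov property at $\tau(1)$, and using once again that $\hat h_v=f$ on $\mathsf{L}_v$, gives $\hat h_v(\mathfrak{z})=\mathsf{E}_{\mathfrak{z}}\bigl(f(X_{\tau^{\Omega_v}})\bigr)=\mathsf{E}_{\mathfrak{z}}\bigl(f(X_{\tau(1)})\bigr)=h(\mathfrak{z})$. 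Now Theorem \ref{thm:dir} for the sets $\Omega_v$ (Remark \ref{rmk:density}(a)) tells us that $\hat h_v$, hence $h$, is harmonic on $\Omega_v$; by Proposition \ref{pro:harmonic} this means $h\in\mathcal{C}^\infty(\Omega_v)$, $h$ satisfies \eqref{eq:bif} on $\mathsf{L}_v$, and $\Delta_{\alpha,\beta}h=0$ in the strips of $\Omega_v$. Since every point and every bifurcation line of ${\mathsf{HT}}$ is interior to some $\Omega_v$, these are purely local statements, so $h$ satisfies them on all of ${\mathsf{HT}}$, and Proposition \ref{pro:harmonic} with $\Omega={\mathsf{HT}}$ gives that $h$ is $\Delta_{\alpha,\beta}$-harmonic on ${\mathsf{HT}}$.

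It remains to prove uniqueness: if $\tilde h\ge 0$ is $\Delta_{\alpha,\beta}$-harmonic on ${\mathsf{HT}}$ and $\tilde h|_{\operatorname{\sf LT}}=f$, then on each $\mathsf{S}_v^o$ it is harmonic, continuous up to $\partial\mathsf{S}_v$ with boundary values $f$, and --- being non-negative and harmonic --- of at most exponential growth by Proposition \ref{pro:harnack}; the uniqueness clause in Theorem \ref{thm:dir} (for the strip) forces $\tilde h=h$ on $\mathsf{S}_v^o$, hence $\tilde h=h$ everywhere. The principal difficulty, as flagged before the statement, is precisely that the strips are unbounded, allowing ``long sideways detours''; every step at which one would argue classically --- finiteness of \eqref{eq:extend-H}, the limiting passage from $\Omega_{v,r}$ to $\Omega_v$ built into Theorems \ref{thm:poiss} and \ref{thm:dir}, and the uniqueness above --- is made to work by the interplay of two quantitative inputs: the uniform exponential growth of non-negative $\Delta_{\alpha,\beta}$- and $\mu$-harmonic functions (Propositions \ref{pro:harnack} and \ref{pro:harnack-sliced}) and the exponentially small mass that Brownian motion puts far out in the horizontal direction (Lemma \ref{lem:measure1} and its strip and subtree analogues in Remark \ref{rmk:density}).
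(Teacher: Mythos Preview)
Your proof of (i) is essentially identical to the paper's. For (ii), your argument is correct but follows a genuinely different route. The paper builds the extension $h$ via an exhaustion of ${\mathbb T}$ by full subtrees $T(k)$: it defines $h_k(\mathfrak{z})=\int_{\partial\Omega_{T(k)}}f\,d\mu_{\mathfrak{z}}^{\Omega_{T(k)}}$, uses Proposition~\ref{pro:poisson} to see that $h_k=f$ on all interior bifurcation lines, and then invokes Lemma~\ref{lem:harnack-strip} together with the uniqueness clause of Theorem~\ref{thm:dir} on $\Omega_{T(k)}$ to prove the consistency $h_{k+1}|_{\overline\Omega_{T(k)}}=h_k$; the formula \eqref{eq:extend-H} is then recovered only \emph{a posteriori}. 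You instead define $h$ directly by \eqref{eq:extend-H} and, for each vertex $v$, compare it with the Poisson integral $\hat h_v$ over $\partial\Omega_v$ via the strong Markov property at $\tau(1)$, exploiting that $\hat h_v=f$ on $\mathsf{L}_v$ by $\mu$-harmonicity; this identifies $h$ locally with a function that Theorem~\ref{thm:dir} certifies as harmonic on $\Omega_v$, and Proposition~\ref{pro:harmonic} then globalises.

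Your approach is more economical: it never leaves the basic sets $\Omega_v$, needs neither the exhaustion nor Proposition~\ref{pro:poisson} nor Lemma~\ref{lem:harnack-strip}, and delivers the explicit formula \eqref{eq:extend-H} from the outset rather than after the fact. The paper's route, by contrast, establishes the intermediate Proposition~\ref{pro:poisson} (a Poisson representation for $\mu$-harmonic functions over arbitrary $\Omega_T$), which is of independent interest and parallels Theorem~\ref{thm:poiss}. One small point worth making explicit in your write-up: the strong Markov identity $\hat h_v(\mathfrak{z})=\mathsf{E}_{\mathfrak{z}}\bigl(\hat h_v(X_{\tau(1)})\bigr)$ requires that $f$ be $\mu_{\mathfrak{w}}^{\Omega_v}$-integrable for $\mathfrak{w}\in\mathsf{L}_v$, not just for the starting point $\mathfrak{z}$; this follows from the same growth-versus-tail estimate you already cite, but it is the hinge of the argument.
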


\begin{proof}
(i) If $h$ is non-negative
and is $\Delta_{\alpha,\beta}$-harmonic on ${\mathsf{HT}}$ 
then it satisfies the Harnack
inequality \ref{pro:harnack}. Therefore, it also satisfies the growth condition
\eqref{eq:expgro}, and Theorem \ref{thm:poiss} applies. 
That theorem provides (i) for points
$\mathfrak{z} \in \mathsf{L}_o$. By transitivity of the action of the group $\mathcal{A}$ on $\operatorname{\sf LT}$
as well as group invariance of the Laplacian, this holds  along any line 
$\mathsf{L}_v\,$.

\smallskip 

(ii) We consider the following exhaustion of ${\mathbb T}$ by finite, full subtrees $T(k)$.
Let $o_k$ be the $k$-th predecessor of the root $o$ of ${\mathbb T}$, that is,
the element on $\geo{o\,\varpi}$ at distance $k$ from $o$. 
Let $W(k)$ be the set of all vertices on the horocycle $H_k$ at distance $2k-1$
from $o_{k-1}$. (That is, they have $o_{k-1}$ as their $(2k-1)$-st predecessor.)
Then $T(k)$ is the union of all geodesic segments $\geo{o_k\, w}$, where $w \in W(k)$.
We consider $\Omega_{T(k)}\,$, with
$$
\partial \Omega_{T(k)} = \mathsf{L}_{o_k} \cup \bigcup_{w \in W(k)} \mathsf{L}_w\,.
$$
Now let $f$ be a positive $\mu$-harmonic function on $\operatorname{\sf LT}$. Then it is continuous
(since $\mu$ has a continuous density by Theorem \ref{thm:poiss}) and satisfies \eqref{eq:expgro-f}
by Proposition \ref{pro:harnack-sliced}. In particular, we can apply Theorem \ref{thm:dir}, resp., its
extended version according to Remark \ref{rmk:density}  
to the restriction of $f$ to $\partial \Omega_{T(k)}\,$, and
$$
h_k(\mathfrak{z}) = \int_{\partial \Omega_{T(k)}} f \, d\mu_{\mathfrak{z}}^{\Omega_{T(k)}}\,,\quad \mathfrak{z} \in 
\Omega_{T(k)}
$$ 
defines a harmonic function on $\Omega_{T(k)}$ which is continuous up to
the boundary values of $f$. By Proposition \ref{pro:poisson}, $h_k = f$ 
on every bifurcation line $\mathsf{L}_v$ with $v \in V\bigl(T(k)\bigr)$.

Now consider $h_{k+1}\,$: to this function, we can apply  
Lemma \ref{lem:harnack-strip} on the set
$\overline\Omega_{{\mathbb T}(k)}\! \subset \Omega_{{\mathbb T}(k+1)}^{\varepsilon}$: there is a constant
$M(k)$ such that $h_{k+1}(\mathfrak{z}) \le M(k) h_{k+1}(\tilde{\mathfrak z})$ for every 
$\mathfrak{z} \in \overline\Omega_{{\mathbb T}(k)}$, where $\tilde{\mathfrak z}$ is the element on $\mathsf{L}_o$
with $\operatorname{\text{\sl \!Re}} \tilde{\mathfrak z} = \operatorname{\text{\sl \!Re}} \mathfrak{z}$. But on $\mathsf{L}_o\,$, the function $h_{k+1}$
coincides with $f$, which satisfies the growth condition \eqref{eq:expgro-f}
by Proposition \ref{pro:poisson}. Therefore also $h_{k+1}$ satisfies 
\eqref{eq:expgro} on the whole of $\overline\Omega_{{\mathbb T}(k)}$, with possibly 
modified constants. But now we can apply the uniqueness statement of 
Theorem \ref{thm:dir}, which yields that on $\overline\Omega_{{\mathbb T}(k)}$,
the function $h_{k+1}$ is the Poisson integral of $f$ taken over $\partial\Omega_{{\mathbb T}(k)}\,$.

In other words, the restriction of $h_{k+1}$ to $\overline\Omega_{{\mathbb T}(k)}$ is $h_k\,$.
If $\mathfrak{z} \in {\mathsf{HT}}$ then there is $k$ such that 
$\mathfrak{z} \in \Omega_{T(k)}\,$, and setting $h(\mathfrak{z}) = h_k(\mathfrak{z})$, we get a well-defined,
positive $\Delta$-harmonic function on ${\mathsf{HT}}$ that coincides with $f$ on $\operatorname{\sf LT}$.

Uniqueness is straightforward: if $\tilde h$ is any other positive $\Delta$-harmonic 
function that extends $f$, then by Proposition \ref{pro:harnack} it satisfies 
the growth condition \eqref{eq:expgro} on the whole of ${\mathsf{HT}}$. We can apply once more the uniqueness
statement of Theorem \ref{thm:dir} on any $\Omega_{T(k)}$ and get that $\tilde h$
concides with $h_k=h$ on the latter set, for any $k$.  

\smallskip
  
The statement (ii) itself now follows a posteriori.
\end{proof}

\begin{rmk}\label{rmk:Hb}
The results of this section also apply to the Laplacian $\Delta_{\alpha,\beta}^{{\mathbb H}}$ on 
${\mathbb H}(\mathsf{q}) = {\mathsf{HT}}(\mathsf{q},1)$, where the metric tree is $\mathbb{R}$ with vertex set ${\mathbb Z}$. In this case,
$\operatorname{\sf LT}$ is better denoted  $\operatorname{\sf L}\!{\mathbb H} = \bigcup_k \mathsf{L}_k \subset {\mathbb H}$, the union of all bifurcation lines 
in upper half plane. Note that $\operatorname{\sf Aff}({\mathbb H},\mathsf{q})$ acts simply transitively on $\operatorname{\sf L}\!{\mathbb H}$,
so that this group can be identified with $\operatorname{\sf L}\!{\mathbb H}$. The random walk $(Z_{\tau(n)})$
is governed the the probability measure $\widetilde \mu$ on $\mathsf{L}_1 \cup \mathsf{L}_{-1}$
which is the exit distribution of $(Z_t)$ from the double strip $\mathsf{S}_0 \cup \mathsf{S}_1$
when $Z_0 = \mathfrak{i}\,$. 

When we start with $\Delta_{{\mathsf{HT}}}^{\alpha,\beta}\,$ on ${\mathsf{HT}}(\mathsf{q},\mathsf{p})$, then in view of Proposition 
\ref{pro:projections} we need to consider $\Delta_{\alpha,\beta\mathsf{p}}^{{\mathbb H}}$ here. In this
situation, $\widetilde \mu$ is the image of $\mu$ under the projection $\pi^{{\mathbb H}}$.
What we have proved for $\mu$ on ${\mathsf{HT}}$ is also true for $\widetilde \mu$ on ${\mathbb H}$.
Of course, we shall call the associated harmonic functions on $\operatorname{\sf L}\!{\mathbb H} \equiv \operatorname{\sf Aff}({\mathbb H},\mathsf{q})$ the 
$\widetilde\mu$-harmonic functions.
\end{rmk}

\section{Decomposition of positive harmonic functions}\label{sec:decomp}

The aim of this section is to provide tools for giving a complete description
of all positive $\Delta$-harmonic functions on the whole of ${\mathsf{HT}}$. 
For this purpose, we shall need an understanding of the geometric boundary
at infinity of ${\mathsf{HT}}$. We quickly review the basic facts; a detailed
description can be found in \cite[Section~5]{BSSW2}. 

The boundary (space of ends) $\partial {\mathbb T}$ was described in the lines after
\eqref{eq:RE-IM}. The end compactification is $\widehat{\mathbb T} = {\mathbb T} \cup \partial T$. The topology
is such that a sequence $(y_n)$ converges to a point $y$ in the metric tree ${\mathbb T}$ if it 
converges with respect to the original metric, while it converges to an end $\xi \in \partial T$ 
if the geodesics $\geo{o\,z_n}$ and $\geo{o,\xi}$ share initial pieces whose graph
length tends to infinity. The boundary of ${\mathbb H}(\mathsf{q})$, which is just hyperbolic half plane
metrically, is $\partial{\mathbb H} = \mathbb{R} \cup \{\boldsymbol{\infty}\}$. The topology of 
$\widehat{\mathbb H}= {\mathbb H} \cup \partial {\mathbb H}$ is the classical hyperbolic compactification; it is maybe
better understood when one considers the Poincar\'e disk model instead of upper half plane,
where $\widehat{\mathbb H}$ is the Euclidean closure (the closed unit disk), and $\boldsymbol{\infty}$ corresponds
to the north pole, while the remaining part of the unit circle corresponds to the lower
boundary line $\mathbb{R}$ of upper half plane. 

Now ${\mathsf{HT}}(\mathsf{q},\mathsf{p})$ is a subspace of the direct product ${\mathbb H}(\mathsf{q}) \times {\mathbb T}_{\mathsf{p}}\,$, so that
the most natural geometric compactification $\widehat{\mathsf{HT}}$ of ${\mathsf{HT}}$ is its closure
in $\widehat {\mathbb H} \times \widehat {\mathbb T}$. As mentioned, a detailed description of the resulting
boundary $\partial\!{\mathsf{HT}}$ and of the different ways of convergence to the boundary can be
found in \cite{BSSW2}.

\smallskip

Transience of $(X_t)$ implies that the \emph{Green kernel} 
\begin{equation}\label{eq:greenkernel}
\mathbf{g}_{\alpha,\beta}(\mathfrak{w},\mathfrak{z}) = \mathbf{g}(\mathfrak{w},\mathfrak{z}) =\int_0^\infty 
\mathbf{h}_{\alpha,\beta}(t,\mathfrak{w},\mathfrak{z})\,dt \quad 
(\mathfrak{w}, \mathfrak{z} \in {\mathsf{HT}}\,,\;\mathfrak{w} \ne\mathfrak{z}) 
\end{equation}
is  strictly positive and finite, compare with \cite{BSSW1}. The function
$\mathfrak{w} \mapsto \mathbf{g}(\mathfrak{w},\mathfrak{z})$ is harmonic on ${\mathsf{HT}} \setminus \{\mathfrak{z}\}$.
Along with Proposition \ref{pro:harnack},  the next uniform Harnack inequalities
for the Green kernel are again a consequence of \cite[Theorem 4.2]{BSSW1}.

\begin{pro}\label{pro:harnack-green}
With the same constants $C_d\,$ as in Proposition \ref{pro:harnack}  
(in particular, $C_d \to 1$ when $d \to 0$)
$$
\frac{\mathbf{g}(\mathfrak{w},\mathfrak{z}')}{\mathbf{g}(\mathfrak{w},\mathfrak{z})} \le C_d \quad\text{and}\quad
\frac{\mathbf{g}(\mathfrak{z}',\mathfrak{w})}{\mathbf{g}(\mathfrak{z},\mathfrak{w})} \le C_d \,,
$$
whenever $\mathsf{d}_{{\mathsf{HT}}}(\mathfrak{z},\mathfrak{z}') \le d$ and 
$\min \{ \mathsf{d}_{{\mathsf{HT}}}(\mathfrak{z},\mathfrak{w}), \mathsf{d}_{{\mathsf{HT}}}(\mathfrak{z}',\mathfrak{w}) \} \ge 10 (d+1)$.
\end{pro}

Positive harmonic functions can be described via the \emph{Martin boundary.}
The regularity properties of our Green kernel allow us to use the following
approach, which is a special case of the potential theory (more precisely, 
Brelot theory) outlined in the book of
{\sc Constantinescu and Cornea}~\cite[Chapter 11]{CC}. See also
{\sc Brelot}~\cite{Bre}, and  
in the Markov chain setting, compare with {\sc Revuz}~\cite[Chapter 7]{Rev}. 

We fix the ``origin'' $\mathfrak{o} = (\mathfrak{i}\,,o)$ of ${\mathsf{HT}}$ as the reference point.
A function $f \in \mathcal{H}^+$ is called \emph{minimal} if $f(\mathfrak{o})=1$ and
whenever $f_1 \in \mathcal{H}^+$ is such that $f \ge f_1$ then $f_1/f$ is constant.
The minimal harmonic functions are the extremal points of the convex set
$\{ f \in {\mathsf{HT}}^+ : f(\mathfrak{o})=1 \}$, which is a base of the convex cone
${\mathsf{HT}}^+$ of all positive harmonic functions. Every positive harmonic function is a 
convex combination (with respect to a Borel measure) of minimal ones. 
To make this more precise, define the \emph{Martin kernel} by
\begin{equation}
k(\mathfrak{w},\mathfrak{z}) = \frac{\mathbf{g}(\mathfrak{w},\mathfrak{z})}{\mathbf{g}(\mathfrak{o},\mathfrak{z})}
\end{equation}
Then the Martin compactification is the unique (up to homeomorphism)
minimal compactification of the underlying space (i.e., ${\mathsf{HT}}$) such that
all functions $k(\mathfrak{w},\cdot)$, $\mathfrak{w} \in {\mathsf{HT}}$, extend continuously.
The extended kernel is also denoted by $k(\cdot,\cdot)$.
The \emph{Martin boundary} $\mathcal{M}=\mathcal{M}(\Delta)$ is the set of ideal boundary points
in this compactification (i.e., the new points added ``at infinity'').
It is compact and metrizable.

From the general theory \cite{CC}, it is known that 
every minimal harmonic 
function arises as a boundary function $k(\cdot,\boldsymbol{\xi})$, where $\boldsymbol{\xi} \in \mathcal{M}$.
The (Borel) set $\mathcal{M}_{\min}$ of all $\boldsymbol{\xi} \in \mathcal{M}$ for which $k(\cdot,\boldsymbol{\xi})$
is minimal is called the \emph{minimal Martin boundary.}
The \emph{Poisson-Martin} representation theorem says that for every 
$h \in \mathcal{H}^+$ there is a unique Borel measure $\nu^h$ on $\mathcal{M}$ such that
$\nu(\mathcal{M} \setminus \mathcal{M}_{\min}) = 0$ and
\begin{equation}\label{eq:PoisMart}
h = \int_{\mathcal{M}} k(\cdot,\boldsymbol{\xi})\,d\nu^h(\boldsymbol{\xi})\,.
\end{equation}
If $h^{{\mathbb H}}$ is an $\Delta^{{\mathbb H}}$-harmonic function on ${\mathbb H}$ then by Proposition 
\ref{pro:projections}, the function $h^{{\mathbb H}}\circ \pi^{{\mathbb H}}$ 
is $\Delta$-harmonic on ${\mathsf{HT}}$, and the analogous property holds when we lift a
$\Delta^{{\mathbb T}}$-harmonic function $h^{{\mathbb T}}$ from ${\mathbb T}$ to ${\mathsf{HT}}$. 
Combining both cases, the function $h(\mathfrak{z}) = h^{{\mathbb H}}(z) + h^{{\mathbb T}}(w)$ 
(where $\mathfrak{z} = (z,w) \in {\mathsf{HT}}$) is also $\Delta$-harmonic on ${\mathsf{HT}}$.

The first main purpose of this section and the machinery set up so far is to prove
the following.

\begin{thm}\label{thm:decompose}
Every positive $\Delta_{\alpha,\beta}$-harmonic $h$ on ${\mathsf{HT}}$ has the form
$$
h(\mathfrak{z}) = h^{{\mathbb H}}(z) + h^{{\mathbb T}}(w)\,, \quad \mathfrak{z} = (z,w) \in {\mathsf{HT}}\,,
$$
where $h^{{\mathbb H}}$ is non-negative $\Delta^{{\mathbb H}}_{\alpha,\beta\mathsf{p}}$-harmonic 
on ${\mathbb H}$ and $h^{{\mathbb T}}$ is non-negative $\Delta^{{\mathbb T}}_{\alpha,\beta}$-harmonic on ${\mathbb T}$.   
\end{thm}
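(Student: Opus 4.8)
The plan is to use the Poisson--Martin representation \eqref{eq:PoisMart} and to reduce the statement to the assertion that every minimal positive $\Delta_{\alpha,\beta}$-harmonic function on ${\mathsf{HT}}$ is \emph{pure}, i.e. factors through $\pi^{{\mathbb H}}$ or through $\pi^{{\mathbb T}}$. Granting this, the minimal Martin boundary splits into two Borel pieces $\mathcal{M}_{\min}=\mathcal{M}^{{\mathbb H}}\sqcup\mathcal{M}^{{\mathbb T}}$ according to which projection $k(\cdot,\boldsymbol{\xi})$ factors through (Borel since, by continuity of $k$, it suffices to test the equalities $k(\mathfrak{w},\boldsymbol{\xi})=k(\mathfrak{w}',\boldsymbol{\xi})$ for countably many pairs). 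Writing $k(\cdot,\boldsymbol{\xi})=\phi_{\boldsymbol{\xi}}\circ\pi^{{\mathbb H}}$ for $\boldsymbol{\xi}\in\mathcal{M}^{{\mathbb H}}$ and $k(\cdot,\boldsymbol{\xi})=\psi_{\boldsymbol{\xi}}\circ\pi^{{\mathbb T}}$ for $\boldsymbol{\xi}\in\mathcal{M}^{{\mathbb T}}$, one splits \eqref{eq:PoisMart} over the two pieces and sets $h^{{\mathbb H}}(z)=\int_{\mathcal{M}^{{\mathbb H}}}\phi_{\boldsymbol{\xi}}(z)\,d\nu^h(\boldsymbol{\xi})$, $h^{{\mathbb T}}(w)=\int_{\mathcal{M}^{{\mathbb T}}}\psi_{\boldsymbol{\xi}}(w)\,d\nu^h(\boldsymbol{\xi})$, which gives $h=h^{{\mathbb H}}\circ\pi^{{\mathbb H}}+h^{{\mathbb T}}\circ\pi^{{\mathbb T}}$. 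Here $h^{{\mathbb H}}$ and $h^{{\mathbb T}}$ are non-negative, finite at the reference point (their values at $\mathfrak{i}$ and $o$ add up to $h(\mathfrak{o})$), and harmonic for the respective operators because their lifts to ${\mathsf{HT}}$ are integrals of the $\Delta_{\alpha,\beta}$-harmonic functions $k(\cdot,\boldsymbol{\xi})$ against the finite measures $\nu^h|_{\mathcal{M}^{{\mathbb H}}}$, $\nu^h|_{\mathcal{M}^{{\mathbb T}}}$; the only point to check is the drift parameter, which is a direct computation --- a $w$-free $\Delta_{\alpha,\beta}$-harmonic function on ${\mathsf{HT}}$ satisfies the interior equation of $\Delta^{{\mathbb H}}_{\alpha,\beta\mathsf{p}}$ inside each strip, and, being $w$-free, its bifurcation condition \eqref{eq:bif} collapses the $\mathsf{p}$ identical summands into one carrying the factor $\beta\mathsf{p}$, which is the Kirchhoff condition of $\Delta^{{\mathbb H}}_{\alpha,\beta\mathsf{p}}$ (cf. Proposition \ref{pro:projections}(a)); dually a function of $z$ alone reduces to $\Delta^{{\mathbb T}}_{\alpha,\beta}$-harmonicity via \eqref{eq:bif-LapT} and Proposition \ref{pro:projections}(b).

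To prove purity of a minimal $h=k(\cdot,\boldsymbol{\xi})$, realise it as a Martin limit $k(\cdot,\boldsymbol{\xi})=\lim_n\mathbf{g}(\cdot,\mathfrak{z}_n)/\mathbf{g}(\mathfrak{o},\mathfrak{z}_n)$ with $\mathfrak{z}_n=(z_n,w_n)\to\infty$ in ${\mathsf{HT}}$. Passing to a subsequence and using the horocyclic constraint $\mathfrak{h}(w_n)=\log_{\mathsf{q}}\operatorname{\text{\sl \!Im}} z_n$, there are only three regimes (cf. the description of $\widehat{\mathsf{HT}}\subset\widehat{\mathbb H}\times\widehat{\mathbb T}$ in \cite[\S 5]{BSSW2}): \emph{(a)} $\operatorname{\text{\sl \!Im}} z_n\to\infty$, whence $w_n$ runs to an end $\xi\in\partial^*{\mathbb T}$; \emph{(b)} $\operatorname{\text{\sl \!Im}} z_n\to0$, whence $w_n\to\varpi$ and $z_n$ runs to a point of $\mathbb{R}\subset\partial{\mathbb H}$; \emph{(c)} $\operatorname{\text{\sl \!Im}} z_n$ stays in a compact subinterval of $(0,\infty)$, forcing $|\operatorname{\text{\sl \!Re}} z_n|\to\infty$ and $w_n$ eventually constant. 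Consider regime (a). Fix $\mathfrak{w}=(z,w)$ and $\mathfrak{w}'=(z',w)$ with the same ${\mathbb T}$-coordinate, and a vertex $v$ far out on the ray $\geo{w\,\xi}$, so that for $n$ large $\geo{w\,w_n}$ passes through $v$ and hence every path of $(X_t)$ from $\mathfrak{w}$, or from $\mathfrak{w}'$, to $\mathfrak{z}_n$ must first meet $\mathsf{L}_v$ (the projection $\pi^{{\mathbb T}}(X_t)$ is a continuous diffusion on the metric tree, Proposition \ref{pro:projections}(b), so it visits the vertex $v$). Balayage then gives, letting $n\to\infty$, the averaging identity $k(\mathfrak{w},\boldsymbol{\xi})=\int_{\mathsf{L}_v}k(\cdot,\boldsymbol{\xi})\,d\lambda_{\mathfrak{w}}^{\mathsf{L}_v}$, and likewise for $\mathfrak{w}'$, where $\lambda_{\mathfrak{w}}^{\mathsf{L}_v}$ is the (sub-probability) hitting distribution of $\mathsf{L}_v$ from $\mathfrak{w}$. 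Its total mass $\mathsf{Pr}_{\mathfrak{w}}[\text{hit }\mathsf{L}_v]$ depends only on $\pi^{{\mathbb T}}(\mathfrak{w})$, hence is the same for $\mathfrak{w}$ and $\mathfrak{w}'$; and by the self-similar geometry of ${\mathsf{HT}}$ together with the continuous-density and exponential-tail estimates of Lemma \ref{lem:measure1} and Corollary \ref{cor:stren} (and their tree-valued analogues), after horizontal rescaling by $\mathsf{q}^{\mathfrak{h}(v)}$ the densities of $\lambda_{\mathfrak{w}}^{\mathsf{L}_v}$ and $\lambda_{\mathfrak{w}'}^{\mathsf{L}_v}$ are copies of one fixed profile shifted by the bounded amounts $\operatorname{\text{\sl \!Re}} z$ and $\operatorname{\text{\sl \!Re}} z'$. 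Feeding this into the two averaging identities, and using the uniform Harnack inequality \ref{pro:harnack} together with the exponential growth bound \eqref{eq:expgro} to control $k(\cdot,\boldsymbol{\xi})$ on $\mathsf{L}_v$, one obtains $k(\mathfrak{w},\boldsymbol{\xi})/k(\mathfrak{w}',\boldsymbol{\xi})\to1$ as $\mathfrak{h}(v)\to\infty$, i.e. $k(\cdot,\boldsymbol{\xi})$ is a function of $w$ alone. Regime (b) is entirely symmetric, cutting instead at a horocycle level set $\{\pi^{\mathbb{R}}(X_t)=\ell\}$ far below (its hitting time depends only on $\pi^{\mathbb{R}}(X_t)$, Markov by Proposition \ref{pro:projections}(c), and its ${\mathbb H}$-component is common to two starting points with the same ${\mathbb H}$-coordinate), showing $k(\cdot,\boldsymbol{\xi})$ is a function of $z$ alone; and in regime (c) one cuts at a vertical slice $\{\operatorname{\text{\sl \!Re}}\pi^{{\mathbb H}}(X_t)=c\}$ (Proposition \ref{pro:projections}(a)) and uses horizontal translation invariance, again concluding that $k(\cdot,\boldsymbol{\xi})$ is a function of $z$.

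The hard part, and the step I expect to require real work, is this washing-out: the quantitative statement that the hitting distribution of a receding ``bottleneck'' --- a bifurcation line $\mathsf{L}_v$ with $\mathfrak{h}(v)\to\pm\infty$, or a horocycle level set, or a vertical slice --- seen from two points that agree in the relevant projection, has the same total mass and, after the natural rescaling, asymptotically the same continuous density. This is exactly where the exponential-tail and continuous-density results of \S 3 (Lemma \ref{lem:measure1}, Corollary \ref{cor:stren}, Remark \ref{rmk:density}) and the uniform Harnack inequalities of \S 5 (Propositions \ref{pro:harnack} and \ref{pro:harnack-green}) come in; it may be cleaner to transport the whole argument to $\operatorname{\sf LT}$ via Theorem \ref{thm:restrict-HT}, where the tree-projection of the induced walk is the explicit nearest-neighbour walk \eqref{eq:RWtree} so that the bottleneck estimates are more transparent, and then to lift back using Lemma \ref{lem:restrict-T} and Remark \ref{rmk:Hb}. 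With the purity claim established, the decomposition $h=h^{{\mathbb H}}\circ\pi^{{\mathbb H}}+h^{{\mathbb T}}\circ\pi^{{\mathbb T}}$ follows from the first paragraph.
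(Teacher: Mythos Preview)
Your overall architecture---reduce to purity of minimal harmonic functions via the Poisson--Martin representation, then split the integral---is exactly what the paper does (its Theorem~\ref{thm:minimal} is precisely your purity claim). Your regime~(b) is the paper's Case~1, and although you phrase it via balayage at $\mathsf{L}_u$, the reason the two hitting distributions agree is nothing but the horocyclic tree isometry $\mathfrak{g}_\psi=(\text{\sl id}_{{\mathbb H}},\psi)$ that fixes $\mathsf{L}_u$ and swaps $\mathfrak{w},\mathfrak{w}'$; the paper applies that isometry directly to the Green kernel and avoids balayage altogether.

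There is, however, a genuine error in regime~(c). You claim that when $\operatorname{\text{\sl \!Im}} z_n$ stays bounded and $|\operatorname{\text{\sl \!Re}} z_n|\to\infty$ the limit is a function of $z$; in fact it is a function of $w$. Your vertical-slice argument does not yield either conclusion: for $\mathfrak{w}=(z,w)$ and $\mathfrak{w}'=(z,w')$ with the same ${\mathbb H}$-coordinate but different tree coordinates, there is no symmetry or washing-out that forces the hitting distributions of $\{\operatorname{\text{\sl \!Re}}=c\}$ to coincide or even to approach one another as $c\to\infty$, because the tree components of the two processes live on genuinely different branches. (Your trichotomy also omits the case $\mathfrak{h}(w_n)$ bounded while $w_n\to\varpi$, though this is easily absorbed into the Case~1 argument.)

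The paper handles your regimes~(a) and~(c) together in a single stroke that bypasses balayage entirely. It uses the horizontal translation $\mathfrak{g}_b\in\mathcal{B}({\mathbb H})$: since $\inf_n\operatorname{\text{\sl \!Im}} z_n=y_0>0$, one has $\mathsf{d}_{{\mathsf{HT}}}(\mathfrak{g}_b\mathfrak{z}_n,\mathfrak{z}_n)\le\mathsf{d}_{{\mathbb H}}(b+\mathfrak{i}\,y_0,\mathfrak{i}\,y_0)=d_b$ uniformly in $n$, so Proposition~\ref{pro:harnack-green} and $\mathcal{A}$-invariance of $G$ give $h(\mathfrak{g}_b\,\cdot)\le C_{d_b}h$. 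Minimality then forces $h(\mathfrak{g}_b\,\cdot)/h$ to be a constant $c(b)$, and letting $\operatorname{\text{\sl \!Im}} z\to\infty$ in Proposition~\ref{pro:harnack} (where $\mathsf{d}(\mathfrak{g}_b\mathfrak{z},\mathfrak{z})\to 0$) shows $c(b)=1$. Thus $h$ is $\mathcal{B}({\mathbb H})$-invariant, hence a function of $w$. This use of minimality to upgrade a Harnack inequality to an exact identity is the key step you are missing; it replaces all the ``washing-out'' estimates you anticipate needing.
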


For the proof, we shall use the crucial fact that $\Delta_{\alpha,\beta}$ 
is invariant under the group $\mathcal{A} = \mathcal{A}(\mathsf{q},\mathsf{p})$ defined in Theorem \ref{thm:isogroup},  
that is, $\mathfrak{t}_{\mathfrak{g}}(\Delta_{\alpha,\beta} f) = \Delta_{\alpha,\beta} (\mathfrak{t}_{\mathfrak{g}} f)$ 
for all $f \in \text{\rm Dom}(\Delta_{\alpha,\beta})$ and
all $\mathfrak{g} \in \mathcal{A}$, where 
\begin{equation}\label{eq:translation}
\mathfrak{t}_{\mathfrak{g}} f(\mathfrak{z}) = f(\mathfrak{g}\mathfrak{z}).
\end{equation}
 Let
$$
G(\mathfrak{w},\mathfrak{z}) = \mathbf{g}(\mathfrak{w},\mathfrak{z})\phi_{\alpha,\beta}(\mathfrak{z})\,,
$$
where $\phi_{\alpha,\beta}$ is as in \eqref{eq:mab}.  
We have $k(\mathfrak{w},\mathfrak{z}) = G(\mathfrak{w},\mathfrak{z})/G(\mathfrak{o},\mathfrak{z})$, 
and it follows from \eqref{eq:groupinvariance} that 
$$
G(\mathfrak{g}\mathfrak{w},\mathfrak{g}\mathfrak{z}) = G(\mathfrak{w},\mathfrak{z}) 
\quad\text{for all}\quad \mathfrak{w}, \mathfrak{z} \in {\mathsf{HT}} \;(\mathfrak{w} \ne \mathfrak{z})\; \text{and}\; 
\mathfrak{g} \in \mathcal{A}\,.
$$
We shall use specific subgroups of
$\mathcal{A}$. First, in the tree, consider once more the $k$-th predecessor $o_k$ of the root
vertex. Thus, $o_{k} \in H_{-k}\,$. Now define
$$
\mathcal{B}({\mathbb T}) = \{ \mathfrak{g}_{\,\psi\,} = (\text{\sl id}_{{\mathbb H}},\psi) : 
\psi \in \operatorname{\sf Aff}({\mathbb T})\,,\; \psi o_{k} = o_{k}\;\text{for some}\; k \in {\mathbb N} \}\,.
$$
This is the embedding into $\mathcal{A}$ of the \emph{horocyclic subgroup} 
of $\operatorname{\sf Aff}({\mathbb T})$, which consists of all isometries of the tree that leave each
horocycle invariant. The latter is the analogue of the group of all ``horizontal''
translations in $\operatorname{\sf Aff}({\mathbb H},\mathsf{q})$, that is, of all mappings $g_b = (1,b)$,
where $b \in \mathbb{R}$, acting on ${\mathbb H}$ by $z \mapsto z+b$. Thus, we define
$$
\mathcal{B}({\mathbb H}) = \{ \mathfrak{g}_{\,b\,} = (g_b,\text{\sl id}_{{\mathbb T}}) : b \in \mathbb{R} \}\,,
$$
which is of course isomorphic with the additive group $\mathbb{R}$ acting on ${\mathbb H}$.

\begin{thm}\label{thm:minimal}
If $h$ is a minimal $\Delta_{\alpha,\beta}$-harmonic function on ${\mathsf{HT}}$, then (at least)
one of the following holds. 
\begin{enumerate}
 \item[\rm (a)] There is a minimal $\Delta^{{\mathbb H}}_{\alpha,\beta\mathsf{p}}$-harmonic 
function $h^{{\mathbb H}}$ on ${\mathbb H}$
such that $h(z,w) = h^{{\mathbb H}}(z)$ for all $\mathfrak{z} = (z,w) \in {\mathsf{HT}}$, or
\item[\rm (b)] there is a minimal $\Delta^{{\mathbb T}}_{\alpha,\beta}$-harmonic 
function $h^{{\mathbb T}}$ on ${\mathbb T}$
such that\\
$h(z,w) = h^{{\mathbb T}}(w)$ for all $\mathfrak{z} = (z,w) \in {\mathsf{HT}}$.
\end{enumerate}
\end{thm}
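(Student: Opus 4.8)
The plan is to exploit the group $\mathcal{A}$ acting on the minimal harmonic function $h$, in the spirit of the classical analysis of minimal harmonic functions on direct-product-like structures. First I would record the standard fact from Martin boundary theory that for a minimal harmonic function $h$ and any $\mathfrak{g}$ in the isometry group, the translate $\mathfrak{t}_{\mathfrak{g}} h$ is again (a positive multiple of) a minimal harmonic function, so there is a multiplicative character-like behaviour: $\mathfrak{t}_{\mathfrak{g}} h = c_h(\mathfrak{g})\, h$ on a suitable subgroup. Concretely, I would look at the two abelian subgroups $\mathcal{B}({\mathbb H})$ and $\mathcal{B}({\mathbb T})$ introduced just before the statement, which consist of isometries that fix every horocycle and hence act ``orthogonally'' to the cocycle $\pi^{\mathbb{R}}$. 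The key point is that $\mathcal{B}({\mathbb H})$ and $\mathcal{B}({\mathbb T})$ together act transitively on each horocyclic slice $\{\pi^{\mathbb{R}} = t\}$ of ${\mathsf{HT}}$, and they commute with the ``vertical'' direction of the Laplacian.

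Second, I would fix a minimal $h$ with $h(\mathfrak{o}) = 1$ and consider the function $\mathfrak{g} \mapsto \mathfrak{t}_{\mathfrak{g}} h / h$ for $\mathfrak{g}$ in $\mathcal{B}({\mathbb H}) \times \mathcal{B}({\mathbb T})$ (more precisely, for the subgroup of $\mathcal{A}$ generated by these two). Using $G(\mathfrak{g}\mathfrak{w},\mathfrak{g}\mathfrak{z}) = G(\mathfrak{w},\mathfrak{z})$ and the Martin-kernel description, minimality of $h$ forces $\mathfrak{t}_{\mathfrak{g}} h = \chi(\mathfrak{g})\, h$ for a multiplicative function $\chi$ on that subgroup, because the translate is both harmonic, positive, and dominated (after normalization) in the cone by a multiple of $h$ via the Harnack inequality of Proposition \ref{pro:harnack}. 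This is the standard ``eigenfunction of translations'' argument: a minimal harmonic function that is translation-quasi-invariant must be an exact eigenfunction. Then $\chi$ restricted to $\mathcal{B}({\mathbb H}) \cong \mathbb{R}$ is a continuous character, hence of the form $b \mapsto e^{s b}$; restricted to $\mathcal{B}({\mathbb T})$ it is a character of the horocyclic group of the tree.

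Third, from the eigenfunction property I would deduce the product/separation structure. Writing $\mathfrak{z} = (z,w)$ and using transitivity of $\mathcal{B}({\mathbb H}) \times \mathcal{B}({\mathbb T})$ on each horocycle $H_t$, the value $h(\mathfrak{z})$ is determined by the eigenvalue data $\chi$ together with the single-variable behaviour in the vertical direction $t = \pi^{\mathbb{R}}(\mathfrak{z})$. Separating variables in the differential equation $y^2(\partial_x^2 + \partial_y^2)f + \alpha y\,\partial_y f = 0$ with the imposed $x$-dependence $e^{sx}$ (from the $\mathcal{B}({\mathbb H})$-character), one sees the only way to be consistent simultaneously with the tree-side character is that either the function is constant along the tree fibres (case (a), giving an honest function of $z$ alone, necessarily minimal $\Delta^{{\mathbb H}}_{\alpha,\beta\mathsf{p}}$-harmonic by Proposition \ref{pro:projections} and the lifting remark) or constant along the ${\mathbb H}$-fibres (case (b), a function of $w$ alone, minimal $\Delta^{{\mathbb T}}_{\alpha,\beta}$-harmonic). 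The dichotomy arises because a nonconstant eigen-behaviour in \emph{both} variables is incompatible with the single vertical cocycle that ties ${\mathbb H}$ and ${\mathbb T}$ together in the horocyclic-product definition \eqref{eq:treebolicdef}: the two ``vertical drifts'' would have to match, which pins down the eigenvalue to the trivial one on one of the two factors.

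The main obstacle I expect is making the ``eigenfunction of $\mathcal{B}({\mathbb H}) \times \mathcal{B}({\mathbb T})$ implies product structure'' step rigorous, i.e., controlling the vertical ($t$-) direction. The subgroups $\mathcal{B}({\mathbb H})$ and $\mathcal{B}({\mathbb T})$ fix horocycles, so they give no information about how $h$ varies across horocycles; one must combine the eigenvalue equation with the explicit ODE/PDE satisfied by harmonic functions (Proposition \ref{pro:harmonic} and its tree analogue \eqref{eq:bif-LapT}) to solve the vertical profile and then argue that the coupling condition $\mathfrak{h}(w) = \log_{\mathsf{q}}(\operatorname{\text{\sl \!Im}} z)$ leaves only the two degenerate alternatives. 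A secondary delicate point is ensuring that the quasi-invariance genuinely upgrades to exact invariance up to scalar: this needs minimality plus the uniform Harnack inequality (Proposition \ref{pro:harnack}) to guarantee that $\mathfrak{t}_{\mathfrak{g}} h$ and $h$ are comparable, so that $\inf_{\mathfrak{g}} (\mathfrak{t}_{\mathfrak{g}} h / h)(\mathfrak{o})$-type normalizations stay positive and finite, and then the extremality forces proportionality.
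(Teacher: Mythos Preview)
Your plan has a genuine gap at the second step. The quasi-invariance argument you propose---use Harnack to get $\mathfrak{t}_{\mathfrak{g}}h \le C_{\mathfrak{g}}\,h$ globally, then invoke minimality to upgrade to $\mathfrak{t}_{\mathfrak{g}}h = \chi(\mathfrak{g})\,h$---does not work, because for a fixed $\mathfrak{g}$ in either $\mathcal{B}({\mathbb H})$ or $\mathcal{B}({\mathbb T})$ the displacement $\mathsf{d}_{{\mathsf{HT}}}(\mathfrak{g}\mathfrak{z},\mathfrak{z})$ is \emph{not} bounded as $\mathfrak{z}$ ranges over ${\mathsf{HT}}$. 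For $\mathfrak{g}_b \in \mathcal{B}({\mathbb H})$ one has $\mathsf{d}_{{\mathsf{HT}}}(\mathfrak{g}_b\mathfrak{z},\mathfrak{z}) = \mathsf{d}_{{\mathbb H}}(b+\mathfrak{i}\, y,\mathfrak{i}\, y) \to \infty$ as $y\to 0$; for $\mathfrak{g}_\psi \in \mathcal{B}({\mathbb T})$ with $\psi$ swapping two subtrees above $o_k$, the tree distance $\mathsf{d}_{{\mathbb T}}(w,\psi w)$ is unbounded as $\mathfrak{h}(w)\to+\infty$. So Proposition~\ref{pro:harnack} gives you no global domination, and the minimality step never gets off the ground. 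There is also an internal consistency check that should worry you: if the character identity \emph{did} hold on $\mathcal{B}({\mathbb T})$, then since that group is generated by involutions and $h>0$ forces $\chi>0$, one would get $\chi\equiv 1$ on $\mathcal{B}({\mathbb T})$ and hence \emph{always} case~(a)---but case~(b) genuinely occurs.

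The paper's proof avoids this by not trying to prove invariance under both subgroups simultaneously. It writes $h=\lim_n k(\cdot,\mathfrak{z}_n)$ with $\mathfrak{z}_n=(z_n,w_n)$ converging in $\widehat{{\mathsf{HT}}}$, and splits into two cases according to where the sequence goes. If $w_n\to\varpi$, then for any $\psi\in\mathcal{B}({\mathbb T})$ one has $\psi\mathfrak{z}_n=\mathfrak{z}_n$ for all large $n$, so $k(\mathfrak{g}_\psi\mathfrak{z},\mathfrak{z}_n)=k(\mathfrak{z},\mathfrak{z}_n)$ \emph{exactly}, and passing to the limit gives case~(a) without any Harnack comparison. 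If instead $z_n\to\boldsymbol{\infty}$ with $\inf y_n=y_0>0$, then $\mathsf{d}_{{\mathsf{HT}}}(\mathfrak{g}_b\mathfrak{z}_n,\mathfrak{z}_n)\le \mathsf{d}_{{\mathbb H}}(b+\mathfrak{i}\, y_0,\mathfrak{i}\, y_0)$ \emph{is} bounded, so the Green-kernel Harnack inequality (Proposition~\ref{pro:harnack-green}) applied in the second variable along the sequence yields $k(\mathfrak{g}_b\mathfrak{z},\mathfrak{z}_n)\le C_{d_b}\,k(\mathfrak{z},\mathfrak{z}_n)$; only then does minimality give $\mathfrak{t}_b h=\chi(b)\,h$, and a further limiting argument (letting $y\to\infty$ so that $\mathsf{d}(\mathfrak{g}_b\mathfrak{z},\mathfrak{z})\to 0$) forces $\chi(b)=1$, whence case~(b). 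The dichotomy thus comes from the geometry of the compactification, not from an a~priori separation-of-variables argument; your third step is not needed and, as you yourself flag, is where the real difficulty would lie in your approach.
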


\begin{proof}
We know that $h=k(\cdot,\boldsymbol{\xi})$ for some $\boldsymbol{\xi} \in \mathcal{M}$, the Martin boundary
of ${\mathsf{HT}}$ with respect to $\Delta_{\alpha,\beta}$. There must be a sequence of 
points $\mathfrak{z}_n = (z_n=x_n+\mathfrak{i}\, y_n,w_n) \in {\mathsf{HT}}$ such that $\mathsf{d}_{{\mathsf{HT}}}(\mathfrak{o},\mathfrak{z}_n) \to \infty$ and
$$
h(\mathfrak{z}) = \lim_{n \to\infty} k(\mathfrak{z},\mathfrak{z}_n) \quad \text{for every}\quad
\mathfrak{z} \in {\mathsf{HT}}\,.
$$
Using compactness, we may suppose that $(\mathfrak{z}_n)$ converges to a boundary point in
$\widehat {\mathsf{HT}}$. Then we either have that $w_n \to \varpi$ in $\widehat{\mathbb T}$,
or else $y_n$ is bounded below by a positive constant and $z_n \to \boldsymbol{\infty} \in \partial{\mathbb H}$
in the topology of $\widehat{\mathbb H}$, that is, $y_n \to +\infty$ or $y_n \to y_0 \in \mathbb{R}$ and
$|x_n| \to +\infty$.
\\[6pt]
\emph{Case 1.} $w_n \to \varpi$. Let $\mathfrak{z} = (z,w)$ and $\mathfrak{z}' = (z,w') \in {\mathsf{HT}}$ be
two distinct points with $\pi_{{\mathbb H}}(\mathfrak{z})  =  \pi_{{\mathbb H}}(\mathfrak{z}') = z$, so that
$\mathfrak{h}(w)=\mathfrak{h}(w')$ in ${\mathbb T}$. Consider the confluent $v= w \curlywedge w' \in V({\mathbb T})$.
Since $w_n \to \varpi$, we must have for all but finitely many $n$ that 
$w_n \in {\mathbb T} \setminus {\mathbb T}_v\,$, where ${\mathbb T}_v$ is the subtree ${\mathbb T}_v$ of ${\mathbb T}$ rooted
at $v$ (i.e., consisting of all $v' \in {\mathbb T}$ with $v' \curlywedge v =v$). There is an 
isometry of ${\mathbb T}_v$ that exchanges $w$ and $w'$ (and
fixes $v$). It extends to an element $\psi$ in the horoyclic subgroup
of $\operatorname{\sf Aff}({\mathbb T})$, that fixes every point in ${\mathbb T} \setminus {\mathbb T}_v$. In particular,
$\psi w_n = w_n$ for all but finitely many $n$. Therefore 
$$
G(\mathfrak{z}',\mathfrak{z}_n) = G(\mathfrak{g}_{\,\psi\,}\mathfrak{z},\mathfrak{g}_{\,\psi\,}\mathfrak{z}_n) = G(\mathfrak{z},\mathfrak{z}_n)\,,
\quad \text{whence}\quad k(\mathfrak{z}',\mathfrak{z}_n)=k(\mathfrak{z},\mathfrak{z}_n)
$$
for all but finitely many $n$. Thus, as $n \to \infty$, we find 
$h(\mathfrak{z}')=h(\mathfrak{z})$ for all $\mathfrak{z}, \mathfrak{z}'$ with $\pi_{{\mathbb H}}(\mathfrak{z})=\pi_{{\mathbb H}}(\mathfrak{z}')$.  
This means precisely that there is a function $f^{{\mathbb H}}$ on ${\mathbb H}$
such that $h(z,w) = h^{{\mathbb H}}(z)$ for all $\mathfrak{z} = (z,w) \in {\mathsf{HT}}$.
It is now straightforward that $h^{{\mathbb H}}$ must be minimal 
$\Delta^{{\mathbb H}}_{\alpha,\beta\mathsf{p}}$-harmonic.\\[6pt]
\emph{Case 2.} $z_n \to \boldsymbol{\infty} \in \partial{\mathbb H}$ and $\inf y_n = y_0 > 0$. 
Let $b \in \mathbb{R}$. By \eqref{eq:metric} and standard properties
of the hyperbolic metric,
$$
\begin{aligned}
\mathsf{d}_{{\mathsf{HT}}}(\mathfrak{g}_{\,b\,}\mathfrak{z}_n,\mathfrak{z}_n) 
&= \mathsf{d}_{{\mathbb H}}\bigl((b+x_n) + \mathfrak{i}\, y_n, x_n + \mathfrak{i}\, y_n \bigr)\\
&= \mathsf{d}_{{\mathbb H}}(b + \mathfrak{i}\, y_n, \mathfrak{i}\, y_n )
\le \mathsf{d}_{{\mathbb H}}(b + \mathfrak{i}\, y_0, \mathfrak{i}\, y_0 ) =: d_b\,.
\end{aligned}
$$
Let $C_{d_b}$ be the corresponding Harnack constant in Proposition \ref{pro:harnack-green}. 
Then, using that $G(\cdot,\cdot)$ is $\mathcal{A}$-invariant and that
$\phi_{\alpha,\beta}(\mathfrak{g}_{\,b\,}\mathfrak{z}_n) = \phi_{\alpha,\beta}(\mathfrak{z}_n)$
$$
\begin{aligned}
k(\mathfrak{g}_{\,b\,}\mathfrak{z},\mathfrak{z}_n) 
&= \frac{G(\mathfrak{g}_{\,b\,}\mathfrak{z},\mathfrak{z}_n)}{G(\mathfrak{g}_{\,b\,}\mathfrak{z},\mathfrak{g}_{\,b\,}\mathfrak{z}_n)} 
\frac{G(\mathfrak{g}_{\,b\,}\mathfrak{z},\mathfrak{g}_{\,b\,}\mathfrak{z}_n)}{G(o,\mathfrak{z}_n)}\\
&= \frac{\mathbf{g}(\mathfrak{g}_{\,b\,}\mathfrak{z},\mathfrak{z}_n)}{\mathbf{g}(\mathfrak{g}_{\,b\,}\mathfrak{z},\mathfrak{g}_{\,b\,}\mathfrak{z}_n)}\, k(\mathfrak{z},\mathfrak{z}_n)
\le C_{d_b} \, k(\mathfrak{z},\mathfrak{z}_n).
\end{aligned}
$$
Letting $n \to \infty$, we obtain 
$$
\mathfrak{t}_{b}h(\mathfrak{z}):= h(\mathfrak{g}_{\,b\,}\mathfrak{z}) \le C_{d_b} \, h(\mathfrak{z}) \quad 
\text{for all} \; \mathfrak{z} \in {\mathsf{HT}}\,.
$$
Now, along with $h$, also $\mathfrak{t}_{b} h$ is positive harmonic, and minimality of
$h$ implies that the function $\mathfrak{t}_{b}h/h$ is constant. 
On the other hand, write $\mathfrak{z}=(x+\mathfrak{i}\, y, w)$. If we let 
$y \to +\infty$ (and simultaneously $\mathfrak{h}(w) = \log_{\mathsf{q}} y \to +\infty$)
then $d(\mathfrak{g}_{\,b\,}\mathfrak{z},\mathfrak{z}) = \mathsf{d}_{{\mathbb H}}(b + \mathfrak{i}\, y, \mathfrak{i}\, y) \to 0$. Consequently, 
Proposition \ref{pro:harnack}(a) implies that $h(\mathfrak{g}_{\,b\,}\mathfrak{z})/f(\mathfrak{z}) \to 1$.
Therefore $\mathfrak{t}_{b}f = f$.
This holds for every $b \in \mathbb{R}$. As in Case 1, this is equivalent with
the fact that there is a function $h^{{\mathbb T}}$ on ${\mathbb T}$ such that
$h(z,w) = h^{{\mathbb T}}(w)$ for all $\mathfrak{z} = (z,w) \in {\mathsf{HT}}$.
It is again straightforward that $h^{{\mathbb T}}$ must be minimal 
$\Delta^{{\mathbb T}}_{\alpha,\beta}$-harmonic.
\end{proof}

\begin{rmk}\label{rmk:factordouble} 
If both cases (a) and (b) of the last theorem occur simultaneously, then this 
means that there is a minimal $\Delta_{\alpha,\beta\mathsf{p}}^{\mathbb{R}}$-harmonic function $h^{\mathbb{R}}$ on $\mathbb{R}$
such that $h(z,w) = h^{\mathbb{R}}\bigl(\mathfrak{h}(w)\bigr)$ for all $\mathfrak{z} = (z,w) \in {\mathsf{HT}}$.
In the above proof, this happens when $\mathfrak{z}_n \to (\boldsymbol{\infty},\varpi)$.
\end{rmk}

\begin{proof}[\bf Proof of Theorem \ref{thm:decompose}]
Let $\mathcal{M}_{\min}^{(1)}$ and $\mathcal{M}_{\min}^{(2)}$ be the sets of all minimal 
harmonic functions on ${\mathsf{HT}}$ which are as in Theorem \ref{thm:minimal}(a) and (b),
respectiveley. Since the topology of the Martin boundary is the one of uniform 
convergence on compact sets, both are easily seen to be 
Borel sets in $\mathcal{M}$. Also, 
$\mathcal{M}_{\min} \setminus \mathcal{M}_{\min}^{(1)} \subset \mathcal{M}_{\min}^{(2)}$. 
If $f$ is any positive harmonic function on ${\mathsf{HT}}$ with integral representation
\eqref{eq:PoisMart}, then we can set
$$
f^{{\mathbb H}} = \int_{\mathcal{M}_{\min}^{(1)}} k(\cdot,\boldsymbol{\xi})\,d\nu^h(\boldsymbol{\xi})\,.
\quad\text{and}\quad f^{{\mathbb T}} = \int_{\mathcal{M} \setminus \mathcal{M}_{\min}^{(1)}} k(\cdot,\boldsymbol{\xi})\,d\nu^h(\boldsymbol{\xi})
\,.
$$
Then $f^{{\mathbb H}}(z,w)$ depends only on $z$, while $f^{{\mathbb T}}(z,w)$ depends only on $w$,
and $f$ is their sum, as proposed.
\end{proof}

\begin{rmk}\label{rmk:minimal} In view of Theorem \ref{thm:restrict-HT} and Remark
\ref{rmk:Hb}, as well as Lemma \ref{lem:restrict-T} (which also applies to the metric
tree $\mathbb{R}$ with vertex set ${\mathbb Z}$), we have the following:
a function on any of our spaces ${\mathsf{HT}}$, ${\mathbb H}$, ${\mathbb T}$, or $\mathbb{R}$ is minimal harmonic for
the respective one of our Laplacians if and only its restriction to $\operatorname{\sf LT}$, $\operatorname{\sf L}\!{\mathbb H}$, $V({\mathbb T})$, or
${\mathbb Z}$ (respectively) is minimal harmonic for the respective induced random walk.  
\end{rmk}

As a first application of Theorem \ref{thm:decompose}, we can 
clarify when the weak Liouville property holds, that is, when all bounded harmonic functions
are constant. For the following, we recall from \cite[Thm. 5.1]{BSSW2} that
$$
\lim_{t \to \infty} \frac{1}{t} \,\mathsf{d}_{{\mathsf{HT}}}(X_t,X_0) = 
|\ell(\alpha,\beta)| \;\; \text{almost surely, where}\;\; 
\ell(\alpha,\beta) = \frac{\log \mathsf{q}}{\mathsf{E}(\tau)} \,\frac{\mathsf{a}-1}{\mathsf{a}+1},
$$
with $\tau$ and $\mathsf{a}$ given by \eqref{eq:tau} and \eqref{eq:rha}, respectively. 
In particular, $\ell(\alpha,\beta)=0$ if and only if $\beta \mathsf{p} \,\mathsf{q}^{\alpha-1} = 1$.

\begin{thm}\label{thm:Liouville-HT} 
Suppose that $\mathsf{p} \ge 2$. Then the Laplacian $\Delta_{\alpha,\beta}$
on ${\mathsf{HT}}$ has the weak Liouville property if and only if $\ell(\alpha,\beta)=0$.
\end{thm}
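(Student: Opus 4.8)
The plan is to separate the weak Liouville property of $\Delta_{\alpha,\beta}$ on ${\mathsf{HT}}$ into the corresponding properties of the two projected Laplacians $\Delta^{{\mathbb H}}_{\alpha,\beta\mathsf{p}}$ on ${\mathbb H}(\mathsf{q})$ and $\Delta^{{\mathbb T}}_{\alpha,\beta}$ on ${\mathbb T}_{\mathsf{p}}$, and then to decide each of these by a transience--recurrence alternative governed by the single parameter $\mathsf{a}=\beta\mathsf{p}\,\mathsf{q}^{\alpha-1}$; recall that $\ell(\alpha,\beta)=0$ is exactly $\mathsf{a}=1$, and that in each of the three processes on ${\mathsf{HT}}$, ${\mathbb H}$, ${\mathbb T}$ the vertical increment $Y$ satisfies $\mathsf{Pr}[Y=1]/\mathsf{Pr}[Y=-1]=\mathsf{a}$. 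For the reduction I would first note that constants are $\Delta_{\alpha,\beta}$-harmonic (the exit measures are probabilities), so for a bounded harmonic $h$ the function $h+\|h\|_\infty$ is non-negative harmonic, hence by Proposition \ref{pro:harnack} either constant or everywhere positive; in the latter case Theorem \ref{thm:decompose} writes it as $h^{{\mathbb H}}(z)+h^{{\mathbb T}}(w)$ with $h^{{\mathbb H}},h^{{\mathbb T}}\ge 0$, and since the sum is bounded each summand is bounded. Conversely, by Proposition \ref{pro:projections} the lift of a bounded harmonic function from either factor is bounded harmonic on ${\mathsf{HT}}$. Hence $\Delta_{\alpha,\beta}$ is weak Liouville on ${\mathsf{HT}}$ if and only if both $\Delta^{{\mathbb H}}_{\alpha,\beta\mathsf{p}}$ and $\Delta^{{\mathbb T}}_{\alpha,\beta}$ are.

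For the tree factor I would invoke Lemma \ref{lem:restrict-T}: restriction to $V({\mathbb T})$ together with the convex-combination extension \eqref{eq:extend-T} is a bijection preserving boundedness and non-constancy between $\Delta^{{\mathbb T}}_{\alpha,\beta}$-harmonic functions and harmonic functions of the nearest-neighbour walk \eqref{eq:RWtree} on $V({\mathbb T}_{\mathsf{p}})$, for which $\mathfrak{h}(W_n)$ is an honest random walk on ${\mathbb Z}$ with up-probability $\mathsf{a}/(1+\mathsf{a})$. If $\mathsf{a}>1$ the drift is positive, $W_n\to W_\infty\in\partial^*{\mathbb T}$ a.s., and --- this is where $\mathsf{p}\ge 2$ enters --- for a forward neighbour $u$ of $o$ the function $v\mapsto\mathsf{Pr}_v[W_\infty\in\partial{\mathbb T}_u]$ is bounded, harmonic and non-constant: it is $\ge 1-1/\mathsf{a}>0$ at $u$ (from $u$ the walk avoids $u^-$ forever with probability $1-1/\mathsf{a}$, hence stays in ${\mathbb T}_u$) and it is $<1$ because a second forward neighbour of $o$ carries positive mass as well, so $\Delta^{{\mathbb T}}_{\alpha,\beta}$ is not weak Liouville. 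If $\mathsf{a}\le 1$ the walk is transient (the resistances along the ray $o=o_0,o_1,o_2,\dots$ toward $\varpi$ decay geometrically, with ratio $\mathsf{a}/\mathsf{p}<1$, hence are summable), so it converges to a random end; that end cannot lie in $\partial^*{\mathbb T}$, because along a geodesic to a top end the height tends to $+\infty$ whereas the ${\mathbb Z}$-walk $\mathfrak{h}(W_n)$ has $\liminf=-\infty$ a.s.\ when its drift is $\le 0$; thus $W_n\to\varpi$ a.s., the exit boundary is trivial, and $\Delta^{{\mathbb T}}_{\alpha,\beta}$ is weak Liouville.

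For the hyperbolic factor, put $Z_t=\pi^{{\mathbb H}}(X_t)$ and $Y_t=\pi^{\mathbb{R}}(X_t)=\log_{\mathsf{q}}\operatorname{\text{\sl \!Im}} Z_t$, a real diffusion with generator $\Delta^{\mathbb{R}}_{\alpha,\beta\mathsf{p}}$ and the same ratio $\mathsf{a}$. If $\mathsf{a}<1$ then $\operatorname{\text{\sl \!Im}} Z_t\to 0$ a.s., and $\operatorname{\text{\sl \!Re}} Z_t$ converges a.s.\ to some $Z_\infty\in\mathbb{R}=\partial{\mathbb H}\setminus\{\boldsymbol\infty\}$ --- this is the hyperbolic part of the boundary behaviour analysed in \cite{BSSW2}, or it follows directly by summing the horizontal increments over the excursions $[\tau(n),\tau(n+1)]$, each of order $\mathsf{q}^{Y_{\tau(n)}}$ up to a factor with exponential tail, via Borel--Cantelli. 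The law of $Z_\infty$ is not a point mass: harmonicity together with full support of the exit measures would otherwise force the same point mass at every starting point, contradicting that the isometry $z\mapsto z+b$ preserves $\Delta^{{\mathbb H}}_{\alpha,\beta\mathsf{p}}$ and moves $Z_\infty$ by $b$; hence $z\mapsto\mathsf{Pr}_z[Z_\infty\le 0]$ is a bounded non-constant harmonic function and $\Delta^{{\mathbb H}}_{\alpha,\beta\mathsf{p}}$ is not weak Liouville. If $\mathsf{a}=1$, then by Remark \ref{rmk:Hb} bounded $\Delta^{{\mathbb H}}_{\alpha,\beta\mathsf{p}}$-harmonic functions on ${\mathbb H}$ correspond to bounded $\widetilde\mu$-harmonic functions on the group $\operatorname{\sf Aff}({\mathbb H},\mathsf{q})\cong{\mathbb Z}\ltimes_{\mathsf{q}}\mathbb{R}$, where $\widetilde\mu$ is the exit distribution of $(Z_t)$ from $\mathsf{S}_0\cup\mathsf{S}_1$ started at $\mathfrak{i}\,$; since $\mathsf{a}=1$ the dilation (${\mathbb Z}$-)projection of $\widetilde\mu$ is the symmetric simple random walk, hence recurrent, and a random walk on the affine group whose dilation projection is recurrent has trivial Poisson boundary --- so $\Delta^{{\mathbb H}}_{\alpha,\beta\mathsf{p}}$ is weak Liouville.

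Putting the factors together: if $\mathsf{a}>1$ the tree factor, and if $\mathsf{a}<1$ the hyperbolic factor, is not weak Liouville, so in either case ${\mathsf{HT}}$ is not; and if $\mathsf{a}=1$ both factors are weak Liouville, so ${\mathsf{HT}}$ is. Since $\mathsf{a}=1$ is exactly $\ell(\alpha,\beta)=0$, this is the asserted equivalence. I expect the genuinely hard step to be the case $\mathsf{a}=1$ of the hyperbolic factor: there $(Z_t)$ neither converges to $\partial{\mathbb H}$ nor is recurrent on ${\mathbb H}$, so triviality of its bounded harmonic functions has to be extracted from recurrence of the dilation component alone --- a Choquet--Deny-type statement for random walks on the affine ``$ax+b$''-group, parallel to the Diestel--Leader and lamplighter cases (cf.\ \cite{BrWo1,BrWo2}), which I regard as the technical heart of the proof.
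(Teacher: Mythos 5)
Your decomposition-based reduction to the two projected Laplacians is exactly the paper's strategy (Theorem \ref{thm:decompose} plus Proposition \ref{pro:projections}, followed by a factor-wise dichotomy), and two of your factor-wise analyses are in fact nice self-contained alternatives to what is done in the paper: your tree argument via resistance/exit-boundary considerations replaces the paper's citation of \cite{CKW}, and your $\mathsf{a}<1$ analysis of the hyperbolic factor (convergence of $Z_t$ to a non-atomic harmonic measure on $\mathbb{R}$) makes explicit a direction that the paper's proof of Proposition~\ref{cor:liou-T-H}(b) leaves implicit.

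The genuine gap is precisely where you flag it: the hyperbolic factor with $\mathsf{a}=1$. You invoke, without proof or a precise citation, the assertion that a random walk on $\operatorname{\sf Aff}({\mathbb H},\mathsf{q})$ whose dilation projection is recurrent has trivial Poisson boundary. This is not a black box you can use freely here, and the paper itself warns against it: in the Outlook paragraph the authors point out that Elie's results for the connected group $\mathbb{R}\ltimes\mathbb{R}^*_+$ do not transfer directly to the disconnected subgroup $\operatorname{\sf Aff}({\mathbb H},\mathsf{q})\cong{\mathbb Z}\ltimes_{\mathsf{q}}\mathbb{R}$. The paper instead gives a self-contained proof of Proposition~\ref{cor:liou-T-H}(b) that sidesteps any Choquet--Deny theorem: it uses the a.s.\ convergence of the Martin kernel $k^{{\mathbb H}}(\cdot,Z_t)$ to a minimal harmonic function, produces (from recurrence of the $\mathbb{Z}$-projection when $\ell=0$, or from positive drift when $\ell>0$) a random subsequence with $\operatorname{\text{\sl \!Im}} Z_{t(n)}\to\infty$, and then runs the Case-2 argument of Theorem~\ref{thm:minimal} (horizontal translation plus minimality plus the Harnack estimate of Proposition~\ref{pro:harnack-green}) to show the limit function is translation-invariant, hence a lift of a minimal $\Delta^{\mathbb{R}}_{\alpha,\beta\mathsf{p}}$-harmonic function; recurrence of the $\mathbb{Z}$-walk then forces it to be the constant $\mathbf{1}$. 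To complete your proof you would need to either prove the affine-group Choquet--Deny statement for $\widetilde\mu$ carefully (bearing in mind the disconnectedness issue), or substitute the paper's Martin-kernel argument at that one step.
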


This follows from Theorem \ref{thm:decompose} in combination with the following. 

\begin{pro}\label{cor:liou-T-H}$\,$\\ \emph{(a)} $\Delta^{{\mathbb T}}_{\alpha,\beta}$ has the weak 
Liouville property on ${\mathbb T}$ if and only if $\ell(\alpha,\beta) \le 0$. 
\\[4pt]
\emph{(b)}
$\Delta^{{\mathbb H}}_{\alpha,\beta\mathsf{p}}$ has the weak Liouville property on ${\mathbb H}$
if and only if $\ell(\alpha,\beta) \ge 0$. 
\end{pro}

(Note the striking difference in the range of validity of the weak Liouville
property on ${\mathsf{HT}}(\mathsf{q},\mathsf{p})$ with $\mathsf{p} \ge 2$ as compared to ${\mathbb H}(\mathsf{q}) = {\mathsf{HT}}(\mathsf{q},1)\,$! 
Also, compare those results with {\sc Karlsson and Ledrappier}~\cite{KaLe1}, \cite{KaLe2}.)

\begin{proof} We note that the weak Liouville property is the same
as triviality of the Poisson boundary, or equivalently, that the constant
function $\mathbf{1}$ is minimal harmonic. 
\\[5pt]
(a) It follows from \cite{CKW} that the weak Liouville property holds 
for the random walk $(W_{\tau(n)})$ on ${\mathbb T}$ if and only if $\ell(\alpha,\beta) \le 0$.
Via Lemma \ref{lem:restrict-T}, this transfers to $\Delta^{{\mathbb T}}_{\alpha,\beta}\,$.
Statement (a) also follows from Proposition \ref{pro:min-T} below. 
\\[5pt] 
(b) Among the different possible approaches, we adapt the method of Theorem \ref{thm:minimal}.
Since ${\mathbb H}(\mathsf{q}) = {\mathsf{HT}}(\mathsf{q},1)$, everything that we have stated and proved for
harmonic functions and the Green kernel on ${\mathsf{HT}}$ also applies to ${\mathbb H}$. (The 
decomposition result for positive harmonic functions becomes of course trivial.) 
In particular, we have the Green kernel 
$G^{{\mathbb H}}(w,z) = \phi(\alpha,\beta\mathsf{p})(z) \, \mathfrak{g}^{{\mathbb H}}(w,z)$, where
$\phi(\alpha,\beta\mathsf{p})(x + \mathfrak{i}\, y) = (\beta\mathsf{p})^{\lfloor \log_\mathsf{q} y \rfloor}\,y^{\alpha}$.
We also have the associated Martin kernel $k^{{\mathbb H}}(w,z)$. 

A basic theorem in Martin boundary theory says that $k^{{\mathbb H}}(\cdot, Z_t)$ converges almost
surely to a minimal harmonic function. We now show that when $\ell(\alpha,\beta) \ge 0$ then 
with positive probability, there is a random sequence $(Z_{t(n)})$ such that
$k^{{\mathbb H}}(\cdot, Z_{t(n)}) \to \mathbf{1}$. Thus, the constant function $\mathbf{1}$ on ${\mathbb H}$ must be minimal 
harmonic.

To show what we claimed, in the case when $\ell(\alpha,\beta) > 0$ then the random walk
$(Y_{\tau(n)})$ has positive drift, so that it tends to $\infty$ almost surely.
When $\ell(\alpha,\beta) = 0$, it is recurrent: with probability $1$, it visits every
point in ${\mathbb Z}$. Thus it is unbounded and has a random subsequence that tends to 
$\infty$. In both cases, we have a random sequence $t(n)$ such that 
$\,\operatorname{\text{\sl \!Im}} Z_{t(n)} \to \infty$ almost surely. 

Let us abbreviate $z_n = Z_{t(n)}\,$, so that $k^{{\mathbb H}}(\cdot, z_n)$ tends to a minimal
harmonic function $h$, and $\operatorname{\text{\sl \!Im}} z_n \to \infty$. We can use exactly the same method as
in Case 2 of the proof of Theorem \ref{thm:minimal} and get that $h(z+b) = h(z)$ for
every $z \in {\mathbb H}$ and every $b \in \mathbb{R}$. Thus, $h$ only depends on $\operatorname{\text{\sl \!Re}} z$, which means
that it arises from lifting a minimal $\Delta^{\mathbb{R}}_{\alpha,\beta\mathsf{p}}$-harmonic function $\widetilde h$ 
to ${\mathbb H}$ via $h(z) = \widetilde h (\log_{\mathsf{q}} \operatorname{\text{\sl \!Im}} z)$. We can apply Lemma \ref{lem:restrict-T}
to the specific case where the tree is just ${\mathbb R}$, with vertex set ${\mathbb Z}\,$: we see that the 
minimal $\Delta^{\mathbb{R}}_{\alpha,\beta\mathsf{p}}$ harmonic functions arise by interpolating the minimal
harmonic functions for the random walk $Y_{\tau(n)}$ on ${\mathbb Z}$. The latter minimal harmonic
functions are easily computed and are as follows.  

\smallskip

When $\ell(\alpha,\beta) = 0$ then that random walk is recurrent, so that all its positive harmonic
functions are constant. Thus, $h = \mathbf{1}$, as required.

\smallskip

When $\ell(\alpha,\beta) > 0$, the random walk has precisely two minimal harmonic 
functions, namely the constant function $\mathbf{1}$ and the function $\widehat f(k) = \mathsf{a}^{-k}$
(a well known exercise). We have to interpolate $\widehat f$ according to the variant of Lemma \ref{lem:restrict-T}
where $\mathsf{p} = 1$ and ${\mathbb T} = \mathbb{R}$ with vertex set ${\mathbb Z}$. This yields the unique 
$\Delta_{\alpha,\mathsf{p}\beta}^{\mathbb{R}}$-harmonic extension $\widehat h$. Now we have $h(z) = \widehat h (\log_{\mathsf{q}} \operatorname{\text{\sl \!Im}} z)$.

Thus, with probability one, $k^{{\mathbb H}}(\cdot, Z_t)$ converges either to the constant $\mathbf{1}$ or
to the latter function $h$. Suppose that it converges to $h$ with probability $1$. But then,
the Poisson boundary has to consist of only one point, and in this case, every bounded
harmonic function ought to be a multiple of $h$ - a contradiction since $h$ is unbounded.
Thus, with positive probability, the limit is the function $\mathbf{1}$ which thus must
be minimal harmonic on ${\mathbb H}$. (A posterori, the limit function must be $\mathbf{1}$ almost surely.)
\end{proof}

Theorem \ref{thm:decompose} tells us that for describing all positive
harmonic functions on ${\mathsf{HT}}$, we need to know all such functions
on ${\mathbb T}$ and on ${\mathbb H}$. Here, ``describing'' means that we
need to determine all minimal harmonic functions on each of those two spaces.
This task is very easy on ${\mathbb T}$. In view of Remark \ref{rmk:minimal}, we first
consider the induced random walk on $V({\mathbb T})$. The minimal harmonic functions 
for any transient nearest neighbour random walk  on 
(the vertex set of) a tree are completely
understood since the seminal article of {\sc Cartier}~\cite{Car}. For  $(W_{\tau(n)})$
on $V({\mathbb T})$, we have the following explicit formulas, taken from 
\cite[p. 424--425]{Wlamp}.

\begin{pro}\label{pro:min-T}
The minimal Martin boundary of the random walk on $V({\mathbb T})$ coincides with the whole Martin 
boundary, which is
$\partial{\mathbb T}$. For the transition probabilities of \eqref{eq:RWtree}, and with $\mathsf{a}$
as in \eqref{eq:rha}, the Martin kernels on $V({\mathbb T})$ are given as follows, where we  set
$$
\mathsf{b} =  \max\{\mathsf{a},1\} \quad\text{and}\quad \mathsf{c} = \min \{ \mathsf{a},1/\mathsf{a}\}/\mathsf{p}\,.
$$
For the boundary point $\varpi$,
$$
k^{{\mathbb T}}(v,\varpi) = \mathsf{b}^{-\mathfrak{h}(v)}\,, \quad v \in V({\mathbb T})\,.
$$
For $\xi \in \partial^*{\mathbb T}$, 
$$
k^{{\mathbb T}}(v,\xi) = \mathsf{b}^{-\mathfrak{h}(v)}\, \, \mathsf{c}^{\mathfrak{h}(o \curlywedge \xi) - \mathfrak{h}(v \curlywedge\xi)}\,, \quad v \in V({\mathbb T})\,.
$$
The minimal harmonic functions on the metric tree ${\mathbb T}$ are the respective extensions of
these functions according to \eqref{eq:extend-T}, which are also denoted
$k^{{\mathbb T}}(w,\xi)$, where $w \in {\mathbb T}$ and $\xi \in \partial{\mathbb T}$.
\end{pro}

We see indeed (as already mentioned) that the constant function $\mathbf{1}$ on ${\mathbb T}$ is minimal 
harmonic if and only if $\mathsf{a} \le 1$, that is $\ell(\alpha,\beta) \le 0$. 

\smallskip

Describing the minimal harmonic functions on ${\mathbb H}$ is in general more complicated and 
less explicit. There is one exception which we describe next. Namely, if $\beta\mathsf{p} = 1$
for the Laplacian on ${\mathsf{HT}}$, then its projection 
$\Delta_{\alpha,\beta\mathsf{p}}^{{\mathbb H}} = \Delta_{\alpha}^{{\mathbb H}}$
is nothing but the ordinary hyperbolic Laplacian on upper half plane ${\mathbb H}$ with vertical
drift parameter $\alpha$, as in the second line of Definition \ref{def:D2}. The bifurcation 
lines disappear in its analysis, and the minimal harmonic functions on ${\mathbb H}$ are well 
known; see e.g. {\sc Helgason} \cite{He}, or many other sources.

\begin{pro}\label{pro:min-H-noslice}
The minimal Martin boundary of $\Delta_{\alpha}^{{\mathbb H}} =\Delta_{\alpha,1}^{{\mathbb H}}$ on ${\mathbb H}$ 
coincides with the whole Martin boundary, which is $\partial{\mathbb H}$. 
The associated Martin kernels on ${\mathbb H}$ are given by the following
extended Poisson kernels.
For the boundary point $\boldsymbol{\infty}$,
$$
P_{\alpha}\bigl((x+\mathfrak{i}\, y), \boldsymbol{\infty}\bigr) = y^{\max\{ 1-\alpha,0\}}\,.
$$
For any boundary point $\zeta \in \mathbb{R}$, 
$$
P_{\alpha}\bigl((x+\mathfrak{i}\, y), \zeta\bigr) = y^{\max\{ 1-\alpha,0\}}
\left(\frac{\zeta^2 +1}{(\zeta-x)^2 + y^2}\right)^{\max\{\frac{\alpha}{2}, 1 - \frac{\alpha}{2}\}}.
$$
\end{pro}

At last, we can deduce the following result.

\begin{thm}\label{thm:allminimal} Consider ${\mathsf{HT}}(\mathsf{q},\mathsf{p})$ with $\mathsf{p} \ge 2$
and the Laplacian $\Delta_{\alpha,\beta}$ with $\beta=1/\mathsf{p}$, so that $\mathsf{a} = \mathsf{q}^{\alpha-1}$.
\\[4pt]
\emph{(I)} If $\alpha \ne 1$ then the minimal harmonic functions
on ${\mathsf{HT}}$ are parametrised by $\mathbb{R} \cup \partial^*{\mathbb T}$ and are given by
$$
\mathfrak{z} = (z,w) \mapsto k^{{\mathbb T}}(w,\xi)\,, \; \xi \in \partial^*{\mathbb T}\,,\quad\text{and}\quad
\mathfrak{z} = (z,w) \mapsto P_{\alpha}(z,\zeta)\,, \; \zeta \in \mathbb{R}\,.
$$
\emph{(II)} If $\alpha = 1$ then the minimal harmonic functions are as in \emph{(I)} plus,
in addition, the constant function $\mathbf{1}$.
\end{thm}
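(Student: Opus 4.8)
The plan is to combine the structural decomposition of Theorem~\ref{thm:decompose} with the explicit lists of minimal harmonic functions on the two factor spaces. By Theorem~\ref{thm:decompose}, every positive $\Delta_{\alpha,\beta}$-harmonic function on ${\mathsf{HT}}$ is a sum $h^{{\mathbb H}}(z)+h^{{\mathbb T}}(w)$ with $h^{{\mathbb H}}$ non-negative $\Delta^{{\mathbb H}}_{\alpha,\beta\mathsf{p}}$-harmonic and $h^{{\mathbb T}}$ non-negative $\Delta^{{\mathbb T}}_{\alpha,\beta}$-harmonic; with $\beta=1/\mathsf{p}$ we have $\beta\mathsf{p}=1$, so $\Delta^{{\mathbb H}}_{\alpha,\beta\mathsf{p}}=\Delta^{{\mathbb H}}_{\alpha}$ is the genuine hyperbolic Laplacian with drift on ${\mathbb H}$, for which Proposition~\ref{pro:min-H-noslice} identifies the minimal harmonic functions as the extended Poisson kernels $P_{\alpha}(\cdot,\zeta)$, $\zeta\in\partial{\mathbb H}=\mathbb{R}\cup\{\boldsymbol{\infty}\}$. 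Meanwhile, since $\mathsf{a}=\mathsf{q}^{\alpha-1}$, Proposition~\ref{pro:min-T} gives the minimal $\Delta^{{\mathbb T}}_{\alpha,\beta}$-harmonic functions as $k^{{\mathbb T}}(\cdot,\varpi)$ and $k^{{\mathbb T}}(\cdot,\xi)$, $\xi\in\partial^*{\mathbb T}$. The two ingredient lists thus provide all the candidate extremal rays, and what remains is to decide which of them are genuinely extremal as functions on~${\mathsf{HT}}$, and to handle the overlaps.

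First I would observe that a function of the pure form $\mathfrak{z}\mapsto h^{{\mathbb T}}(w)$ (resp.\ $\mathfrak{z}\mapsto h^{{\mathbb H}}(z)$) is minimal on ${\mathsf{HT}}$ if and only if its generating function $h^{{\mathbb T}}$ (resp.\ $h^{{\mathbb H}}$) is minimal on the factor: this is exactly the content of Theorem~\ref{thm:minimal} in one direction, and in the other direction, if $h^{{\mathbb T}}$ is minimal on ${\mathbb T}$ and $h^{{\mathbb T}}\circ\pi^{{\mathbb T}}\ge h_1\ge 0$ harmonic on ${\mathsf{HT}}$, then decomposing $h_1=h_1^{{\mathbb H}}+h_1^{{\mathbb T}}$ via Theorem~\ref{thm:decompose} and projecting (letting $y\to\infty$ as in Case~2 of the proof of Theorem~\ref{thm:minimal} kills the ${\mathbb H}$-part in the limit, or one argues directly that $h_1^{{\mathbb H}}$ bounded by $0$) forces $h_1^{{\mathbb H}}\equiv 0$ and $h_1^{{\mathbb T}}\le h^{{\mathbb T}}$, whence $h_1$ is proportional to $h^{{\mathbb T}}\circ\pi^{{\mathbb T}}$. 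Thus the minimal harmonic functions on ${\mathsf{HT}}$ are precisely the lifts of minimal harmonic functions on ${\mathbb H}$ together with the lifts of minimal harmonic functions on ${\mathbb T}$, which from the two propositions are the $P_{\alpha}(\cdot,\zeta)$ ($\zeta\in\mathbb{R}\cup\{\boldsymbol{\infty}\}$) and the $k^{{\mathbb T}}(\cdot,\xi)$ ($\xi\in\partial^*{\mathbb T}$), plus $k^{{\mathbb T}}(\cdot,\varpi)$.

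Next comes the bookkeeping that produces the stated parametrisations. By Remark~\ref{rmk:factordouble}, the functions that are simultaneously of type (a) and type (b) are exactly the lifts of minimal $\Delta^{\mathbb{R}}_{\alpha,\beta\mathsf{p}}$-harmonic functions on the level line $\mathbb{R}$; these sit inside both lists and must be identified so as not to double-count. Concretely, $P_{\alpha}(\cdot,\boldsymbol{\infty})=y^{\max\{1-\alpha,0\}}$ depends only on $y=\operatorname{\text{\sl \!Im}} z$, i.e.\ only on $\mathfrak{h}(w)$, and equals the lift of $k^{{\mathbb T}}(\cdot,\varpi)=\mathsf{b}^{-\mathfrak{h}(\cdot)}$ precisely when $\mathsf{b}=\mathsf{q}^{\max\{1-\alpha,0\}}$; since $\mathsf{b}=\max\{\mathsf{a},1\}=\max\{\mathsf{q}^{\alpha-1},1\}=\mathsf{q}^{\max\{\alpha-1,0\}}$, these coincide iff $\max\{1-\alpha,0\}=\max\{\alpha-1,0\}$, i.e.\ iff $\alpha=1$, in which case both equal the constant $\mathbf{1}$. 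So in case $\alpha\ne 1$ the boundary point $\varpi$ contributes the same extremal ray $P_\alpha(\cdot,\boldsymbol{\infty})$ already present among the ${\mathbb H}$-kernels at $\zeta=\boldsymbol{\infty}$; removing this duplicate leaves the disjoint parametrising set $\mathbb{R}\cup\partial^*{\mathbb T}$ as in~(I). In case $\alpha=1$ we have $\mathsf{a}=1$, so $\varpi$ and $\boldsymbol{\infty}$ both yield the constant function $\mathbf{1}$, which is genuinely minimal (it lies in both factor-lists), and the remaining minimal functions are $P_1(\cdot,\zeta)$, $\zeta\in\mathbb{R}$, and $k^{{\mathbb T}}(\cdot,\xi)$, $\xi\in\partial^*{\mathbb T}$, giving exactly~(II).

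The main obstacle I anticipate is not any of these identifications but the rigorous verification that the two factor-lists are exhaustive \emph{and that no extremal ray of type (a) accidentally coincides with one of type (b) beyond the $\varpi$--$\boldsymbol{\infty}$ pair just discussed}: one must check that for $\zeta\in\mathbb{R}$ the kernel $P_\alpha(\cdot,\zeta)$ genuinely depends on $\operatorname{\text{\sl \!Re}} z$ (so it is not of type~(b)) and that for $\xi\in\partial^*{\mathbb T}$ the function $k^{{\mathbb T}}(\cdot,\xi)$ genuinely depends on $w$ through more than $\mathfrak{h}(w)$ (so it is not of type~(a)), both of which are immediate from the explicit formulas in Propositions~\ref{pro:min-H-noslice} and~\ref{pro:min-T}. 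Once that is in place, together with the elementary fact that distinct boundary points of ${\mathbb H}$, resp.\ of ${\mathbb T}$, give non-proportional minimal functions (a standard consequence of Martin boundary theory, already recorded in those propositions), the enumeration in (I) and (II) follows by assembling the pieces. The whole argument is therefore a short deduction once Theorems~\ref{thm:decompose} and~\ref{thm:minimal} and the two explicit Propositions are granted.
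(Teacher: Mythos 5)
Your central structural claim is false, and it is false precisely at the two boundary points whose treatment determines the answer. You assert that a pure-form function $h^{{\mathbb T}}\circ\pi^{{\mathbb T}}$ (resp.\ $h^{{\mathbb H}}\circ\pi^{{\mathbb H}}$) is minimal on ${\mathsf{HT}}$ \emph{if and only if} $h^{{\mathbb T}}$ (resp.\ $h^{{\mathbb H}}$) is minimal on the factor. The ``only if'' direction is fine (it is what Theorem~\ref{thm:minimal} provides), and the ``if'' direction does hold for $\xi\in\partial^*{\mathbb T}$ and for $\zeta\in\mathbb{R}$ finite -- there your sketch of the argument (decompose $h_1=h_1^{{\mathbb H}}+h_1^{{\mathbb T}}$ and force one part to vanish) is essentially the paper's proof. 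But it breaks down exactly when the minimal factor-function also factors through $\pi^{\mathbb{R}}$, i.e.\ for $\varpi$ and for $\boldsymbol{\infty}$. Take $\alpha>1$: then $k^{{\mathbb T}}(\cdot,\varpi)\circ\pi^{{\mathbb T}}$ is $(x+\mathfrak{i}\,y,w)\mapsto y^{1-\alpha}$, which is a $\Delta^{{\mathbb H}}_\alpha$-harmonic function that is \emph{not} minimal on ${\mathbb H}$, so it dominates a non-proportional harmonic lift $u\circ\pi^{{\mathbb H}}$ with $u\not\equiv 0$; thus the lift of a minimal ${\mathbb T}$-function fails to be minimal on ${\mathsf{HT}}$, and there is no way to conclude $h_1^{{\mathbb H}}\equiv 0$ as you claim. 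The reason your vanishing argument needs $h^{{\mathbb T}}$ to depend on $w$ through more than $\mathfrak{h}(w)$ is that you want $\inf\{h^{{\mathbb T}}(w):\mathfrak{h}(w)=k\}=0$, which is true for $\xi\in\partial^*{\mathbb T}$ but false for $\varpi$; symmetrically for $P_\alpha(\cdot,\boldsymbol{\infty})$, which is constant in $x$ so letting $\operatorname{\text{\sl \!Re}} z\to\infty$ does not help.

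Your subsequent bookkeeping is also internally inconsistent. You correctly compute that $k^{{\mathbb T}}(\cdot,\varpi)\circ\pi^{{\mathbb T}}$ and $P_\alpha(\cdot,\boldsymbol{\infty})\circ\pi^{{\mathbb H}}$ coincide if and only if $\alpha=1$, but then in the case $\alpha\neq 1$ you declare that ``$\varpi$ contributes the same extremal ray $P_\alpha(\cdot,\boldsymbol{\infty})$'' and remove a duplicate. There is no duplicate when $\alpha\neq 1$: the two lifts are the distinct functions $\mathbf{1}$ and $y^{1-\alpha}$, and under your false ``if'' direction both would count as minimal, leaving you with a parametrising set that has two points too many. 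What is actually true -- and what the paper proves -- is that when $\alpha\neq 1$ \emph{neither} of these two functions is minimal on ${\mathsf{HT}}$: the constant $\mathbf{1}$ fails because the weak Liouville property fails (Theorem~\ref{thm:Liouville-HT}), and $y^{1-\alpha}$ fails because it is non-minimal on ${\mathbb H}$ when $\alpha>1$ and non-minimal on ${\mathbb T}$ when $\alpha<1$. So the correct conclusion for (I) is obtained by showing directly that the lifts indexed by $\mathbb{R}$ and $\partial^*{\mathbb T}$ are minimal (your argument does work there), and separately that the two lifts at $\boldsymbol{\infty}$ and $\varpi$ are not, rather than by appealing to an iff that does not hold.
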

\begin{proof}
We first show that for $\zeta \in \mathbb{R}$, the function 
$\mathfrak{z} = (z,w) \mapsto P_{\alpha}(z,\zeta)$ is minimal harmonic. Suppose that 
$P_{\alpha}(z,\zeta) \ge h(z,w)$ for all $(z,w) = \mathfrak{z} \in {\mathsf{HT}}$, where $h$ is non-negative
harmonic on ${\mathsf{HT}}$. 
We decompose $h(z,w) = h^{{\mathbb H}}(z) + h^{{\mathbb T}}(w)$ according to Theorem \ref{thm:decompose}.
By minimality of $P_{\alpha}(\cdot,\zeta)$ on ${\mathbb H}$, we have $h^{{\mathbb H}} = c\cdot P_{\alpha}(\cdot,\zeta)$
for some $c \in [0\,,\,1]$. If $c=1$ then we are done. If $c < 1$, 
$$
P_{\alpha}(z,\zeta) \ge \frac{1}{1-c} h^{{\mathbb T}}(w) = \int_{\partial {\mathbb T}} k^{{\mathbb T}}(w,\xi)\,d\nu(\xi)
\quad\text{for all }\; (z,w) \in {\mathsf{HT}}\,,
$$
where $\nu$ is a Borel measure on $\partial {\mathbb T}$. We set $w = o$ and $z = x + \mathfrak{i}\,$, where
$x \in \mathbb{R}$. Then $P_{\alpha}(x+\mathfrak{i}\,,\zeta) \ge \nu(\partial T)$ for all $x$. When
$x \to \infty$, the Poisson kernel tends to $0$, so that the measure $\nu$ vanishes.
Thus, $h^{{\mathbb T}} \equiv 0$ and $h$ is a multiple of our Poisson kernel.

\smallskip

Next, we need that for $\xi \in \partial^*T$, the function 
$\mathfrak{z} = (z,w) \mapsto k^{{\mathbb T}}(w,\xi)$ is minimal harmonic.
This is completely analogous to the above, exchanging the
roles of ${\mathbb T}$ and ${\mathbb H}$.

\smallskip

When $\alpha = 1$, we know from Theorem \ref{thm:Liouville-HT} that the constant
function $\mathbf{1}$ is also minimal harmonic. 
When $\alpha \ne 1$, we still need to show that the two functions
$(z,w) \mapsto k^{{\mathbb T}}(w,\varpi)$ and $(z,w) \mapsto P_{\alpha}(z, \boldsymbol{\infty})$
are \emph{not} minimal harmonic.
These are the functions $\mathbf{1}$ and $(x+\mathfrak{i}\, y,w) \mapsto y^{1-\alpha}$.
The first one is not minimal, because there are non-constant
bounded harmonic functions. The second one is not minimal because
when $\alpha > 1$ it is already non-minimal on ${\mathbb H}$, while when
$\alpha < 1$ is is already non-minimal on ${\mathbb T}$.
\end{proof}

{\bf Outlook.} For a similar description of all minimal harmonic functions in the case when
$\bar \beta= \beta \mathsf{p} \ne 1$, one needs to study the boundary theory of ``sliced'' Laplacians
$\Delta^{{\mathbb H}}(\alpha,\bar\beta)$. A natural approach is to apply Theorem \ref{thm:restrict-HT}
to that case. In view of Remark \ref{rmk:Hb}, this means that one wants to determine
the minimal harmonic functions, or even the entire Martin compactification, for 
the random walk on $\operatorname{\sf Aff}({\mathbb H},\mathsf{q})$ with law $\widetilde\mu$. Since $\widetilde \mu$ has 
super-exponential moments
and continuous density with respect to Haar measure on that group, the guiding results are
those of {\sc Elie}~\cite{El1}, \cite{El2}. 
%
%
However, at a closer look, one finds that Elie's methods do not apply directly to our disconnected
subgroup of the affine group. A second possible approach is to clarify how the methods
of {\sc Ancona}~\cite{Anc} apply to $\Delta^{{\mathbb H}}(\alpha,\bar\beta)$ in the ``coercive'' case
$\bar\beta\,\mathsf{q}^{\alpha-1} \ne 1$, while in the case $\bar\beta\,\mathsf{q}^{\alpha-1} = 1$ one
might attempt to adapt the methods 
of {\sc Gou\"ezel}~\cite{Gou1}, \cite{Gou2} concerning Martin boundary of 
hyperbolic groups to our non-discrete
situation. Thus, additional work is required  -- this will be done separately.

\smallskip

{\bf Acknowledgement.} 
We thank Sara Brofferio for helpful interaction.

\end{document}